\newcommand*{\htarrow}{\lhook\joinrel\relbar\joinrel\twoheadrightarrow}
\newcommand*{\tarrow}{\relbar\joinrel\mid\joinrel\twoheadrightarrow}
\newcommand*{\harrow}{\lhook\joinrel\relbar\joinrel\mid\joinrel\rightarrow}
\newcommand*{\rarrow}{\relbar\joinrel\mid\joinrel\rightarrow}
\begin{document}
\newtheorem{prop-defi}[smfthm]{Proposition-DÈfinition}
\newtheorem{notas}[smfthm]{Notations}
\newtheorem{nota}[smfthm]{Notation}
\newtheorem{defis}[smfthm]{DÈfinitions}
\newtheorem{hypo}[smfthm]{HypothËse}

\def\Xm{{\mathbb X}}
\def\Um{{\mathbb U}}
\def\Am{{\mathbb A}}
\def\Fm{{\mathbb F}}
\def\Mm{{\mathbb M}}
\def\Nm{{\mathbb N}}
\def\Pm{{\mathbb P}}
\def\Qm{{\mathbb Q}}
\def\Zm{{\mathbb Z}}
\def\Dm{{\mathbb D}}
\def\Cm{{\mathbb C}}
\def\Rm{{\mathbb R}}
\def\Gm{{\mathbb G}}
\def\Lm{{\mathbb L}}
\def\Km{{\mathbb K}}
\def\Om{{\mathbb O}}
\def\Em{{\mathbb E}}

\def\RC{{\mathcal R}}
\def\BC{{\mathcal B}}
\def\QC{{\mathcal Q}}
\def\TC{{\mathcal T}}
\def\ZC{{\mathcal Z}}
\def\AC{{\mathcal A}}
\def\CC{{\mathcal C}}
\def\DC{{\mathcal D}}
\def\EC{{\mathcal E}}
\def\FC{{\mathcal F}}
\def\GC{{\mathcal G}}
\def\HC{{\mathcal H}}
\def\IC{{\mathcal I}}
\def\JC{{\mathcal J}}
\def\KC{{\mathcal K}}
\def\LC{{\mathcal L}}
\def\MC{{\mathcal M}}
\def\NC{{\mathcal N}}
\def\OC{{\mathcal O}}
\def\PC{{\mathcal P}}
\def\UC{{\mathcal U}}
\def\VC{{\mathcal V}}
\def\XC{{\mathcal X}}
\def\SC{{\mathcal S}}

\def\BF{{\mathfrak B}}
\def\AF{{\mathfrak A}}
\def\GF{{\mathfrak G}}
\def\EF{{\mathfrak E}}
\def\CF{{\mathfrak C}}
\def\DF{{\mathfrak D}}
\def\JF{{\mathfrak J}}
\def\LF{{\mathbb L}}
\def\LFF{{\mathscr L}}
\def\MF{{\mathfrak M}}
\def\NF{{\mathfrak N}}
\def\XF{{\mathfrak X}}
\def\UF{{\mathfrak U}}
\def\KF{{\mathfrak K}}
\def\FF{{\mathfrak F}}

\def \hi{\HC}

\def \longmapright#1{\smash{\mathop{\longrightarrow}\limits^{#1}}}
\def \mapright#1{\smash{\mathop{\rightarrow}\limits^{#1}}}
\def \lexp#1#2{\kern \scriptspace \vphantom{#2}^{#1}\kern-\scriptspace#2}
\def \linf#1#2{\kern \scriptspace \vphantom{#2}_{#1}\kern-\scriptspace#2}
\def \linexp#1#2#3 {\kern \scriptspace{#3}_{#1}^{#2} \kern-\scriptspace #3}

\def \Ext{\mathop{\mathrm{Ext}}\nolimits}
\def \ad{\mathop{\mathrm{ad}}\nolimits}
\def \sh{\mathop{\mathrm{Sh}}\nolimits}
\def \irr{\mathop{\mathrm{Irr}}\nolimits}
\def \FH{\mathop{\mathrm{FH}}\nolimits}
\def \FPH{\mathop{\mathrm{FPH}}\nolimits}
\def \coh{\mathop{\mathrm{Coh}}\nolimits}
\def \res{\mathop{\mathrm{res}}\nolimits}
\def \op{\mathop{\mathrm{op}}\nolimits}
\def \rec {\mathop{\mathrm{rec}}\nolimits}
\def \art{\mathop{\mathrm{Art}}\nolimits}
\def \hyp {\mathop{\mathrm{Hyp}}\nolimits}
\def \cusp {\mathop{\mathrm{Cusp}}\nolimits}
\def \scusp {\mathop{\mathrm{Scusp}}\nolimits}
\def \Iw {\mathop{\mathrm{Iw}}\nolimits}
\def \JL {\mathop{\mathrm{JL}}\nolimits}
\def \speh {\mathop{\mathrm{Speh}}\nolimits}
\def \isom {\mathop{\mathrm{Isom}}\nolimits}
\def \Vect {\mathop{\mathrm{Vect}}\nolimits}
\def \groth {\mathop{\mathrm{Groth}}\nolimits}
\def \hom {\mathop{\mathrm{Hom}}\nolimits}
\def \deg {\mathop{\mathrm{deg}}\nolimits}
\def \val {\mathop{\mathrm{val}}\nolimits}
\def \det {\mathop{\mathrm{det}}\nolimits}
\def \rep {\mathop{\mathrm{Rep}}\nolimits}
\def \spec {\mathop{\mathrm{Spec}}\nolimits}
\def \fr {\mathop{\mathrm{Fr}}\nolimits}
\def \frob {\mathop{\mathrm{Frob}}\nolimits}
\def \ker {\mathop{\mathrm{Ker}}\nolimits}
\def \im {\mathop{\mathrm{Im}}\nolimits}
\def \Red {\mathop{\mathrm{Red}}\nolimits}
\def \red {\mathop{\mathrm{red}}\nolimits}
\def \aut {\mathop{\mathrm{Aut}}\nolimits}
\def \diag {\mathop{\mathrm{diag}}\nolimits}
\def \spf {\mathop{\mathrm{Spf}}\nolimits}
\def \Def {\mathop{\mathrm{Def}}\nolimits}
\def \twist {\mathop{\mathrm{Twist}}\nolimits}
\def \supp {\mathop{\mathrm{Supp}}\nolimits}
\def \Id {{\mathop{\mathrm{Id}}\nolimits}}
\def \lie {{\mathop{\mathrm{Lie}}\nolimits}}
\def \Ind{\mathop{\mathrm{Ind}}\nolimits}
\def \ind {\mathop{\mathrm{ind}}\nolimits}
\def \loc {\mathop{\mathrm{Loc}}\nolimits}
\def \top {\mathop{\mathrm{Top}}\nolimits}
\def \ker {\mathop{\mathrm{Ker}}\nolimits}
\def \coker {\mathop{\mathrm{Coker}}\nolimits}
\def \gal {{\mathop{\mathrm{Gal}}\nolimits}}
\def \Nr {{\mathop{\mathrm{Nr}}\nolimits}}
\def \rn {{\mathop{\mathrm{rn}}\nolimits}}
\def \tr {{\mathop{\mathrm{Tr~}}\nolimits}}
\def \Sp {{\mathop{\mathrm{Sp}}\nolimits}}
\def \st {{\mathop{\mathrm{St}}\nolimits}}
\def \sp{{\mathop{\mathrm{Sp}}\nolimits}}
\def \perv{\mathop{\mathrm{Perv}}\nolimits}
\def \tor {{\mathop{\mathrm{Tor}}\nolimits}}
\def \nrd {{\mathop{\mathrm{Nrd}}\nolimits}}
\def \nilp {{\mathop{\mathrm{Nilp}}\nolimits}}
\def \obj {{\mathop{\mathrm{Obj}}\nolimits}}
\def \cl {{\mathop{\mathrm{cl}}\nolimits}}
\def \gr {{\mathop{\mathrm{gr}}\nolimits}}
\def \grr {{\mathop{\mathrm{grr}}\nolimits}}
\def \coim {{\mathop{\mathrm{Coim}}\nolimits}}
\def \can {{\mathop{\mathrm{can}}\nolimits}}
\def \car {{\mathop{\mathrm{N(1)}}\nolimits}}
\def \unip {{\mathop{\mathrm{unip}}\nolimits}}
\def \alg {{\mathop{\mathrm{Alg}}\nolimits}}

\def \rem{{\noindent\textit{Remarque:~}}}
\def \ext {{\mathop{\mathrm{Ext}}\nolimits}}
\def \End {{\mathop{\mathrm{End}}\nolimits}}

\def\semi{\mathrel{>\!\!\!\triangleleft}}
\let \DS=\displaystyle



\setcounter{secnumdepth}{3} \setcounter{tocdepth}{3}

\def \Fil{\mathop{\mathrm{Fil}}\nolimits}
\def \CoFil{\mathop{\mathrm{CoFil}}\nolimits}
\def \Fill{\mathop{\mathrm{Fill}}\nolimits}
\def \CoFill{\mathop{\mathrm{CoFill}}\nolimits}
\def\SF{{\mathfrak S}}
\def\PF{{\mathfrak P}}
\def \EFil{\mathop{\mathrm{EFil}}\nolimits}
\def \ECoFil{\mathop{\mathrm{ECoFil}}\nolimits}
\def \EFill{\mathop{\mathrm{EFill}}\nolimits}
\def \FP{\mathop{\mathrm{FP}}\nolimits}

\let \longto=\longrightarrow
\let \oo=\infty

\let \d=\delta
\let \k=\kappa

\newcommand{\marque}{\addtocounter{smfthm}{1}
{\smallskip \noindent \textit{\thesmfthm}~---~}}

\renewcommand\atop[2]{\ensuremath{\genfrac..{0pt}{1}{#1}{#2}}}

\title[La cohomologie des espaces de Lubin-Tate est libre]
{La cohomologie des espaces de Lubin-Tate est libre}

\alttitle{The cohomology of Lubin-Tate spaces is free}

\author{Pascal Boyer}
\address{UniversitÈ Paris 13, Sorbonne Paris CitÈ \\
LAGA, CNRS, UMR 7539\\ 
F-93430, Villetaneuse (France) \\
PerCoLaTor: ANR-14-CE25}
\email{boyer@math.univ-paris13.fr}
\thanks{L'auteur remercie l'ANR pour son soutien dans le cadre du projet PerCoLaTor 14-CE25.}

\frontmatter

\begin{abstract}
Le rÈsultat principal de ce travail est l'absence de torsion dans la $ \overline \Zm_{l}$-cohomologie 
de la tour de Lubin-Tate. Comme dans \cite{boyer-invent2}, la stratÈgie est globale et repose sur l'Ètude 
de la filtration de stratification du faisceau pervers des cycles Èvanescents de certaines variÈtÈs de 
Shimura de type Kottwitz-Harris-Taylor, dont les graduÈs se dÈcrivent comme une extension intermÈdiaire
de systËmes locaux construits dans \cite{h-t}. Le point crucial consiste ‡
dÈcrire la diffÈrence entre ces extensions pour les deux $t$-structures usuelles $p$ et $p+$. 
Pour ce faire, on utilise la thÈorie des dÈrivÈes pour les reprÈsentations du 
groupe mirabolique d'aprËs \cite{zelevinski1}.
\end{abstract}

\begin{altabstract}
The principal result of this work is the freeness in the $ \overline \Zm_l$-cohomology of the Lubin-Tate 
tower. As in \cite{boyer-invent2}, the strategy is of global nature and relies on studying the filtration of
stratification of the perverse sheaf of vanishing cycles of some Shimura varieties of Kottwitz-Harris-Taylor 
types, whose graduates can be explicited as some intermediate extension of some local system
constructed in \cite{h-t}. The crucial point relies on the study of the difference between such extension
for the two classical $t$-structures $p$ and $p+$. The main ingredients use the theory of derivative
for representations of the mirabolic group in \cite{zelevinski1}.
\end{altabstract}

\subjclass{14L05, 11F80, 11F55, 11F70, 11G10, 11G18}

\keywords{VariÈtÈs de Shimura, modules formels, correspondances de Langlands, correspondances de
Jacquet-Langlands, faisceaux pervers, cycles Èvanescents, filtration de monodromie, stratification, 
catÈgories quasi-abÈliennes, thÈorie de torsion}

\altkeywords{Shimura varieties, formal modules, Langlands correspondences, Jacquet-Langlands
correspondences, monodromy filtration, perverse sheaves, vanishing cycles, stratification, quasi-abelian category, torsion theory}

\maketitle

\pagestyle{headings} \pagenumbering{arabic}

\section*{Introduction}
\renewcommand{\theequation}{\arabic{equation}}
\backmatter

Dans \cite{boyer-invent2}, nous
avons explicitÈ les groupes de cohomologie ‡ coefficients dans $\overline \Qm_l$ 
de la tour de Lubin-Tate en Ètudiant le faisceau pervers $\Psi_{\IC,\overline \Qm_l}$ 
des cycles Èvanescents d'une tour de variÈtÈs de Shimura de Kottwitz-Harris-Taylor $X_{\IC}$, 
cf. \cite{h-t},
en une place $v$ de son corps reflex $F$. Le passage
du global vers le local est fourni par un analogue du thÈorËme de Serre-Tate couplÈ
au thÈorËme de comparaison de Berkovich. Le but de ce travail est de donner une 
$\overline \Zm_l$-version de ces rÈsultats en Èlucidant \emph{du cÙtÈ global} la structure 
du faisceau pervers $\Psi_{\IC}$ sur la fibre spÈciale gÈomÈtrique $X_{\IC,\bar s}$ en $v$
et en prouvant, \emph{du cÙtÈ local} cf. le thÈorËme \ref{theo-LT}, 
l'absence de torsion dans la cohomologie
des espaces de Lubin-Tate.

Sur $\overline \Qm_l$, pour Ètudier $\Psi_{\IC,\overline \Qm_l}$ , nous l'avons dÈvissÈ en constituants 
simples $\PC_{\overline \Qm_l}(\pi_v,t)$ dÈcrits comme
extensions intermÈdiaires de systËmes locaux $HT_{\overline \Qm_l}(\pi_v,\st_t(\pi_v))$ 
construits dans \cite{h-t}. L'action de l'opÈrateur de monodromie est en particulier explicite et 
la filtration de $\Psi_{\IC,\overline \Qm_l}$ par les noyaux itÈrÈs de la monodromie donne alors 
une suite spectrale 
$$E_1^{i,j}=\hi^{i+j} \gr^K_{-j}(\Psi_{\IC,\overline \Qm_l}) \Rightarrow \hi^{i+j} \Psi_{\IC,\overline \Qm_l}$$
calculant les faisceaux de cohomologie de $\Psi_{\IC,\overline \Qm_l}$ ‡ partir de ceux de ses graduÈs.

\vspace{.2cm}

\noindent \textbf{Fait 1} \rule[.1cm]{.6cm}{.2pt}
\textit{(cf. \cite{boyer-invent2} \S 5.8) La suite spectrale $E_1^{i,j}$ dÈgÈnËre en $E_1$, i.e.
pour tout $n$, le faisceau de cohomologie $\hi^n \Psi_{\IC,\overline \Qm_l}$ 
admet une filtration dont les graduÈs sont les $\hi^n \gr_k^K(\Psi_{\IC,\overline \Qm_l})$.}

\vspace{.2cm}

Cette observation nous fournit \textit{une stratÈgie}
pour montrer que les $\hi^n \Psi_{\IC,\overline \Zm_l}$ sont sans torsion puisqu'il suffit de construire
une version entiËre, i.e. ‡ coefficients dans $\overline \Zm_l$, de la filtration de 
$\Psi_{\IC,\overline \Qm_l}$  par les noyaux itÈrÈs de la monodromie, 
puis de montrer que les $\hi^n \gr^K_k(\Psi_{\IC,\overline \Zm_l})$ sont sans torsion.

Une $\overline \Zm_l$-filtration de $\Psi_{\IC,\overline \Zm_l}$ par les noyaux itÈrÈs de la monodromie 
est donnÈe dans \cite{boyer-torsion} en utilisant des morphismes
d'adjonction $j^{=h}_!j^{=h,*} \rightarrow \Id$ associÈs ‡ la stratification de Newton 
$j^{=h}:X^{=h}_{\IC,\bar s} \hookrightarrow X_{\IC,\bar s}$ de la fibre spÈciale gÈomÈtrique
$X_{\IC,\bar s}$ de $X_\IC$ ‡ la place $v$, cf. le \S \ref{para-rappel-filtration}. 
Cette stratÈgie soulËve essentiellement deux problËmes:
\begin{itemize}
\item le premier issu de la nature mÍme de la construction des filtrations de stratification, 
qui requiËre de saturer, cf. la dÈfinition \ref{defi-saturee},
les morphismes d'adjonction de sorte que l'on ne contrÙle plus les rÈseaux des systËmes
locaux qui apparaissent et plus sÈrieusement encore, les faisceaux pervers 
d'Harris-Taylor $\PC(\pi_v,t)$ 
qui interviennent sont seulement connus ‡ bimorphisme prËs, i.e. avec la notation \ref{nota-htarrow1}
$$\lexp p j^{=tg}_{!*} HT(\pi_v,\st_t(\pi_v)) \htarrow_+ \PC(\pi_v,t) \htarrow_+ \lexp {p+} j^{=tg}_{!*}
HT(\pi_v,\st_t(\pi_v)).$$
\rem Nous verrons, cf. la proposition \ref{prop-dec-pervers},
qu'en gÈnÈral ces deux extensions intermÈdiaires $p$ et $p+$, ÈchangÈes par
la dualitÈ de Grothendieck-Verdier, sont distinctes et qu'identifier $\PC(\pi_v,t)$ dÈpend
fortement de comment on filtre $\Psi_\IC$: rappelons que $\Psi_\IC$ est autodual, de sorte que si une 
filtration fournit pour graduÈs les $p$-extensions intermÈdiaires, une filtration \og duale \fg{} fournira
les $p+$ versions.

\item Le deuxiËme concerne le fait que pour calculer les faisceaux
de cohomologie des $\gr^K_k(\Psi_{\IC,\overline \Zm_l})$, on doit utiliser une suite spectrale
qui dÈgÈnËre en $E_2$ mais pas en $E_1$, i.e. certaines des flËches $d_1^{p,q}$ sont non nulles
sur $\overline \Qm_l$ de sorte que lorsqu'on va les considÈrer sur $\overline \Zm_l$, on devrait
non seulement contrÙler les rÈseaux, ce qui semble difficile ‡ cause de la saturation ÈvoquÈe
plus haut, mais aussi les dÈterminer ‡ homothÈtie prËs, ce qui semble totalement hors de portÈe
de part la nature mÍme de la construction des suites spectrales.
\end{itemize}

En ce qui concerne ce dernier point, il existe une explication gÈomÈtrique qui repose sur le fait
que les strates de Newton sont \og gÈomÈtriquement \fg{} induites paraboliques, cf. la proposition
\ref{prop-strate-induite}
$$X^{=h}_{\IC,\bar s}=X^{=h}_{\IC,\bar s,\overline{1_h}} \times_{P_{h,d-h}(F_v)} GL_d(F_v),$$ 
de sorte que lorsqu'on considËre toutes les
strates, on obtient naturellement, lorsqu'on calcule des faisceaux de cohomologie,
des reprÈsentations induites paraboliques lesquelles en gÈnÈral ne sont pas irrÈductibles.
Or les faisceaux de cohomologie de $\Psi_\IC$ ne font pas apparaitre d'induites paraboliques, ce qui
explique que dans \cite{boyer-invent2}, apparaissent des suites spectrales non triviales, i.e. qui
ne dÈgÈnËrent qu'en $E_2$ et non en $E_1$. 

Or il est bien connu que la thÈorie des reprÈsentations de $GL_d(F_v)$ repose, via la thÈorie des
dÈrivÈes, sur celle beaucoup
plus simple du groupe mirabolique $M_d(F_v)$ obtenu comme  le sous-groupe du parabolique 
standard $P_{1,d-1}(F_v)$ dont\footnote{Dans \cite{zelevinski1} le sous-groupe mirabolique est dÈfini
en imposant la derniËre colonne Ègale ‡ $(0,\cdots,0,1)$. On passe alors de cette dÈfinition ‡ la notre
via $g \mapsto \sigma (\lexp t g^{-1}) \sigma^{-1}$ o˘ $\sigma$ est la matrice de permutation
associÈe au cycle $(1~2~\cdots ~d)$.}
le premier coefficient en haut ‡ gauche est Ègal ‡ $1$. Ainsi
pour $\pi$ et $\pi'$ des reprÈsentations de respectivement $GL_h(F_v)$ et $GL_{d-h}(F_v)$,
la restriction au groupe mirabolique de l'induite parabolique $\pi \times \pi'$ se dÈcrit via la
suite exacte courte
$$0 \rightarrow \pi \times \pi'_{|M_{d-h}(F_v)} \longrightarrow (\pi \times \pi')_{|M_d(F_v)} \longrightarrow
\pi_{|M_h(F_v)} \times \pi' \rightarrow 0$$
o˘ le premier terme est l'induite parabolique relativement ‡ 
$\left ( \begin{array}{ccc} 1 & 0 & V_{d-h-1} \\ 0 & GL_{h} & U \\ 0 & 0 & GL_{d-h-1} \end{array} \right )$,
et le dernier est l'induite ‡ support compact relativement ‡
$\left ( \begin{array}{cc} M_{h} & U \\ 0 & GL_{d-h} \end{array} \right )$.

\vspace{.2cm}

\noindent \textbf{Fait 2} \rule[.1cm]{.6cm}{.2pt}
\textit{Il existe une version gÈomÈtrique de la suite exacte courte prÈcÈdente, cf. \cite{boyer-FT}, qui
permet de calculer directement les faisceaux de cohomologie de $\Psi_{\IC}$, i.e. sans avoir recours
‡ une suite spectrale, cf. aussi l'appendice B de ce texte.
}

\vspace{.2cm}

L'idÈe consiste ‡ considÈrer pour une strate $X^{=1}_{\IC,\bar s,c}$, 
$$\bar j_{\neq c}:\overline X_{\IC} - X^{\geq 1}_{\IC,\bar s,c} \hookrightarrow \overline X_{\IC}
:=X_{\IC} \times_{\spec \OC_v} \spec \overline \OC_v,$$
et, plus gÈnÈralement, pour tout $h \geq 1$ et toute strate \og pure \fg{} $X^{=h+1}_{\IC,\bar s,c}$
$$j^{=h}_{\neq c}:X^{\geq h}_{\IC,\bar s,\overline{1_h}} \setminus
X^{\geq h+1}_{\IC,\bar s,c} \hookrightarrow X^{\geq h}_{\IC,\bar s,\overline{1_h}}
\hookrightarrow X_{\IC,\bar s}.$$
Le point essentiel est que $\bar j_{\neq c}$ et $j^{=h}_{\neq c}$ sont affines, cf. le lemme \ref{lem-HT1}
et la suite exacte courte (\ref{eq-sec-psi}). 

La thÈorie classique des dÈrivÈes pour les reprÈsentations
du groupe mirabolique, est utilisÈe pour montrer que des extensions scindÈes entre 
$\overline \Qm_l$-faisceaux pervers d'Harris-Taylor, restent encore scindÈes sur $\overline \Zm_l$
pourvu qu'on filtre $\Psi_\IC$ dans \og le bon ordre \fg.
D'un point de vue global, on montre alors que la filtration de stratification \emph{exhaustive} de 
$\Psi_{\IC}$ dÈfinie dans \cite{boyer-torsion} ‡ l'aide des seuls morphismes d'adjonction
$j_!j^* \rightarrow \Id$ (resp. $\Id \rightarrow j_*j^*$) admet pour graduÈs les extensions
intermÈdiaires pour la $t$-structure $p$ (resp. $p+$) des systËmes locaux d'Harris-Taylor.
L'ÈnoncÈ principal qui permet de montrer cette propriÈtÈ repose sur la description du conoyau du 
bimorphisme naturel entre la $p$ et $p+$
extension intermÈdiaire d'un systËme local d'Harris-Taylor. Rappelons qu'un tel systËme local
sur la strate de Newton $X^{=h}_{\IC,\bar s}$ est associÈ ‡ une cuspidale $\pi_v$ 
de $GL_g(F_v)$ avec $g|h=tg$, et on montre, cf. la proposition \ref{prop-ext-pp}, que ce conoyau
est nul si, et seulement si, la rÈduction modulo $l$ de $\pi_v$ reste
supercuspidale. Sinon on dÈcrit, cf. la proposition \ref{prop-dec-pervers},
la $l$-torsion de ce conoyau ‡ l'aide de la rÈduction
modulo $l$ de la reprÈsentation de Steinberg $\st_t(\pi_v)$ donnÈe dans \cite{boyer-repmodl}.

En ce qui concerne l'organisation du papier,
les notations sur les reprÈsentations sont donnÈes dans l'appendice A. Les rÈsultats des lemmes
et propositions de cet article concernant les $\overline \Zm_l$-faisceaux pervers libres sont, 
d'aprËs \cite{boyer-invent2} et \cite{boyer-torsion}, dÈj‡ connus sur $\overline \Qm_l$: ceux dont
nous avons besoin, sont rappelÈs en appendice B avec quelques complÈments autour de 
l'effet des morphismes $j^{=h}_{\neq c,!} j^{=h,*}_{\neq c}$, ingrÈdient clef de cet article.

Mentionnons enfin la thËse de H. Wang qui, par voie purement locale en se ramenant
aux travaux de BonnafÈ et Rouquier \cite{B-R} sur la cohomologie des variÈtÈs de 
Deligne-Lusztig,
montre la libertÈ de l'Ètage modÈrÈ de la cohomologie de la tour de Drinfeld. En utilisant
le thÈorËme de Faltings-Fargues, son rÈsultat correspond au cas des reprÈsentations
cuspidales de niveau zÈro.

\tableofcontents

\mainmatter

\renewcommand{\theequation}{\arabic{section}.\arabic{subsection}.\arabic{smfthm}}

\section{Rappels gÈomÈtriques}

Dans tout ce texte, les lettres $p \neq l$ dÈsigneront deux nombres premiers distincts, 
$d$ un entier strictement positif et $\Lambda$ au choix 
une extension algÈbrique de $\Qm_l$ comme par exemple $\overline \Qm_l$,
l'anneau des entiers d'une telle extension, comme par exemple $\Zm_l^{nr}$ ou
une extension algÈbrique de $\Fm_l$, comme par exemple $\overline \Fm_l$.

\subsection{Espaces de Lubin-Tate}
\label{para-LT}

On dÈsignera par $K$ une extension finie de $\Qm_p$, $\OC_K$ son anneau 
des entiers, d'idÈal maximal $\PC_K$, $\varpi_K$ une uniformisante  et
$\kappa=\OC_K/\PC_K$ son corps rÈsiduel de cardinal $q=p^f$. L'extension maximale non 
ramifiÈe de $K$ sera notÈe $K^{nr}$ de complÈtÈ $\hat K^{nr}$, d'anneau des entiers respectif 
$\OC_{K^{nr}}$ et $\OC_{\hat K^{nr}}$.
Soit $\Sigma_{K,d}$ le $\OC_K$-module de Barsotti-Tate formel sur
$\overline \k$ de hauteur $d$, cf. \cite{h-t} \S II. On
considËre la catÈgorie $C$ des $\OC_K$-algËbres locales artiniennes
de corps rÈsiduel $\overline \k$.

\begin{defi}
Le foncteur qui ‡ un objet $R$ de $C$
associe l'ensemble des classes d'isomorphismes des dÈformations par quasi-isogÈnies sur $R$ de $\Sigma_{K,d}$
munies d'une structure de niveau $n$ est pro-reprÈsentable par un schÈma formel
$\widehat{\MC_{LT,d,n}}=\coprod_{h \in \Zm} \widehat{\MC}_{LT,d,n}^{(h)}$ o˘
$\widehat{\MC}_{LT,d,n}^{(h)}$ reprÈsente le sous-foncteur pour des dÈformations
par des quasi-isogÈnies de hauteur $h$.
\end{defi}

\rem chacun des $\widehat{\MC}_{LT,d,n}^{(h)}$ est non-canoniquement isomorphe au schÈma formel $\widehat{\MC}_{LT,d,n}^{(0)}$
notÈ $\spf \Def_{d,n}$ dans \cite{boyer-invent2}.
On notera sans chapeau les fibres gÈnÈriques de Berkovich de ces espaces; ce sont donc des $\widehat{K^{nr}}$-espaces analytiques au sens de \cite{berk0} et on note
$\MC_{LT,n}^{d/K}:=\MC_{LT,d,n} \hat \otimes_{\hat K^{nr}} \hat{\overline K}.$

Le groupe des quasi-isogÈnies de $\Sigma_{K,d}$  s'identifie au groupe $D_{K,d}^\times$ des unitÈs de l'algËbre ‡ division 
centrale sur $K$ d'invariant $1/d$, lequel, par dÈfinition, agit sur $\MC_{LT,n}^{d/K}$.
Pour tout $n \geq 1$, on a une action naturelle de $GL_d(\OC_K/\PC_K^n)$ sur les structures de niveau et donc sur
$\MC_{LT,n}^{d/K}$; cette action se prolonge en une action de $GL_d(K)$ sur la limite projective
${\displaystyle \lim_{\leftarrow ~n}} \MC_{LT,n}^{d/K}$. Sur cette limite projective on dispose ainsi d'une action de 
$GL_d(K) \times D_{K,d}^\times$ qui se factorise par 
$\Bigl ( GL_d(K) \times D_{K,d}^\times \Bigr ) / K^\times$ o˘ $K^\times$ est plongÈ diagonalement.

\begin{defi} 
Soit
$\Psi_{K,\Lambda,d,n}^{i}\simeq H^i(\MC_{LT,d,n}^{(0)} \hat \otimes_{\hat K^{nr}} \hat{\overline K},
\Lambda)$ le $\Lambda$-module de type fini
associÈ, via la thÈorie des cycles Èvanescents de Berkovich, au morphisme structural 
$\widehat{\MC_{LT,d,n}^{(0)}} \longto \spf \hat \OC_K^{nr}.$ 
\end{defi}

On notera aussi $\UC_{K,\Lambda,d,n}^i := H^i(\MC_{LT,n}^{d/K},\Lambda)$ et on pose 
$\UC_{K,\Lambda,d}^{i}= {\DS \lim_{\atop{\longto}{n}}} ~ \UC_{K,\Lambda,d,n}^{i}$
de sorte que $\KF_{n}:=\ker (GL_d(\OC_K) \longto GL_d(\OC_K/\PC_K^n))$ Ètant pro-$p$ 
pour tout $n \geq 1$, on a
$\UC_{K,\Lambda,d,n}^{i}=(\UC_{K,\Lambda,d}^{i})^{\KF_{n}}$.

\begin{nota} \label{nota-Utau}
ConsidÈrant $\UC_{K,\overline \Zm_l,d,n}^i$ comme une reprÈsentation de $D_{K,d}^\times$, 
d'aprËs \ref{prop-scindage}, pour $\bar \tau \in \RC_{ \overline \Fm_l}(d)$, on note
$\UC^i_{\bar \tau,n}:=\UC_{K,\overline \Zm_l,d,n,\bar \tau}^i,$ puis
$\UC^i_{\bar \tau,\Nm}=\lim_{\atop{\rightarrow}{n}} ~\UC^i_{\bar \tau,n}.$
On notera aussi $\UC^i_{\bar \tau,\Nm,free}$ le quotient libre de $\UC^i_{\bar \tau,\Nm}$.
\end{nota}


La description de la $\overline \Qm_l$-cohomologie de ces espaces de Lubin-Tate 
est donnÈe dans \cite{boyer-invent2} thÈorËme 2.3.5. Le rÈsultat principal que nous avons en 
vue est le suivant.

\begin{theo} \label{theo-LT} \phantomsection
Pour tout $h \geq 1$ et pour tout $0 \leq i \leq h-1$, le $\overline \Zm_l$-module
$\UC_{K,\overline \Zm_l,h}^i$ est sans torsion.
\end{theo}

\subsection{GÈomÈtrie de quelques variÈtÈs de Shimura unitaires simples}
\label{para-shimura}

Soient $E/\Qm$ une extension quadratique imaginaire, $F^+/\Qm$ une extension
totalement rÈelle dont on fixe un plongement rÈel $\tau:F^+ \hookrightarrow \Rm$;
on pose $F=F^+ E$ le corps CM associÈ. 

\begin{nota} Pour toute place finie $w$ de $F$, on note $F_w$ le complÈtÈ de $F$ en cette place,
$\OC_w$ son anneau des entiers d'idÈal maximal $\PC_w$ et de corps rÈsiduel $\kappa(w)$.
On notera aussi $\overline \OC_w$ l'anneau des entiers d'une clÙture algÈbrique $\overline F_w$
de $F_w$.
\end{nota}

Soit $B$ une algËbre ‡ 
division centrale sur $F$ de dimension $d^2$ telle qu'en toute place $x$ de $F$,
$B_x$ est soit dÈcomposÈe soit une algËbre ‡ division et on suppose $B$ 
munie d'une involution de
seconde espËce $*$ telle que $*_{|F}$ est la conjugaison complexe $c$. Pour
$\beta \in B^{*=-1}$, on note $\sharp_\beta$ l'involution $x \mapsto x^{\sharp_\beta}=\beta x^*
\beta^{-1}$ et $G/\Qm$ le groupe de similitudes, notÈ $G_\tau$ dans \cite{h-t}, dÈfini
pour toute $\Qm$-algËbre $R$ par 
$$
G(R)  \simeq   \{ (\lambda,g) \in R^\times \times (B^{op} \otimes_\Qm R)^\times  \hbox{ tel que } 
gg^{\sharp_\beta}=\lambda \}
$$
avec $B^{op}=B \otimes_{F,c} F$. 
Si $x$ est une place de $\Qm$ dÈcomposÈe $x=yy^c$ dans $E$ alors 
$$G(\Qm_x) \simeq (B_y^{op})^\times \times \Qm_x^\times \simeq \Qm_x^\times \times
\prod_{z_i} (B_{z_i}^{op})^\times,$$
o˘ $x=\prod_i z_i$ dans $F^+$.

Dans \cite{h-t}, les auteurs justifient l'existence d'un $G$ comme ci-dessus tel qu'en outre:
\begin{itemize}
\item si $x$ est une place de $\Qm$ qui n'est pas dÈcomposÈe dans $E$ alors
$G(\Qm_x)$ est quasi-dÈployÈ;

\item les invariants de $G(\Rm)$ sont $(1,d-1)$ pour le plongement $\tau$ et $(0,d)$ pour les
autres. 
\end{itemize}

\begin{nota} On suppose que 
\begin{itemize}
\item $p$ est dÈcomposÈe
$p=uu^c$ dans $E$ et on note $v_1,v_2,\cdots,v_r$, les places de $F$ au dessus de $u$,

\item et qu'il existe au moins une de ces places, mettons $v_1$ que l'on notera $v$, telle que 
$(B_v^{op})^\times \simeq GL_d(F_v)$.
\end{itemize}
\end{nota}

Pour tout sous-groupe compact $U^p$ de $G(\Am^{\oo,p})$ et $m=(m_1,\cdots,m_r) \in \Zm_{\geq 0}^r$, on pose
$$U^p(m)=U^p \times \Zm_p^\times \times \prod_{i=1}^r \ker ( \OC_{B_{v_i}}^\times \longto
(\OC_{B_{v_i}}/\PC_{v_i}^{m_i})^\times )$$

\begin{nota} \label{nota-m1}
On note $\IC$ l'ensemble des sous-groupes compacts ouverts $U^p(m)$ tels qu'il existe une place $x$ pour laquelle la projection de $U^p$ sur $G(\Qm_x)$ ne contienne
aucun ÈlÈment d'ordre fini autre que l'identitÈ, cf. \cite{h-t} bas de la page 90.
Pour $m$ comme ci-dessus, on a une application
$m_1: \IC \longrightarrow \Nm.$
\end{nota}

\begin{defi}
Pour tout $I \in \IC$, on note $X_I \rightarrow \spec \OC_v$ \og la variÈtÈ de Shimura
associÈe ‡ $G$\fg{} construite dans \cite{h-t} et $X_\IC=(X_I)_{I \in \IC}$
le schÈma de Hecke relativement au groupe  $G(\Am^\oo)$, au sens de \cite{boyer-invent2}
\end{defi}

\rem les morphismes de restriction du niveau $r_{J,I}:X_J \rightarrow X_I$ sont finis et plats.
et mÍme Ètales quand $m_1(J)=m_1(I)$.

\begin{notas} (cf. \cite{boyer-invent2} \S 1.3) \label{nota-strate}
Pour $I \in \IC$, on note:
\begin{itemize}
\item $X_{I,s}$ la fibre spÈciale de $X_I$ et $X_{I,\bar s}:=X_{I,s} \times \spec \overline \Fm_p$ 
la fibre spÈciale gÈomÈtrique.

\item Pour tout $1 \leq h \leq d$, $X_{I,\bar s}^{\geq h}$ (resp. $X_{I,\bar s}^{=h}$)
dÈsigne la strate fermÈe (resp. ouverte) de Newton de hauteur $h$, i.e. le sous-schÈma dont la
partie connexe du groupe de Barsotti-Tate en chacun de ses points gÈomÈtriques
est de rang $\geq h$ (resp. Ègal ‡ $h$).

\item On notera aussi $X^{\geq 0}_{I,\bar s}:=X_I$.
\end{itemize}
\end{notas}

\rem pour tout $1 \leq h \leq d$, la strate de Newton de hauteur $h$ est
de pure dimension $d-h$; le systËme projectif associÈ dÈfinit alors
un schÈma de Hecke $X_{\IC,\bar s}^{\geq h}$ (resp. $X_{\IC,\bar s}^{=h}$)
pour $\Gm=G(\Am^\oo)$, cf. \cite{h-t} III.4.4, lisse 
dans le cas de bonne rÈduction, i.e. quand $m_1=0$.

\begin{prop} (cf. \cite{h-t} p.116) \label{prop-strate-induite}
Pour tout $1 \leq h < d$, les strates $X_{I,\bar s}^{=h}$ sont gÈomÈtriquement induites sous
l'action du parabolique $P_{h,d-h}(\OC_v)$ \footnote{cf. l'appendice \ref{para-rep} pour les notations} au sens o˘
il existe un sous-schÈma fermÈ $X_{I,\bar s,\overline{1_h}}^{=h}$ tel que:
$$X_{I,\bar s}^{=h} \simeq X_{I,\bar s,\overline{1_h}}^{=h} 
\times_{P_{h,d-h}(\OC_v/\PC_v^{m_1})} GL_d(\OC_v/\PC_v^{m_1}).$$
Pour $h=0$, on ne dispose que d'une unique strate et $X^{\geq 0}_{\IC,\bar s,\overline{1_0}}$
dÈsignera encore $X_\IC$. 
\end{prop}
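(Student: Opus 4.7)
The strategy is standard in the Harris-Taylor setup: use the connected-�tale sequence of the universal Barsotti-Tate group on each stratum to reduce a full level-$m_1$ structure at $v$ to one that respects the filtration; the residual ambiguity is then precisely the flag variety $GL_d/P_{h,d-h}$.

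The first step is geometric. Over the stratum $X_{I,\bar s}^{=h}$, which has constant Newton polygon by definition, the universal $\OC_v$-divisible module $\GC_v$ admits a canonical short exact sequence
$$0 \longto \GC_v^0 \longto \GC_v \longto \GC_v^{\mathrm{et}} \longto 0,$$
where $\GC_v^0$ is a connected Barsotti-Tate group of height $h$ and $\GC_v^{\mathrm{et}}$ is �tale of rank $d-h$. The key point, where the open-stratum hypothesis $X^{=h}$ enters (rather than merely $X^{\geq h}$), is that constancy of the Newton polygon on the stratum ensures this sequence exists in families and not only at geometric points; this is exactly what is done on page 116 of \cite{h-t}, using Grothendieck-Messing type deformation theory.

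The second step refines the level structure. A level-$m_1$ structure at $v$ over $X_I$ amounts to an $\OC_v$-linear isomorphism $\eta : (\OC_v/\PC_v^{m_1})^d \xrightarrow{\sim} \GC_v[\PC_v^{m_1}]$. I would define $X_{I,\bar s,\overline{1_h}}^{=h}$ as the closed subscheme of $X_{I,\bar s}^{=h}$ cut out by imposing that $\eta$ be filtration-compatible, i.e.\ that it send the standard sub-module $(\OC_v/\PC_v^{m_1})^h \oplus 0 \subset (\OC_v/\PC_v^{m_1})^d$ into $\GC_v^0[\PC_v^{m_1}]$. By construction the stabilizer of this condition inside $GL_d(\OC_v/\PC_v^{m_1})$ is exactly the standard parabolic $P_{h,d-h}(\OC_v/\PC_v^{m_1})$.

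Finally, if $I_0$ denotes the level obtained from $I$ by removing the level-$m_1$ part at $v$, then the forgetful map $X_{I,\bar s}^{=h} \longto X_{I_0,\bar s}^{=h}$ is a $GL_d(\OC_v/\PC_v^{m_1})$-torsor, and by the previous step the restriction $X_{I,\bar s,\overline{1_h}}^{=h} \longto X_{I_0,\bar s}^{=h}$ is a sub-torsor under $P_{h,d-h}(\OC_v/\PC_v^{m_1})$. The standard torsor-induction formalism then yields
$$X_{I,\bar s}^{=h} \simeq X_{I,\bar s,\overline{1_h}}^{=h} \times_{P_{h,d-h}(\OC_v/\PC_v^{m_1})} GL_d(\OC_v/\PC_v^{m_1}).$$
The main obstacle lies in the first step: verifying that the pointwise connected-�tale filtration of $\GC_v$ extends to a filtration by honest sub-Barsotti-Tate groups on the entire stratum, so that the flag condition imposed on $\eta$ is cut out by a genuine closed subscheme rather than only by a pointwise condition. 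Once this is secured, the second and third steps are essentially formal.
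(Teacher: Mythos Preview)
Your approach is exactly that of the paper, which gives no self-contained proof but cites \cite{h-t} p.~116 and then characterizes $X_{I,\bar s,\overline{1_h}}^{=h}$ as the locus where $\iota_{m_1}(e_1),\dots,\iota_{m_1}(e_h)$ form a Drinfeld basis of the connected part $\GC(h)^c[\PC_v^{m_1}]$. One technical correction: over the special fiber the level-$m_1$ structure at $v$ is a Drinfeld level structure, not an honest isomorphism of group schemes (the connected part is infinitesimal), and correspondingly the forgetful map $X_{I,\bar s}^{=h} \to X_{I_0,\bar s}^{=h}$ is finite flat rather than an \'etale torsor; this does not affect your argument, and the parabolic-induction identification goes through as you describe.
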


Soit $\GC(h)$ le groupe de Barsotti-Tate universel sur $X_{I,\bar s,\overline{1_h}}^{=h}$:
$$0 \rightarrow \GC(h)^c \longrightarrow \GC(h) \longrightarrow \GC(h)^{et} \rightarrow 0$$
o˘ $\GC(h)^c$ (resp. $\GC(h)^{et}$) est connexe (resp. Ètale) de dimension $h$ (resp. $d-h$).
Notons 
$\iota_{m_1}:(\PC_v^{-m_1}/\OC_v)^d \longrightarrow \GC(h)[p^{m_1}]$
la structure de niveau universelle. Notant $(e_i)_{1 \leq i \leq d}$ la base canonique de
$(\PC_v^{-m_1}/\OC_v)^d$, la strate de Newton 
$X_{I,\bar s,\overline{1_h}}^{=h}$ est alors dÈfinie par la propriÈtÈ
que $\bigl \{ \iota_{m_1}(e_i):~1 \leq i \leq h \bigr \}$ forme une base de Drinfeld de 
$\GC(h)^c[p^{m_1}]$.

\begin{nota} 
Dans la suite, et afin de ne pas alourdir les notations, on confondra un ÈlÈment
$a \in GL_d(F_v)/P_{h,d-h}(F_v)$ avec le sous-espace vectoriel 
$\langle a(e_1),\cdots,a(e_h) \rangle$ engendrÈ par les images par $a$ des
$h$ premiers vecteurs $e_1,\cdots, e_h$ de la base canonique de $F_v^d$.
On notera aussi $P_a:=aP_{h,d-h}a^{-1}$ le parabolique stabilisant $a \subset F_v^d$.
\end{nota}

Pour tout idÈal $I \in \IC$, l'ÈlÈment $a \in GL_d(F_v)/P_{h,d-h}(F_v)$ 
fournit un facteur direct $a_{m_1}$ de $(\PC_v^{-m_1}/\OC_v)^d$ et donc une strate $X_{I,\bar s,a}^{=h}$.
Dans le cas o˘ $a \in GL_d(\OC_v)$, la strate $X_{I,\bar s,a}^{=h}$ s'obtient aussi comme l'image
par $a$ de $X_{I,\bar s,\overline{1_h}}^{=h}$.

\rem $X_{I,\bar s,a}^{=h}$ peut se dÈfinir directement en demandant que pour $(f_1,\cdots,f_h)$
une base de $a_{m-1}$, 
$\bigl \{ \iota_{m_1}(f_1):~1 \leq i \leq h \}$ forme une base de Drinfeld de 
$\GC(h)^c[p^{m_1}]$.

\begin{nota} \phantomsection \label{nota-1h}
Pour tout $a \in GL_d(F_v) / P_{h,d-h}(F_v)$, on notera
$X_{I,\bar s,a}^{\geq h}$ l'adhÈrence dans $X_{I,\bar s}^{\geq h}$ de $X_{I,\bar s,a}^{=h}$. 
On dira d'une telle strate
qu'elle est pure.
\end{nota}

\rem $X_{I,\bar s,a}^{\geq h} \backslash X_{I,\bar s,a}^{=h}$ s'Ècrit comme la rÈunion des
strates $X_{I,\bar s,b}^{=h'}$ pour $h < h' \leq d$ et $b \in GL_d(F_v)/P_{h',d-h'}(F_v)$ tel qu'avec les 
notations prÈcÈdentes, $a_{m_1} \subset b_{m_1}$.

\rem
Le systËme projectif $(X_{I,\bar s,\overline{1_h}}^{=h})_{I \in \IC}$ dÈfinit un schÈma de Hecke 
$X_{\IC,\bar s,\overline{1_h}}^{=h}$ pour $G(\Am^{\oo,v}) \times P_{h,d-h}(K)$ o˘ $P_{h,d-h}(K)$
agit ‡ travers le quotient $\Zm \times GL_{d-h}(K)$ de son Levi, via l'application $\left (
\begin{array}{cc} g_v^c & * \\ 0 & g_v^{et} \end{array} \right ) \mapsto (v(\det g_v^c),g_v^{et})$. 
On dira de l'action de $GL_h(F_v)$ qu'elle est \emph{infinitÈsimale}.
Le groupe de Weil $W_v$ en $v$ agit sur 
$(X_{I,\bar s,\overline{1_h}}^{=h})_{I \in \IC}$
via son quotient $-\deg: W_v \twoheadrightarrow \Zm$ o˘ $\deg$ est la composÈe du
caractËre non ramifiÈ de $W_v$, qui envoie les Frobenius gÈomÈtriques sur les uniformisantes, avec la 
valuation $v$ de $K$.


Dans la suite du texte, nous utiliserons de maniËre cruciale la situation suivante relativement au choix
de $a \in GL_d(F_v)/P_{h,d-h}(F_v)$ et $c \in GL_d(F_v)/P_{h+1,d-h-1}(F_v)$ tels que
$a \subset c$.  Pour tout $g \geq 1$, on introduit les strates suivantes.
\begin{itemize}
\item Soit $X^{=h+g}_{\IC,\bar s,c}$ la rÈunion de strates pures
$X^{=h+g}_{\IC,\bar s,c}:=\displaystyle{\bigcup_{\atop{b:~ c \subset b}{\dim b=h+g}}} 
X^{=h+g}_{\IC,\bar s,b},$
et $X^{\geq h+g}_{\IC,\bar s,c}$ son adhÈrence.

\item Pour tout $I \in \IC$, on a une inclusion de strates pures
$X^{\geq h+1}_{I,\bar s,c} \hookrightarrow X^{\geq h}_{I,\bar s,a}$ et on utilisera le schÈma
$X^{\geq h}_{I,\bar s,a} \setminus X^{\geq h+1}_{I,\bar s,c}$ ainsi que pour $g \geq 1$:
$X^{\geq h+g}_{I,\bar s,a} \setminus X^{\geq h+g}_{I,\bar s,c}.$
\end{itemize}

\rem lorsque $I$ varie dans $\IC$, le systËme des 
$X^{\geq h}_{I,\bar s,a} \setminus X^{\geq h+1}_{I,\bar s,c}$
n'est pas projectif. Prenons par exemple $h=1$ avec $a=\langle e_1 \rangle$ et 
$c=\langle e_1,e_2 \rangle$. Pour $b=\langle e_1,e_2+\varpi_v^2 e_3 \rangle$ alors
$X_{I,\bar s,b}^{=3}$ est contenu dans $X^{\geq 1}_{I,\bar s,a} \setminus X^{\geq 2}_{I,\bar s,c}$
si et seulement si $m_1 \geq 3$.

\begin{notas} \label{nota-strate2} \phantomsection
Avec la convention que $i$ (resp. $j$) correspond ‡ une immersion fermÈe
(resp. une inclusion ouverte), on note
$$i^h:X_{\IC,\bar s}^{\geq h} \hookrightarrow X_{\IC,\bar s}^{\geq 1}, \qquad
i^h_a:X_{\IC,\bar s,a}^{\geq h} \hookrightarrow X_{\IC,\bar s}^{\geq 1}, \qquad
j^{\geq h}:X_{\IC,\bar s}^{=h} \hookrightarrow X_{\IC,\bar s}^{\geq h},  \qquad 
j^{\geq h}_{1}:X_{\IC,\bar s,\overline{1_h}}^{=h} \hookrightarrow X_{\IC,\bar s,\overline{1_h}}^{\geq h},$$
ainsi que 
$$j^{\geq h}_{a-c}: X^{\geq h}_{I,\bar s,a} \setminus
X^{\geq h+1}_{I,\bar s,c} \hookrightarrow X^{\geq h}_{I,\bar s,a}, \hbox{ et }
j^{\geq h+g}_{c}:X^{=h+g}_{\IC,\bar s,c} \hookrightarrow X^{\geq h+g}_{\IC,\bar s,c}.$$
Pour tout $g \geq 1$, on introduit aussi
$$j^{h+g}_{a-c}:X^{\geq h+g}_{I,\bar s,a} \setminus X^{\geq h+g}_{I,\bar s,c} \hookrightarrow 
X^{\geq h+g}_{I,\bar s,a}.$$
\rem lorsque $a=\overline{1_h}$, on utilisera $j^{h+g}_{\neq c}$ pour dÈsigner 
$j^{h+g}_{\overline{1_h}-c}$.

Enfin afin de de n'avoir que des faisceaux sur $X^{\geq 1}_{\IC,\bar s}$, on utilisera aussi la 
notation\footnote{ Contrairement aux prÈconisations prÈcÈdentes, ne sont pas des inclusions ouvertes.}
$$j^{=h}:=i^{h} \circ j^{\geq h}, \qquad j^{=h}_{\neq c}:=i^h_{\overline{1_h}} \circ 
j^{\geq h}_{\overline{1_h}-c}, \quad
j^{=h+g}_{\neq c}=i^{h+g}_{\overline{1_h}}  \circ j^{h+g}_{\neq c}.$$
\end{notas}

\rem On renvoie le lecteur au \S \ref{para-comple} pour des complÈments sur $j^{=h+g}_{\neq c}$.


\subsection{Filtrations de stratification d'aprËs \texorpdfstring{\cite{boyer-torsion}}{Lg}}
\label{para-rappel-filtration}

Soient $S$ le spectre d'un corps fini et $X$ un schÈma de type fini sur $S$, alors
la $t$-structure usuelle sur $\DC(X,\Lambda):=D^b_c(X,\Lambda)$ est
$$\begin{array}{l}
A \in \lexp p \DC^{\leq 0}(X,\Lambda)
\Leftrightarrow \forall x \in X,~\hi^k i_x^* A=0,~\forall k >- \dim \overline{\{ x \} } \\
A \in \lexp p \DC^{\geq 0}(X,\Lambda) \Leftrightarrow \forall x \in X,~\hi^k i_x^! A=0,~\forall k <- \dim \overline{\{ x \} }
\end{array}$$
o˘ $i_x:\spec \kappa(x) \hookrightarrow X$ et $\hi^k(K)$ dÈsigne le $k$-iËme faisceau
de cohomologie de $K$.

\begin{nota} 
On note $\lexp p \CC(X,\Lambda)$, ou simplement $\lexp p \CC$ quand le contexte est clair, le c{\oe}ur de cette $t$-structure.
Les foncteurs cohomologiques associÈs seront notÈs $\lexp p \hi^i$; pour un foncteur
$T$, on notera $\lexp p T:=\lexp p \hi^0 \circ T$.
\end{nota}

\rem  $\lexp p \CC(X,\Lambda)$ est une catÈgorie abÈlienne noethÈrienne et 
$\Lambda$-linÈaire.
Pour $\Lambda$ un corps, cette $t$-structure est autoduale pour la dualitÈ de Verdier.
Pour $\Lambda=\overline \Zm_l$, on peut munir la catÈgorie abÈlienne $\overline \Zm_l$-linÈaire 
$\lexp p \CC(X,\overline \Zm_l)$ d'une thÈorie de torsion $(\TC,\FC)$ o˘ 
$\TC$ (resp. $\FC$) est la sous-catÈgorie
pleine des objets de $l^\oo$-torsion $T$ (resp. $l$-libres $F$) , i.e. tels que $l^N 1_T$ est nul pour $N$ assez grand (resp. $l.1_F$ est un monomorphisme).

\begin{defi} Soit d'aprËs \cite{juteau}, la $t$-structure duale 
$$\begin{array}{l}
\lexp {p+} \DC^{\leq 0}(X,\overline \Zm_l):= \{ A \in \lexp p \DC^{\leq 1}(X,\overline \Zm_l):~
\lexp p \hi^1(A) \in \TC \} \\
\lexp {p+} \DC^{\geq 0}(X,\overline \Zm_l):= \{ A \in \lexp p \DC^{\geq 0}(X,\overline \Zm_l):~
\lexp p \hi^0(A) \in \FC \} \\
\end{array}$$
de c\oe ur $\lexp {p+} \CC(X,\overline \Zm_l)$  muni de sa thÈorie de torsion 
$(\FC,\TC[-1])$ \og duale \fg{} de celle de $\lexp p \CC(X,\overline \Zm_l)$.
\end{defi}

\rem pour $j:U \hookrightarrow X \hookleftarrow F:i$ avec $U$ ouvert de complÈmentaire $F$, 
la $t$-structure ainsi dÈfinie sur $X$ muni de la thÈorie de torsion prÈcÈdente, est obtenue par 
recollement ‡ partir de celles sur $U$ et $F$ selon la recette
$$\begin{array}{l}
\lexp p \DC^{\leq 0}(X,\Lambda):=\{ K \in \DC(X,\Lambda):~j^* K \in \lexp p \DC^{\leq 0}(U,\Lambda) 
\hbox{ et } i^* K \in \lexp p \DC^{\leq 0} (F,\Lambda)\} \\
\lexp p \DC^{\geq 0}(X,\Lambda):=\{ K \in \DC(X,\Lambda):~j^* K \in \lexp p \DC^{\geq 0}(U,\Lambda)
\hbox{ et } i^! K \in \lexp p \DC^{\geq 0}(F,\Lambda) \}.
\end{array}$$
o˘ les thÈories de torsion sont reliÈes par
$$\begin{array}{l}
\TC:=\{ P \in \lexp p \CC(X,\Lambda):~\lexp p i^* P \in \TC_F  \hbox{ et } j^* P \in \TC_U \} \\
\FC:=\{ P \in \lexp p \CC(X,\Lambda):~\lexp p i^! P \in \FC_F  \hbox{ et } j^* P \in \FC_U \}
\end{array}$$

\begin{defi} (cf. \cite{boyer-torsion} \S 1.3) \label{defi-FC}
Soit 
$$\FC(X,\Lambda):=\lexp p \CC(X,\Lambda) \cap \lexp {p+} \CC(X,\Lambda)
=\lexp p \DC^{\leq 0}(X,\Lambda) \cap \lexp {p+} \DC^{\geq 0}(X,\Lambda)$$
la catÈgorie quasi-abÈlienne des faisceaux pervers \og libres \fg{} sur $X$ ‡ coefficients dans
$\Lambda$. On identifiera aussi $\FC(F,\Lambda)$ avec son image dans
$\FC(X,\Lambda)$ via le foncteur $i_*=i_!=i_{!*}$.
\end{defi}

\begin{lemm} (cf. \cite{boyer-torsion} lemme 1.3.11)
Pour $j:U \hookrightarrow X$ un ouvert, on a
$$\lexp {p+} j_! \FC(U,\Lambda) \subset \FC(X,\Lambda) \quad \hbox{ et } \quad \lexp p j_* 
\FC(U,\Lambda) \subset \FC(X,\Lambda).$$
\end{lemm}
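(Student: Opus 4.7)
Les deux inclusions sont formellement sym�triques; la seconde se d�duit de la premi�re par la dualit� de Verdier sur les $\overline\Zm_l$-coefficients, qui �change les $t$-structures $\lexp p{}$ et $\lexp{p+}{}$ ainsi que les foncteurs $j_!$ et $j_*$. On se concentre donc sur la premi�re, $\lexp{p+}j_!\FC(U,\Lambda)\subset\FC(X,\Lambda)$.

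Soit $P\in\FC(U,\Lambda) = \lexp p\CC(U)\cap\lexp{p+}\CC(U)$ et posons $Q := \lexp{p+}j_!P = \lexp{p+}\hi^0(j_!P)$. Par construction $Q\in\lexp{p+}\CC(X)$, de sorte qu'il suffit de v�rifier que $Q\in\lexp p\CC(X)$. D'apr�s la description par recollement de la $t$-structure $\lexp p{}$, en notant $i:F\hookrightarrow X$ le ferm� compl�mentaire de $j$, cela �quivaut aux trois conditions $j^*Q\in\lexp p\CC(U)$, $i^*Q\in\lexp p\DC^{\leq 0}(F)$ et $i^!Q\in\lexp p\DC^{\geq 0}(F)$.

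La premi�re condition r�sulte de l'exactitude de $j^*=j^!$ pour les deux $t$-structures (c'est une immersion ouverte): on obtient $j^*Q = \lexp{p+}\hi^0(j^*j_!P) = \lexp{p+}\hi^0(P) = P\in\lexp p\CC(U)$ par hypoth�se. Pour la condition sur $i^*$, l'exactitude � droite de $j_!$ pour $\lexp p{}$ donne $j_!P\in\lexp p\DC^{\leq 0}(X)$; comme $Q$ s'ins�re dans le triangle distingu� $\lexp{p+}\tau_{\leq-1}(j_!P)\to j_!P\to Q\to$ dont le premier terme appartient � $\lexp{p+}\DC^{\leq -1}\subset\lexp p\DC^{\leq 0}$ (comparaison fondamentale issue du formalisme dual de Juteau, o� $\lexp{p+}\DC^{\leq -1}=\{A\in\lexp p\DC^{\leq 0}:~\lexp p\hi^0(A)\in\TC\}$), la suite exacte longue en $\lexp p$-cohomologie perverse force $\lexp p\hi^k(Q)=0$ pour $k>0$, d'o� $i^*Q\in\lexp p\DC^{\leq 0}(F)$ par exactitude � droite de $i^*$. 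Pour la condition sur $i^!$, on utilise $Q\in\lexp{p+}\CC(X)\subset\lexp{p+}\DC^{\geq 0}(X)\subset\lexp p\DC^{\geq 0}(X)$, puis l'exactitude � gauche de $i^!$ pour $\lexp p{}$ donne $i^!Q\in\lexp p\DC^{\geq 0}(F)$.

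Le point technique principal est pr�cis�ment la comparaison entre les troncations $\lexp p\tau$ et $\lexp{p+}\tau$, au travers des inclusions $\lexp{p+}\DC^{\leq -1}\subset\lexp p\DC^{\leq 0}$ et $\lexp{p+}\DC^{\geq 0}\subset\lexp p\DC^{\geq 0}$ fournies par le formalisme rappel� plus haut; une fois celles-ci acquises, la v�rification des trois conditions de recollement est formelle, et l'on conclut $Q\in\lexp p\CC(X)\cap\lexp{p+}\CC(X)=\FC(X,\Lambda)$. L'inclusion $\lexp pj_*\FC(U,\Lambda)\subset\FC(X,\Lambda)$ s'obtient ensuite par l'argument de dualit� esquiss� initialement, ou de fa�on directement duale en �changeant les r�les des deux $t$-structures ainsi que ceux de $j_!$ et $j_*$.
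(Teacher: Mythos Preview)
Your argument is correct. The paper does not give a proof of this lemma here; it simply cites \cite{boyer-torsion} lemme 1.3.11, so there is no in-paper proof to compare against. Your route---using the triangle $\lexp{p+}\tau_{\leq -1}(j_!P)\to j_!P\to Q\leadsto$ together with the inclusions $\lexp{p+}\DC^{\leq -1}\subset\lexp p\DC^{\leq 0}$ and $\lexp{p+}\DC^{\geq 0}\subset\lexp p\DC^{\geq 0}$, then Verdier duality for the second inclusion---is the standard one and matches in spirit what is done in the cited reference.

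One minor remark on presentation: once you have established $Q\in\lexp p\DC^{\leq 0}(X)$ via the triangle and $Q\in\lexp{p+}\DC^{\geq 0}(X)\subset\lexp p\DC^{\geq 0}(X)$, you already have $Q\in\lexp p\CC(X)$ directly; the separate verification of $i^*Q\in\lexp p\DC^{\leq 0}$ and $i^!Q\in\lexp p\DC^{\geq 0}$ via recollement is then redundant (though of course not incorrect).
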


\rem si $j_!$ est $t$-exact alors, cf. \cite{boyer-torsion} proposition 1.3.14, $j_!=\lexp p j_!=\lexp {p+} j_!$
et donc $j_! (\FC(U,\Lambda)) \subset \FC(X,\Lambda)$. D'aprËs le lemme \ref{lem-jaffine},
ce sera en particulier le cas pour les $j^{=h}$ du paragraphe prÈcÈdent.

\begin{lemm} \label{lem-libre0} \phantomsection
Soit $L \in \FC(X,\Lambda)$ tel que $j_! j^* L \in \FC(X,\Lambda)$, alors 
$i_*\lexp p \hi^{-\delta} i^* L$ est nul pour tout $\delta \neq 0,1$; pour $\delta=1$
c'est un objet de $\FC(X,\Lambda)$.
\end{lemm}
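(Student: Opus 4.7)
Le plan est de d\'eduire l'\'enonc\'e du triangle distingu\'e d'adjonction
$$j_! j^* L \longrightarrow L \longrightarrow i_* i^* L \longrightarrow j_! j^* L[1]$$
en lui appliquant le foncteur cohomologique perverse $\lexp p \hi^\bullet$. Comme par hypoth\`ese $L$ et $j_! j^* L$ appartiennent tous deux \`a $\FC(X,\Lambda) \subset \lexp p \CC(X,\Lambda)$, leurs $\lexp p \hi^k$ sont nuls pour $k \neq 0$, ce qui fournit d'une part la suite exacte courte
$$0 \longrightarrow \lexp p \hi^{-1}(i_* i^* L) \longrightarrow j_! j^* L \longrightarrow L \longrightarrow \lexp p \hi^0(i_* i^* L) \longrightarrow 0$$
et d'autre part la nullit\'e $\lexp p \hi^k(i_* i^* L) = 0$ pour tout $k \neq -1,0$.

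Dans un second temps, l'exactitude de $i_*$ pour la $t$-structure perverse permet d'identifier $\lexp p \hi^{-\delta}(i_* i^* L) \simeq i_* \lexp p \hi^{-\delta}(i^* L)$; la premi\`ere partie de l'\'enonc\'e, \`a savoir la nullit\'e pour $\delta \neq 0,1$, en r\'esulte imm\'ediatement. Pour la seconde partie, il restera \`a montrer que le sous-objet $i_* \lexp p \hi^{-1}(i^* L) \hookrightarrow j_! j^* L$, lu dans $\lexp p \CC(X,\Lambda)$, est encore un objet de $\FC(X,\Lambda)$.

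Le seul (petit) obstacle est donc de justifier que $\FC(X,\Lambda)$, vue comme sous-cat\'egorie de $\lexp p \CC(X,\Lambda)$, est stable par sous-objet: cela r\'esulte de ce que $\FC(X,\Lambda)$ co\"incide avec la classe sans torsion $\FC$ de la th\'eorie de torsion $(\TC,\FC)$ sur $\lexp p \CC(X,\Lambda)$ rappel\'ee avant la d\'efinition \ref{defi-FC}, combin\'ee avec la propri\'et\'e g\'en\'erale qu'une telle classe est stable par sous-objet puisque la partie de torsion d'un sous-objet se plonge dans celle de l'objet, donc est ici nulle.
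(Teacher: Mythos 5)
Your proof is correct and essentially the same as the paper's: both apply the perverse-cohomology long exact sequence to the adjunction triangle $j_!j^*L \to L \to i_*i^*L$, obtain the four-term exact sequence $0 \to i_*\lexp p \hi^{-1}i^*L \to j_!j^*L \to L \to i_*\lexp p \hi^0 i^*L \to 0$ together with the vanishing of the other $\lexp p\hi^{-\delta}$, and conclude that $i_*\lexp p \hi^{-1}i^*L$ is free because it is a sub-object, in $\lexp p \CC(X,\Lambda)$, of the free perverse sheaf $j_!j^*L$. Your explicit justification of this last step --- identifying $\FC(X,\Lambda)$ with the torsion-free class of the torsion theory on $\lexp p\CC(X,\Lambda)$, which is closed under sub-objects --- is what the paper leaves implicit, and it is correctly argued.
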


\begin{proof}
Partons du triangle distinguÈ
$j_!j^* L \longrightarrow L \longrightarrow i_*i^* L \leadsto$.
En utilisant la perversitÈ de $L$ et $j_! j^* L$, la suite exacte longue de cohomologie perverse
du triangle distinguÈ prÈcÈdent s'Ècrit
$$0 \rightarrow i_* \lexp p \hi^{-1} i^* L \longrightarrow \lexp p j_! j^* L
\longrightarrow L \longrightarrow i_* \lexp p \hi^0i^* L \rightarrow 0.$$
La libertÈ de $ i_* \lexp p \hi^{-1} i^* L$ dÈcoule alors de celle,
par hypothËse, de $\lexp p j_! j^* L=j_!j^* L$.
\end{proof}

Rappelons, cf. \cite{boyer-torsion} \S 1.3, que tout morphisme $f:L \longrightarrow L'$
de $\FC(X,\Lambda)$ possËde:
\begin{itemize}
\item un noyau $\ker_\FC f$ qui est le $p$-noyau de $f$, i.e. dans $\lexp p \CC(X,\Lambda)$;

\item un conoyau $\coker_\FC f$ qui est le $p+$-conoyau de $f$, i.e. dans $\lexp {p+} \CC(X,\Lambda)$;

\item une image $\im_\FC f$ qui est la $p+$-image de $f$;

\item une coimage $\coim_\FC f$ qui est la $p$-image de $f$;
\end{itemize}
tels que 
$0 \rightarrow \ker_\FC f \longrightarrow L \longrightarrow \coim_\FC f \rightarrow 0$ et
$0 \rightarrow \im_\FC f \longrightarrow L' \longrightarrow \coker_\FC\rightarrow 0$
sont des suites strictement exactes de $\FC(X,\Lambda)$, o˘ le qualificatif strict est
rappelÈ dans la dÈfinition suivante.

\begin{defi} \label{defi-strict}
Un morphisme $f:L \longrightarrow L'$ de $\FC(X,\Lambda)$ est dit \emph{strict} et 
on note $f: L \rarrow L'$, si la flËche canonique
$\coim_\FC f \longrightarrow \im_\FC f$ est un isomorphisme.
\end{defi}

\rem un monomorphisme $f:L \hookrightarrow L'$ dans $\lexp p \CC(X,\Lambda)$ entre
faisceaux pervers libres, est strict si et seulement si son conoyau dans $\lexp p \CC(X,\Lambda)$
est libre. Cela revient aussi ‡ demander que $f$ est un monomorphisme de
$\lexp {p+} \CC(X,\Lambda)$.

\begin{defi} Un \emph{bimorphisme} de $\FC(X,\Lambda)$ est un monomorphisme qui est aussi un Èpimorphisme.
\end{defi}

\noindent \textit{Exemple}: $\coim_\FC f \longrightarrow \im_\FC f$ est un bimorphisme.

\begin{nota} \label{nota-htarrow1}
On notera $L \htarrow L'$ un bimorphisme de $\FC(X,\Lambda)$. Si en outre 
le noyau dans $\lexp {p+} \CC(X,\Lambda)$ est de dimension strictement
plus petite que celle du support de $L$, on notera
$L \htarrow_+ L'$.
\end{nota}

\rem tout morphisme $f:L \longrightarrow L'$ de $\FC(X,\Lambda)$ admet, cf. 
\cite{boyer-torsion} proposition 1.3.7, une factorisation canonique
$L \tarrow \coim_\FC f \htarrow \im_\FC f \harrow L'.$

\begin{defi}  \phantomsection \label{defi-F-filtration}
Pour $L$ un objet de $\FC(X,\Lambda)$, on dira que 
$$L_1 \subset L_2 \subset \cdots \subset L_e=L$$ 
est une $\FC$-filtration
si pour tout $1 \leq i \leq e-1$, $L_i \hookrightarrow L_{i+1}$ est un monomorphisme strict.
\end{defi} 


Pour $L \in \FC(X,\Lambda)$, on considËre le diagramme suivant 
$$\xymatrix{ 
& L \ar[drr]^{\can_{*,L}} \\
\lexp {p+} j_! j^* L \ar[ur]^{\can_{!,L}} 
\ar@{->>}[r]|-{+} & \lexp p j_{!*}j^* L \ar@{^{(}->>}[r]_+ & 
\lexp {p+} j_{!*} j^* L \ar@{^{(}->}[r]|-{+} & \lexp p j_*j^* L
}$$
o˘ la ligne du bas est, cf. la remarque suivant 1.3.12 de \cite{boyer-torsion},
la factorisation canonique de
$\lexp {p+} j_! j^* L \longrightarrow \lexp {p} j_* j^* L$
et les flËches $\can_{!,L}$ et $\can_{*,L}$ donnÈes par adjonction. 

\begin{defi} \phantomsection \label{defi-filtration0}
On note 
$\PC_L:=i_*\lexp p \hi^{-1}_{libre} i^*j_* j^* L =\ker_\FC \Bigl ( \lexp {p+} j_! 
j^* L \twoheadrightarrow \lexp p j_{!*} j^* L \Bigr ).$
Avec les notations du diagramme ci-dessus, on pose
$$\Fil^0_{U,!}(L)=\im_\FC (\can_{!,L}) \quad \hbox{ et } \quad \Fil^{-1}_{U,!}(L)=
\im_\FC \Bigl ( (\can_{!,L})_{|\PC_L} \Bigr ).$$
\end{defi}

\rem d'aprËs le lemme 2.1.2 de \cite{boyer-torsion},
$\Fil^{-1}_{U,!}(L) \subset \Fil^0_{U,!}(L) \subset L$
est une $\FC$-filtration au sens de \ref{defi-F-filtration}, avec
$L/\Fil^0_{U,!}(L) \simeq i_* \lexp {p+} i^* L$ et 
$\lexp p j_{!*} j^* L \htarrow_+ \Fil^0_{U,!}(L)/\Fil^{-1}_{U,!}(L)$, ce qui d'aprËs 
le lemme 1.3.13 de \cite{boyer-torsion} donne un triangle commutatif
$$\xymatrix{
\lexp p j_{!*} j^* L \ar@{^{(}->>}[rr]_+ \ar@{^{(}->>}[drr]_+ 
& & \Fil^0_{U,!}(L)/\Fil^{-1}_{U,!}(L) \ar@{^{(}->>}[d]_+ \\
& & \lexp {p+} j_{!*} j^* L.
}$$

\begin{defi} \phantomsection \label{defi-saturee}
La filtration $\Fil_{U,!}^{-1}(L) \subset \Fil^0_{U,!}(L) \subset L$
est dite \emph{saturÈe} si $\can_{!,L}$ est strict i.e. si
$\Fil^0_{U,!}(L)=\coim_\FC(\can_{!,L}).$
\end{defi}

Soit $X$ un schÈma muni 
\begin{itemize}
\item d'une stratification
$\SF=\{ X=X^{\geq 1} \supset X^{\geq 2} \supset \cdots \supset X^{\geq e} \}$ et

\item de la donnÈe pour tout $1 \leq h \leq e$ d'un ensemble fini $\LFF(h)$ de classes
d'isomorphismes de $\overline \Qm_l$-faisceaux localement constants irrÈductibles
sur $X^{=h}:=X^{\geq h}-X^{\geq h+1}$ tels qu'en notant $j^{=h}=i^h \circ j^{\geq h}$ avec
$j^{\geq h}:X^{=h} \hookrightarrow X^{\geq h}$ et $i^h:X^{\geq h} \hookrightarrow X$,
$$\forall \LC \in \LFF(h),\quad R^nj^{=h}_* \LC \hbox{ est } (\SF,\LFF)-\hbox{constructible}$$
au sens o˘ sa restriction ‡ tous les $X^{=h'}$ est une extension itÈrÈe finie de faisceaux
irrÈductibles localement constants appartenant ‡ $\LFF(h')$.
\end{itemize}

\begin{nota}
On note $D^b_{\SF,\LFF}(X,\overline \Qm_l)$ la sous-catÈgorie pleine de $D^b(X,\overline \Qm_l)$
des complexes dont les faisceaux de cohomologie sont $(\SF,\LFF)$-constructibles au sens prÈcÈdent
ainsi que $\lexp p \CC_{\SF,\LFF}(X,\overline \Qm_l)$ la catÈgorie abÈlienne des 
$\overline \Qm_l$-faisceaux pervers $(\SF,\LFF)$-constructibles.
\end{nota}

\begin{defi} \label{defi-support}
Un faisceau pervers $P \in \lexp p \CC_{\SF,\LFF}(X,\overline \Qm_l)$ est dit ‡ support dans $X^{\geq h}$, 
si $h$ est le plus petit entier $1 \leq r \leq d$ tel que $j^{=r,*} P$ est un systËme local non nul.
Un faisceau pervers libre $P \in \FC(X,\Lambda)$ sera dit ‡ support dans $X^{\geq h}$
si $P \otimes_{\overline \Zm_l} \overline \Qm_l \in \lexp p \CC_{\SF,\LFF}(X,\overline \Qm_l)$
l'est au sens prÈcÈdent.
\end{defi}

\rem dans la suite on appliquera ces dÈfinitions pour la stratification de Newton de la fibre spÈciale
d'une variÈtÈ de Shimura de Kottwitz-Harris-Taylor, \emph{nous ferons 
alors disparaitre $\SF$ des notations}, avec $\LFF$ constituÈ des systËmes locaux
d'Harris-Taylor. La propriÈtÈ de stabilitÈ de $\LFF$ par les foncteurs $Rj^{=h}_*$
dÈcoule des rÈsultats de \cite{boyer-invent2}. Dans cette situation on dira que $P$ est ‡ support dans 
$X^{\geq h}_{\IC,\bar s,\AC}$ s'il est
‡ support dans $X^{\geq h}_{\IC,\bar s}$ et si pour toute strate pure $X^{\geq h}_{\IC,\bar s,a}$, 
$j^{=h,*}_a P$ est un systËme local non nul si et seulement si $a \in \AC$.

On cherche ‡ prÈsent ‡ construire des filtrations d'un $L \in \FC(X,\Lambda)$ tel que
$L \otimes_{\overline \Zm_l} \overline \Qm_l \in \lexp p \CC_{\SF,\LFF}(X,\overline \Qm_l)$, 
‡ l'aide de la stratification $\SF$.

\begin{defi} \label{defi-SF-filtration}
Soit $L \in \FC(X,\overline \Zm_l)$ tel que 
$L \otimes_{\overline \Zm_l} \overline \Qm_l \in \lexp p \CC_{\SF,\LFF}(X,\overline \Qm_l)$.
On dira d'une $\FC$-filtration 
$$0=\Fil^{0}_{\SF}(L) \subset \Fil^1_{\SF}(L) \subset \Fil^2_{\SF}(L) \cdots \subset 
\Fil^{e-1}_{\SF}(L) \subset \Fil^e_{\SF}(L)=L,$$
qu'elle est de type $\SF_!$ si pour tout $k$ et $h$ tel que $\gr_{\SF}^k(L)$ est ‡ support dans 
$X^{\geq h}$, le morphisme d'adjonction
$$\lexp {p+} j^{=h}_! j^{=h,*} \gr_{\SF}^k(L) \longrightarrow \gr_{\SF}^k(L)$$
est un Èpimorphisme, pas nÈcessairement strict, de $\FC(X,\Lambda)$.
\end{defi}


%


\begin{defi} 
Pour $1 \leq h <e$, on note $X^{1 \leq h}:=X^{\geq 1}-X^{\geq h+1}$ et
$j^{1 \leq h}:X^{1†\leq h} \hookrightarrow X^{\geq 1}$.
Pour $L \in \FC(X,\overline \Zm_l)$ 
soit
$$\Fil^r_{\SF,!}(L):=\im_\FC \Bigl ( \lexp {p+} j^{1 \leq r}_! j^{1 \leq r,*} L \longrightarrow L \Bigr ).$$
\end{defi}

\begin{prop} \phantomsection \label{prop-ss-filtration} (cf. \cite{boyer-torsion} \S 2.2)
La dÈfinition prÈcÈdente munit fonctoriellement tout objet $L$ de $\FC(X,\Lambda)$ d'une 
$\FC$-filtration de type $\SF_!$ dite de stratification
$$0=\Fil^{0}_{\SF,!}(L) \subset \Fil^1_{\SF,!}(L) \subset \Fil^2_{\SF,!}(L) \cdots \subset 
\Fil^{e-1}_{\SF,!}(L) \subset \Fil^e_{\SF,!}(L)=L.$$
\end{prop}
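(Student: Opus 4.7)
Le plan est de proc\'eder par r\'ecurrence sur la profondeur de la stratification, en ramenant chaque nouvelle \'etape \`a une application de la d\'efinition \ref{defi-filtration0} pour deux strates.

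D'abord, pour la bonne d\'efinition, la restriction $j^{1\leq r,*} L$ appartient \`a $\FC(X^{1\leq r},\Lambda)$ d\`es que $L \in \FC(X,\Lambda)$, et le lemme 1.3.11 de \cite{boyer-torsion} rappel\'e ci-dessus donne $\lexp {p+} j^{1\leq r}_! j^{1\leq r,*} L \in \FC(X,\Lambda)$. Le morphisme d'adjonction vers $L$ est donc un morphisme dans $\FC(X,\Lambda)$ et son image $\im_\FC$ d\'efinit $\Fil^r_{\SF,!}(L)$ comme sous-objet strict de $L$ au sens de \ref{defi-strict}. Pour la croissance, on factorise $j^{1\leq r} = j^{1\leq r+1} \circ k_r$ avec $k_r : X^{1\leq r} \hookrightarrow X^{1\leq r+1}$ ouverte, et la counit\'e $k_{r,!} k_r^* \to \Id$ appliqu\'ee \`a $j^{1\leq r+1,*} L$ produit, apr\`es $\lexp {p+} j^{1\leq r+1}_!$, une fl\`eche naturelle compatible aux morphismes d'adjonction vers $L$; le passage aux $\im_\FC$ fournit l'inclusion $\Fil^r_{\SF,!}(L) \subset \Fil^{r+1}_{\SF,!}(L)$, la fonctorialit\'e en $L$ \'etant alors automatique.

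La difficult\'e principale est la stricte monomorphicit\'e de cette inclusion, i.e. la libert\'e du conoyau pris dans $\lexp p \CC(X,\Lambda)$. L'id\'ee est de se ramener au cas de deux strates en appliquant la d\'efinition \ref{defi-filtration0} sur l'ouvert $X^{1\leq r+1}$ \`a $j^{1\leq r+1,*} L$ relativement \`a $k_r$: la remarque suivant \ref{defi-filtration0} fournit un monomorphisme strict $\Fil^0_{U,!}(j^{1\leq r+1,*}L) \subset j^{1\leq r+1,*}L$ dans $\FC(X^{1\leq r+1},\Lambda)$. Il reste alors \`a v\'erifier que le foncteur $\lexp {p+} j^{1\leq r+1}_!$ appliqu\'e \`a ce monomorphisme strict, puis compos\'e avec le morphisme d'adjonction vers $L$ et le passage \`a $\im_\FC$, red\'efinit bien $\Fil^r_{\SF,!}(L) \subset \Fil^{r+1}_{\SF,!}(L)$ en pr\'eservant la stricte monomorphicit\'e.

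Cette pr\'eservation est l'obstacle principal, puisque $j^{1\leq r+1}_!$ n'est pas $t$-exact en g\'en\'eral: le contr\^ole se fera en analysant les suites exactes longues de cohomologie perverse issues des triangles distingu\'es reliant $j^{=s}_! j^{=s,*} L$ pour $s \leq r+1$, combin\'ees \`a la libert\'e des objets intervenant et \`a la recette de recollement pour la th\'eorie de torsion rappel\'ee apr\`es la d\'efinition de $\lexp {p+} \CC$. Une fois la stricte monomorphicit\'e obtenue \`a chaque \'etape, on conclut par r\'ecurrence sur $r$, en partant de $\Fil^0_{\SF,!}(L)=0$ (cas vide) jusqu'\`a $\Fil^e_{\SF,!}(L)=L$, obtenu en observant que $j^{1\leq e}$ est l'identit\'e.
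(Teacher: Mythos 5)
Votre sch\'ema identifie bien le point d\'elicat --- la stricte monomorphicit\'e des inclusions $\Fil^r_{\SF,!}(L) \subset \Fil^{r+1}_{\SF,!}(L)$ --- mais ne le d\'emontre pas. Vous proposez de vous ramener \`a deux strates via la d\'efinition \ref{defi-filtration0} sur l'ouvert $X^{1 \leq r+1}$ puis de pousser par $\lexp {p+} j^{1 \leq r+1}_!$, et vous reconnaissez vous-m\^eme que la non-$t$-exactitude de ce foncteur est \og l'obstacle principal \fg{}, en renvoyant \`a une analyse de suites exactes longues que vous n'effectuez pas. En l'\'etat, le c\oe ur de la proposition est donc admis et non prouv\'e: rien dans votre texte n'\'etablit la libert\'e dans $\lexp p \CC(X,\Lambda)$ du conoyau $\Fil^{r+1}_{\SF,!}(L)/\Fil^{r}_{\SF,!}(L)$.

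Or ce point ne demande aucune analyse cohomologique. Vous observez d\'ej\`a que $\Fil^r_{\SF,!}(L)=\im_\FC\bigl(j^{1 \leq r}_! j^{1 \leq r,*} L \to L\bigr)$ est un sous-objet \emph{strict} de $L$, la suite $0 \to \im_\FC f \to L \to \coker_\FC f \to 0$ \'etant strictement exacte dans $\FC(X,\Lambda)$; autrement dit $\Fil^r_{\SF,!}(L) \hookrightarrow L$ est aussi un monomorphisme de $\lexp {p+} \CC(X,\Lambda)$. Vous \'etablissez aussi, via $j^{1 \leq r}=j^{1 \leq r+1} \circ k_r$, l'inclusion $\Fil^r_{\SF,!}(L) \subset \Fil^{r+1}_{\SF,!}(L)$ dans $L$. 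Il ne reste alors qu'\`a invoquer le fait \'el\'ementaire rappel\'e en note de bas de page dans la preuve de la proposition \ref{prop-principal}: si $w=v \circ u$ est un monomorphisme strict alors $u$ l'est aussi --- une composition qui est un monomorphisme \`a la fois dans $\lexp p \CC$ et dans $\lexp {p+} \CC$ force $u$ \`a l'\^etre dans chacune des deux cat\'egories, ce qui, d'apr\`es la remarque suivant la d\'efinition \ref{defi-strict}, caract\'erise la stricte monomorphicit\'e entre faisceaux pervers libres. Appliqu\'e \`a $\Fil^r_{\SF,!}(L) \hookrightarrow \Fil^{r+1}_{\SF,!}(L) \hookrightarrow L$, cela conclut directement, sans avoir \`a contr\^oler la $t$-exactitude de $j^{1 \leq r+1}_!$.
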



%
 
\begin{defi} \phantomsection \label{defi-SF-saturee}
On dira que $L$ est \emph{$\SF_!$-saturÈ} si pour tout  $1 \leq r \leq e-1$ le 
morphisme d'adjonction $\lexp {p+} j^{1 \leq r}_! j^{1 \leq r,*} L \longrightarrow L$ est strict, i.e. si
$$\Fil^r_{\SF,!}(L)=\coim_\FC \bigl (\lexp {p+} j^{1 \leq r}_! j^{1 \leq r,*} L \longrightarrow L \bigr ).$$
Autrement dit si pour tout $1 \leq r \leq e-1$, $\lexp p i^{r+1,*} L$ est un objet de 
$\FC(X,\Lambda)$.
\end{defi}

\rem les filtrations de type $\SF_!$ ne sont, en gÈnÈral, pas assez fines. 
Dans \cite{boyer-torsion} proposition 2.3.3, en utilisant les $\Fil^{-1}_{U,!}$, on construit
de faÁon fonctorielle la filtration exhaustive de stratification de tout objet $L$ de 
$\FC(X,\Lambda)$, 
\addtocounter{smfthm}{1}
\begin{equation} \label{eq-fill}
0=\Fill^{-2^{e-1}}_{\SF,!}(L) \subset \Fill^{-2^{e-1}+1}_{\SF,!}(L) \subset \cdots \subset
\Fill^0_{\SF,!}(L) \subset \cdots \subset \Fill^{2^{e-1}-1}_{\SF,!}(L)=L,
\end{equation}
telle que tous les graduÈs $\grr^k_\SF$ sont libres et sont, sur $\overline \Qm_l$,
simples, i.e. vÈrifient $\lexp p j^{= h}_{!*} j^{= h,*} \grr^k_\SF  \htarrow_+ \grr^k_\SF$,
o˘ $X^{\geq h}$ est le support de $\grr^k_\SF$.
DÈcrivons rapidement la construction de loc. cit.:
\begin{itemize}
\item on commence par regarder le morphisme d'adjonction 
$\lexp {p+} j^{=1}_! j^{=1,*} L \longrightarrow L$
dont le conoyau $Q_1$ donnera des graduÈs pour des indices  strictement positifs alors que 
l'image dans $L$ du noyau $P_1$ 
de $\lexp {p+} j^{=1}_! j^{=1,*} L \longrightarrow \lexp p j^{=1}_{!*} j^{=1,*} L$ donnera des 
graduÈs pour des indices strictement nÈgatifs, le graduÈ d'indice $0$ sera tel que 
$\lexp p j^{=1}_{!*} j^{=1,*} L \htarrow_+ \gr^0_{\SF,!}(L)$.

\item On passe alors ‡ la strate suivante $X^{=2}_{\IC,\bar s}$ pour $P_1$ et $Q_1$. Pour
$F:=P_1$ (resp. $F:=Q_1$), on considËre $\lexp {p+} j^{=2}_! j^{=2,*} F \longrightarrow F$. 
Le conoyau de ce morphisme donnera des graduÈs pour les indices $-2^{e-2}< k < 0$
(resp. $2^{e-2} < k < 2^{e-1}$), alors que le noyau de 
$\lexp {p+} j^{=2}_! j^{=2,*} F \longrightarrow \lexp p j^{=2}_{!*} j^{=2,*} F$ donnera des graduÈs pour les
indices strictement infÈrieurs ‡ $-2^{e-2}$ (resp. $0<k<2^{e-2}$); le graduÈ d'indice $k=-2^{e-2}$ 
(resp. $k=2^{e-2}$) vÈrifiant $\lexp p j^{=2}_{!*} j^{=2,*} F \htarrow_+ \gr^k_{\SF,!}(L)$.

\item On traite ainsi toutes les strates jusqu'‡ $h=e$.
\end{itemize}

%
%
%

Dualement en utilisant $\can_{*,L}$ avec
$$\CoFil_{*,r}(L)=\coim_\FC\Bigl ( L \longrightarrow \lexp {p} j^{1 \leq r}_* j^{1 \leq r,*}L \Bigr ),$$
on dÈfinit une cofiltration
$$L = \CoFil_{\SF,*,d}(L) \twoheadrightarrow \CoFil_{\SF,*,d-1}(L) \twoheadrightarrow 
\cdots
\twoheadrightarrow \CoFil_{\SF,*,1}(L) \twoheadrightarrow \CoFil_{\SF,*,0}(L)=0,$$
et
$0=\Fil_{*}^{-d}(L) \subset \Fil_{*}^{1-d}(L) \subset \cdots \subset \Fil_{*}^0(L)=L$
o˘ $\Fil_{*}^{-r}(L):=\ker_\FC \bigl ( L \twoheadrightarrow \CoFil_{*,r}(L) \bigr ).$

\subsection{Rappels sur les faisceaux pervers de Hecke}
\label{para-schema-hecke}

Soit $\Xm_\IC=(X_I)_{I \in \IC}$ un schÈma de Hecke pour $(\Gm,\IC)$ au sens du \S 1.2.2 
de \cite{boyer-invent2}, pour $\Gm=G(\Am^{\oo,v}) \times P(F_{v})$ o˘ $P(F_{v})$ est un sous-groupe de $G(F_{v}) \simeq GL_{d}(F_{v})$. Rappelons que
\begin{itemize}
\item $\Xm_{\IC}$ est un systËme projectif de schÈmas relativement ‡ des morphismes dits de restriction du niveau
$[1]_{J,I}:\Xm_{J} \longrightarrow \Xm_{I}$, finis et plats;

\item pour tout $g \in \Gm$ et tous $J \subset I$ tels que $g^{-1}Jg \subset I$, on dispose d'un morphisme fini
$[g]_{J,I}:\Xm_{J} \longrightarrow \Xm_{I}$ vÈrifiant les propriÈtÈs suivantes
\begin{itemize}
\item pour $g \in I$ et $J \subset I$, $[g]_{J,I}=[1]_{J,I}$;

\item pour tous $g,g' \in \Gm$, et tous $K \subset J \subset I$ tels que $g^{-1} J g \subset
I$ et $(g')^{-1} K g' \subset J$, on a
$[gg']_{K,I}= [g]_{J,I} \circ [g']_{K,J}: \Xm_K \longto \Xm_J \longto \Xm_I.$

\end{itemize}
\end{itemize}
La catÈgorie
$\FPH_{\Gm}(\Xm_\IC;\Lambda)$ (resp. $\FH_{\Gm}(\Xm_\IC;\Lambda)$) des \textit{faisceaux pervers} (resp. des \textit{faisceaux})
\textit{de Hecke} sur $\Xm_\IC$ ‡ coefficients dans $\Lambda$ est dÈfinie comme la catÈgorie dont:
\begin{itemize}
\item les objets sont les systËmes $(\FC_I)_{I \in \IC}$ o˘ $\FC_I$ est un faisceau
pervers (resp. faisceau) sur $\Xm_I$ ‡ coefficients dans $\Lambda$, tels que
pour tout $g \in \Gm$ et $J \subset I$ tel que $g^{-1} J g \subset
I$, on dispose d'un morphisme de faisceaux sur $\Xm_I$, $u_{J,I}(g):\FC_I \longto [g]_{J,I,*} \FC_J$ soumis ‡ la condition de cocycle
$u_{K,I}(g'g)=[g]_{J,I,*} (u_{K,J}(g')) \circ u_{J,I}(g)$;

\item Les flËches sont les systËmes $(f_I:\FC_I \longto \FC'_I)_{I \in \IC}$ 
avec des diagrammes commutatifs:
$$\xymatrix{
\FC_I \ar[rrr]^{u_{J,I}(g)} \ar[d]^{f_I}  & & & [g]_{J,I,*} (\FC_J) \ar[d]^{[g]_{J,I,*} (f_J)} \\
\FC_I' \ar[rrr]^{u_{J,I}(g)} & & & [g]_{J,I,*} (\FC_J')
}$$
\end{itemize}

\rem par rapport ‡ \cite{boyer-invent2} \S 1.3.7, on a supprimÈ les conditions (ii) et (iii). Les propositions 6.1 et 6.2
de loc. cit. sont encore valables, i.e. $\FPH_{\Gm}(\Xm_\IC;\Lambda)$ et $\FH_{\Gm}(\Xm_\IC;\Lambda)$ sont des catÈgories abÈliennes
munies de foncteurs $j_!, i*$ (resp. $Rj_*, i_*$, resp. $j_*,Ri^!$) qui sont $t$-exacts ‡ droite (resp. $t$-exacts,
resp. $t$ exacts ‡ gauche) avec les propriÈtÈs d'adjonction habituelles, de sorte que l'on se retrouve ‡ nouveau dans une
situation de recollement.

Pour $\Lambda=\overline \Zm_{l}$, comme les $[g]_{J,I,*}$ sont $t$-exacts, les thÈories de torsion ‡ chaque Ètage munissent
$\FPH_{\Gm}(\Xm_\IC;\Lambda)$ d'un \og systËme \fg{} de thÈories de torsion et donc d'un 
\og systËme \fg{} de $t$-structure $p+$, i.e. ‡ chaque Ètage.

\begin{nota} 
On notera $\FC(\Xm_{\IC},\overline \Zm_{l})$ la catÈgorie quasi-abÈlienne des faisceaux 
pervers \og libres \fg{} de Hecke, i.e. le systËme de Hecke des $\FC(\Xm_I,\overline \Zm_l)$
pour $I \in \IC$.
\end{nota}

\section{Sur les faisceaux pervers d'Harris-Taylor}

On renvoie le lecteur ‡ l'appendice \ref{app-B1} pour les rappels sur les systËmes
locaux d'Harris-Taylor.
On considËre dans cette section, un $\overline \Zm_l$-systËme local d'Harris-Taylor 
$HT_{\overline{1_h}}(\pi_v,\Pi_t)$ associÈ ‡ une reprÈsentation irrÈductible cuspidale $\pi_v$ de 
$GL_g(F_v)$ et $\Pi_t$ une reprÈsentation quelconque, qui ne jouera aucun rÙle, de $GL_{tg}(F_v)$
o˘ on a aussi posÈ $h=tg$. On entend par l‡ qu'on a fixÈ un $\overline \Zm_l$-rÈseau stable par
l'action de $P_{h,d-h}(F_v)$. Le but de cette section est, en particulier, de prouver que les
faisceaux de cohomologie de $\lexp p j^{=tg}_{\overline{1_{tg}},!*} HT_{\overline{1_h}}(\pi_v,\Pi_t)$
sont sans torsion.

\subsection{Rappel sur les stratifications miraboliques}

Soit $F$ un faisceau pervers libre sur $X^{\geq h}_{\IC,\bar s,\overline{1_h}}$ tel que
$j^{=h,*}_{\overline{1_h}} F$ est de la forme $HT_{\overline{1_h}}(\pi_v,\Pi_t)$. On note
$P_F:=i^{h+1}_{*} \lexp p \hi^{-1} i^{h+1,*} F$ avec
\addtocounter{smfthm}{1}
\begin{equation} \label{eq-sec-fc1}
0 \rightarrow P_F \longrightarrow j^{=h}_{\overline{1_h},!}j^{=h,*}_{\overline{1_h}} F 
\longrightarrow F \rightarrow 0.
\end{equation}

On considËre alors comme dans \cite{boyer-FT} une strate pure 
$X^{\geq h+1}_{\IC,\bar s,c} \subset X^{\geq h}_{\IC,\bar s,\overline{1_h}}$, et on note pour tout
$r,\delta \geq 0$:
$$i^{h+r \leq +\delta}_{\overline{1_h},c}: X^{\geq h+r+\delta}_{\IC,\bar s,c} \subset 
X^{\geq h+r}_{\IC,\bar s,\overline{1_h}}.$$
Soit alors le triangle distinguÈ
$$j^{=h}_{\neq c,!} j^{=h,*}_{\neq c} P_F \longrightarrow P_F \longrightarrow 
i^{h+1}_{c,*} i^{h+1,*}_c P_F \leadsto$$

\begin{lemm} \label{lem-HT1}
Le complexe $i^{h+1,*}_c P_F$ est un faisceau pervers libre.
\end{lemm}

\begin{proof}
On reprend la preuve de la proposition 6.2 de \cite{boyer-FT}.
Comme $F$ est de la forme $i^h_{\overline{1_h},*} F'$, il suffit de montrer que 
$$i^{h+1 \leq +0,*}_{\overline{1_h},c} \bigl ( \lexp p \hi^{-1} i^{h\leq +1,*}_{\overline{1_h}} F' \bigr )$$
est $p$-pervers sans torsion. Pour ce faire, on utilise la suite spectrale
$$E_2^{r,s}=\lexp p \hi^r i^{h+1 \leq +0,*}_{\overline{1_h},c} \Bigl ( \lexp p \hi^s 
i^{h \leq +1,*}_{\overline{1_h}}
F' \Bigr ) \Rightarrow \lexp p \hi^{r+s} i^{h \leq +1,*}_{\overline{1_h},c} F'.$$
Comme $j^{\geq h}_{\overline{1_h}}$
est affine, d'aprËs le lemme \ref{lem-libre0}, $\lexp p \hi^s i^{h \leq +1,*}_{\overline{1_h}} F'$ est nul
pour tout $s<-1$; la surjectivitÈ $j^{\geq h}_{\overline{1_h},!} j^{\geq h,*}_{\overline{1_h}} F' 
\twoheadrightarrow F'$, implique aussi la nullitÈ pour $s=0$. Ainsi la suite spectrale prÈcÈdente
dÈgÈnËre en $E_2$ avec
$$\lexp p \hi^r i^{h \leq +1,*}_{\overline{1_h},c} F' \simeq \lexp p \hi^{r+1} i^{h+1 \leq +0,*}_{\overline{1_h},c}
\bigl ( \lexp p \hi^{-1} i^{h \leq +1,*}_{\overline{1_h}} F' \bigr ).$$

De la mÍme faÁon, comme $j^{\geq h}_{\overline{1_h}- c}:X^{\geq h}_{\IC,\bar s,\overline{1_h}}  
\setminus X^{\geq h+1}_{\IC,\bar s,c} \hookrightarrow X^{\geq h}_{\IC,\bar s,\overline{1_h}}$ est affine,
$\lexp p \hi^r i^{h \leq +1,*}_{\overline{1_h},c} F' $ est nul pour $r<-1$ et sans torsion pour $r=-1$
d'aprËs le lemme \ref{lem-libre0}, d'o˘ le rÈsultat.
\end{proof}

Ainsi le triangle distinguÈ prÈcÈdent s'Ècrit sous la forme d'une suite exacte courte
\addtocounter{smfthm}{1}
\begin{equation} \label{eq-sec-fc2}
0 \rightarrow j^{=h}_{\neq c,!} j^{=h,*}_{\neq c} P_F \longrightarrow P_F \longrightarrow 
i^{h+1}_{c,*} i^{h+1,*}_c P_F \rightarrow 0.
\end{equation}

\emph{ConsidÈrons dans un premier temps, le cas o˘ $F=\lexp p j^{=tg}_{\overline{1_{tg}},!*}
HT_{\overline{1_h}}(\pi_v,\Pi_t)$.}
Dans \cite{boyer-FT} proposition 6.4, 
on montre que sur $\overline \Qm_l$, on a 
$$i^{h+1,*}_c P_F \otimes_{\overline \Zm_l} \overline \Qm_l \simeq \lexp p j^{=(t+1)g}_{c,!*}
j^{=(t+1)g,*}_c P_F \otimes_{\overline \Zm_l} \overline \Qm_l.$$
L'objectif principal de cette section est ainsi de dÈmontrer que cet isomorphisme est encore valable sur 
$\overline \Zm_l$ sachant qu'‡ priori on sait simplement que
$$\lexp p j^{=(t+1)g}_{c,!*} j^{=(t+1)g,*}_c P_F \htarrow_+ i^{h+1,*}_c P_F \htarrow_+
\lexp {p+} j^{=(t+1)g}_{c,!*} j^{=(t+1)g,*}_c P_F.$$

Des suites exactes courtes (\ref{eq-sec-fc1}) et (\ref{eq-sec-fc2}), on en dÈduit le lemme suivant qui
nous ramËne ainsi, en raisonnant par rÈcurrence, ‡ prouver les isomorphismes (\ref{eq-iso-p}).

\begin{lemm} \label{lem-trivial}
Si les faisceaux de cohomologie de $\lexp p j^{=(t+1)g}_{!*} HT(\pi_v,\Pi_{t+1})$ sont sans torsion et si 
avec les notations prÈcÈdentes pour 
$F=\lexp p j^{=tg}_{\overline{1_{tg}},!*} HT_{\overline{1_h}}(\pi_v,\Pi_t)$, on a
\addtocounter{smfthm}{1}
\begin{equation} \label{eq-iso-p}
i_{c,*}^{tg+1} i^{tg+1,*}_c P_F \simeq \lexp p j^{=(t+1)g}_{c,!*} j^{=(t+1)g,*}_c P_F,
\end{equation}
alors les faisceaux de cohomologie de $\lexp p j^{=tg}_{!*} HT(\pi_v,\Pi_{t})$ sont sans torsion.
\end{lemm}

Pour des raisons ÈlÈmentaires, cf. la proposition \ref{prop-dec-pervers}, on ne peut pas espÈrer l'ÈgalitÈ
entre les deux extensions intermÈdiaires d'un systËme local d'Harris-Taylor.
L'un des rÈsultats principaux de ce papier, cf. la proposition \ref{prop-ext-pp}
qui sera prouvÈe au \S \ref{para-ext-pp-preuve}, est que les deux extensions
intermÈdiaires d'un systËme local d'Harris-Taylor indexÈ par une cuspidale
$\pi_v$ de $GL_g(F_v)$, 
coÔncident si et seulement si la rÈduction modulo $l$ de $\pi_v$
est encore supercuspidale. Dans le cas o˘ $g=1$ et donc lorsque $\pi_v$
 est un caractËre $\chi_v$ de $F_v^\times$, on dispose d'un argument simple.
En effet avec les notations de \S \ref{app-B1},
$\LC(\chi_v[h]_D)$ est isomorphe ‡ $\overline \Zm_l$ o˘
l'action du groupe fondamental 
$\Pi_1(X^{=h}_{\IC,\bar s})$ de $X^{=h}_{\IC,\bar s}$ se factorise par son quotient 
$\Pi_1(X^{=h}_{\IC,\bar s}) \twoheadrightarrow \DC_{v,h}^\times$, o˘ l'action de
$\DC_{v,h}^\times$ est donnÈe par un caractËre $\chi_v$.

\begin{lemm} \label{lem-ext0} \phantomsection
On a
$$\lexp p j^{\geq h}_{\overline{1_h},!*}  \LC(\chi_v[h]_D)[d-h] 
\simeq \lexp {p+} j^{\geq h}_{\overline{1_h},!*} 
\LC(\chi_v[h]_D)[d-h] \simeq \overline \Zm_l [d-h].$$
\end{lemm}

\begin{proof}
Rappelons que $X^{\geq h}_{\IC,\bar s,\overline{1_h}}$ est lisse sur $\spec  \overline \Fm_p$ de sorte que
$\overline \Zm_l[d-h]$ y est pervers au sens des deux 
$t$-structures avec $i^{h \leq +1,*}_{\overline{1_h}} \overline \Zm_l[d-h] \in 
\lexp p \DC^{< 0}$ et $i^{h \leq +1,!}_{\overline{1_h}} \overline \Zm_l[d-h] \in 
\lexp {p+} \DC^{\geq 1}$, d'o˘ le rÈsultat.
\end{proof}

Revenons ‡ $P_F$ et $j^{=h}_{\neq c,!} j^{=h,*}_{\neq c} P_F$ que l'on peut
filtrer en utilisant les constructions du \S \ref{para-rappel-filtration}. En particulier,
il a un quotient
$j^{=h}_{\neq c,!} j^{=h,*}_{\neq c} P_F \twoheadrightarrow Q_{F,\neq c}$
tel que 
$$Q_{F,\neq c} \otimes_{\overline \Zm_l} \overline \Qm_l \simeq
\bigoplus_{\atop{a:~\overline{1_h} \subset a}{c \not \subset a}} 
\lexp p j^{=h+g}_{a,!*} j^{=h+g,*}_a P_F \otimes_{\overline \Zm_l} \overline \Qm_l.$$
Notons alors $T$ le conoyau
$$\bigoplus_{\atop{a:~\overline{1_h} \subset a}{c \not \subset a}} \lexp p j^{=h+g}_{a,!*} j^{=h+g,*}_a P_F \htarrow_+ Q_{F,\neq c} \twoheadrightarrow T,$$
et remarquons que pour tout point fermÈ $z$ de 
$X^{\geq h+1}_{\IC,\bar s,c}$, avec 
$i_z:\overline{ \{ z \} } \hookrightarrow X_{\IC,\bar s}$, on a
$\lexp p h^0 i_z^* Q_{F,\neq c}=0$ et donc $\lexp p h^0 i_z^* T=0$. 
Avec la notation suivante, on peut alors Ècrire 
$Q_{F,\neq c}$ sous la forme 
$$Q_{F,\neq c} \simeq \bigoplus_{\atop{a:~\overline{1_h} \subset a}{c \not \subset a}} \lexp {p(c)} j^{=h+g}_{a,!*} j^{=h+g,*}_a P_F$$
que l'on notera plus simplement encore
$Q_{F,\neq c} \simeq \lexp {p(c)} j^{=h+g}_{\neq c,!*} j^{=h+g,*}_{\neq c} P_F$.

\begin{nota} \label{nota-jpc}
Pour $\LC$ un systËme local sur une strate pure
$X^{=h+g}_{\IC,\bar s,a}$ avec $c \not \subset a$,
la notation $\lexp {p(c)} j^{=h+g}_{a,!*} \LC[d-h-g]$ dÈsignera
une extension intermÈdiaire 
$$\lexp {p} j^{=h+g}_{a,!*} \LC[d-h-g] \htarrow_+ \lexp {p(c)} j^{=h+g}_{a,!*} \LC[d-h-g]$$
dont le conoyau $T$ vÈrifie la propriÈtÈ suivante. Pour tout point $z$ de
$X^{\geq h+1}_{\IC,\bar s,c}$, avec $i_z:\overline{ \{ z \} } \hookrightarrow X_{\IC,\bar s}$, on a 
$\lexp p h^0 i_z^* T=0$. 
\end{nota}

 On introduit alors le poussÈ en avant
$$\xymatrix{
& j^{=h}_{\neq c,!} j^{=h,*}_{\neq c} P_F \ar@{^{(}->}[r] \ar@{->>}[d] & P_F \ar@{->>}[r] \ar@{-->>}[d] &
i^{h+1}_{c,*} i^{h+1,*}_c P_F \ar@{=}[d] \\
\lexp {p(c)} j^{=h}_{\neq c,!*} j^{=h,*}_{\neq c} Q_F & Q_{F,\neq c} \ar@{=}[l] \ar@{^{(}-->}[r] & Q_F \ar@{->>}[r] & 
i^{h+1}_{c,*} i^{h+1,*}_c P_F.
}$$

Du lemme  \ref{lem-trivial}, l'isomorphisme  (\ref{eq-iso-p}) dÈcoule alors
de la proposition \ref{prop-extind} qui sera montrÈe plus loin. Supposons
ainsi dans la fin de cette section que  (\ref{eq-iso-p}) est vÈrifiÈe.

\begin{prop} \label{prop-fc-fpht}
Soient $\pi_v$ une reprÈsentation irrÈductible cuspidale de $GL_g(F_v)$ et $1 \leq t \leq \frac{d}{g}$, alors
les faisceaux de cohomologie de $\lexp p j^{=tg}_{!*} HT(\pi_v,\Pi_t)$ sont sans torsion.
\end{prop}

\begin{coro} \label{coro-fill-j}
Pour toute reprÈsentation irrÈductible cuspidale $\pi_v$ de $GL_g(F_v)$ et pour tout $1 \leq t \leq d/g$,
les graduÈs de la filtration de stratification exhaustive, cf. (\ref{eq-fill}), de
$j^{=tg}_! HT(\pi_v,\Pi_t)$ sont les $\lexp p j^{=(t+i)g}_{!*} HT(\pi_v,\Pi_t \{ - \frac{i}{2} \} \times \st_i(\pi_v)
\{ \frac{t}{2} \}) (\frac{t}{2})$.
\end{coro}

\rem le rÈsultat est dÈmontrÈ dans \cite{boyer-invent2} sur $\overline \Qm_l$, l'apport de ce
nouvel ÈnoncÈ est le fait que sur $\overline \Zm_l$, ce sont les $p$-extensions intermÈdiaires qui
interviennent.

\begin{proof}
Le cas $i=0$ est trivial puisque
$j^{=tg}_! HT(\pi_v,\Pi_t) \twoheadrightarrow \lexp p j^{=tg}_! HT(\pi_v,\Pi_t)$. Dans le cas gÈnÈral,
on se ramËne trivialement ‡ $j^{=tg}_{\overline{1_{tg}},!} HT_{\overline{1_{tg}}}(\pi_v,\Pi_t)$ et ‡ la proposition
\ref{prop-extind} qui traite le cas $i=1$. ConcrËtement soit tout d'abord
$$0 \rightarrow P_i \longrightarrow j^{=tg}_{\overline{1_{tg}},!} HT_{\overline{1_{tg}}}(\pi_v,\Pi_t) \longrightarrow P'_i \rightarrow 0,$$
tel que\footnote{Pour $i=1$ on a notÈ prÈcÈdemment $P_1=P_F$.} 
$P_i \twoheadrightarrow Q_i$ o˘ 
$$Q_i \otimes_{\overline \Zm_l} \overline \Qm_l \simeq 
\lexp p j^{=(t+i)g}_{\overline{1_{tg}},!*}
HT_{\overline{1_{tg}},\overline \Qm_l}(\pi_v,\Pi_t \{ i \frac{(g-1)}{2} \} \times \st_i(\pi_v) \{ -t\frac{(g-1)}{2} \}) (\frac{t}{2}).$$
Comme dans le lemme \ref{lem-HT1}, 
$i^{h+1,*}_c P_i \hookrightarrow i^{h+1,*}_c P_1$ est un faisceau pervers libre, ce qui fournit
une suite exacte analogue ‡ (\ref{eq-sec-fc2})
$$0 \rightarrow j^{=h}_{\neq c,!} j^{=h,*}_{\neq c} P_i \longrightarrow P_i \longrightarrow 
i^{h+1}_{c,*} i^{h+1,*}_c P_i \rightarrow 0,$$
avec
$$0 \rightarrow \lexp {p(c)} j^{=(h+i)g}_{\neq c,!*} j^{=(h+i)g,*}_{\neq c} Q_i \longrightarrow  Q_i \longrightarrow
i^{h+1}_{c,*} i^{h+1,*}_c Q_i \rightarrow 0,$$
et le rÈsultat dÈcoule de la proposition \ref{prop-extind}.
\end{proof}


\begin{coro} \label{coro-j-c}
Le lemme \ref{lem-j-c} est valable sur $\overline \Zm_l$.
\end{coro}

Il reste alors ‡ prouver  (\ref{eq-iso-p}), ce qui est le but des paragraphes suivants.

\subsection{Extensions entre $\overline \Zm_l$-faisceaux pervers libres}

Etant donnÈs deux systËmes locaux $\LC_1, \LC_2$ sur un ouvert $j:U \hookrightarrow X$, 
il est bien connu qu'il n'y a pas,
sur $\overline \Qm_l$, d'extensions entre leurs extensions intermÈdiaires, i.e. toute suite exacte courte
$$0 \rightarrow \lexp p j_{!*} \LC_1 \otimes_{\overline \Zm_l} \overline \Qm_l [\dim X] \longrightarrow 
P_{\overline \Qm_l} 
\longrightarrow \lexp p j_{!*} \LC_2\otimes_{\overline \Zm_l} \overline \Qm_l [\dim X]
\rightarrow 0$$
est nÈcessairement scindÈe. Sur $\overline \Zm_l$, la propriÈtÈ subsiste pourvu qu'on prenne
les $p$ (resp. les $p+$) extensions intermÈdiaires pour $\LC_1$ et $\LC_2$ mais sinon il peut
y avoir de telles suites exactes courtes non scindÈes. C'est ce phÈnomËne que l'on veut Ètudier
dans ce paragraphe. ConcrËtement soient $A_1$ et $A_2$ des faisceaux pervers libres et $A$ 
une extension
$$0 \rightarrow A_1 \longrightarrow A \longrightarrow A_2 \rightarrow 0,$$
que l'on suppose scindÈe sur $\overline \Qm_l$. Notons alors $A'_2$ le tirÈ en arriËre
$$\xymatrix{
A'_2 \ar@{^{(}-->}[r] \ar@{^{(}-->}[d] & A \ar@{^{(}->}[d] \\
A_2\otimes_{\overline \Zm_l} \overline \Qm_l  \ar@{^{(}->}[r] &  A  \otimes_{\overline \Zm_l} \overline \Qm_l 
}$$
de sorte que
\addtocounter{smfthm}{1}
\begin{equation} \label{eq-prop-extension}
\xymatrix{
 & A_1 \ar@{^{(}->}[d] \ar@{=}[r] & A_1 \ar@{^{(}->}[d] \\
 A'_2 \ar@{^{(}->}[r] \ar@{=}[d] & A \ar@{->>}[r] \ar@{->>}[d] & A'_1 \ar@{->>}[d]  \\
 A'_2 \ar@{^{(}->}[r] & A_2 \ar@{->>}[r] & T  \\
}
\end{equation}
En particulier $T$ est nul si et seulement si l'extension $A$ est scindÈe, i.e. $A'_i \simeq A_i$ 
pour $i=1,2$.

\begin{lemm} \label{lem-ext-scinde0}
Avec les notations et les hypothËses ci-avant, on suppose en outre que
pour $k=1, 2$, alors $A_k \simeq i_{k,*} \lexp {p+} j_{k,!*} \LC_k[d_k]$ 
o˘ $j_k: U_k \hookrightarrow \overline U_k$ est l'immersion ouverte dans son adhÈrence
de dimension $d_k$, $i_k:\overline U_k \hookrightarrow X$ et $\LC_k$ un systËme local.
Si $T \neq 0$ alors pour tout point fermÈ $i_z: z \hookrightarrow U_1$, on a $i_z^* T \neq 0$.
\end{lemm}

\begin{proof}
On reprend le diagramme prÈcÈdent avec $A_1=i_{1,*} \lexp {p+} j_{1,!*} \LC_1[d_1]$ soit
$$0 \rightarrow T[-1] \longrightarrow i_{1,*} \lexp {p+} j_{1,!*} \LC_1[d_1] 
\longrightarrow A'_1 \rightarrow 0,$$
de sorte que, par dÈfinition de la $t$-structure $p+$, $T$ ne peut pas Ítre ‡ support dans
$\overline U_1 \backslash U_1$, et nÈcessairement le conoyau $i_1^* j_1^* T$ de
$\LC_1 \hookrightarrow i_1^* j_1^* A'_1$  est non nul, d'o˘ le rÈsultat.
\end{proof}

\subsection{Faisceaux de cohomologie des $p$-faisceaux pervers d'Harris-Taylor}

La problÈmatique dÈcrite au paragraphe prÈcÈdent est en gÈnÈral difficile ‡ contrÙler. 
Dans notre situation, les arguments reposent
\begin{itemize}
\item d'une part sur l'utilisation des foncteurs $j^{=h}_{\neq c,!*} j^{=h,*}_{\neq c}$, dont le lecteur
pourra trouver au \S \ref{para-comple} les effets sur les $\overline \Qm_l$ faisceaux pervers 
d'Harris-Taylor,

\item et d'autre part sur la thÈorie des reprÈsentations du groupe mirabolique et de leurs dÈrivÈes
d'aprËs \cite{zelevinski1} et \cite{vigneras-livre} pour son adaptation aux corps finis.
\end{itemize}

\begin{prop} \label{prop-extind}
Soit $Q$ un $\overline \Zm_l$-faisceau pervers libre qui est $P_{h,d-h}(F_v)$-Èquivariant et tel que
\begin{itemize}
\item $Q \otimes_{\overline \Zm_l} \overline \Qm_l \simeq \lexp p j^{=h+g}_{\overline{1_h},!*} 
HT_{\overline{1_h}}(\pi_v,\Pi_h \otimes \pi_v)$ avec $\pi_v$ une reprÈsentation irrÈductible
cuspidale de $GL_g(F_v)$ et $\Pi_h$ une reprÈsentation quelconque de $GL_h(F_v)$;

\item $Q$ s'Ècrit comme une extension
$0 \rightarrow Q_{\neq c} \longrightarrow Q \longrightarrow Q_c \rightarrow 0$ o˘
$X^{\geq h+1}_{I,\bar s,c}$ est une strate pure contenue dans $X^{\geq h}_{I,\bar s,\overline{1_h}}$,
avec $Q_{\neq c} \simeq \lexp {p(c)} j^{=h+g}_{\neq c,!*} j^{=h+g,*}_{\neq c} Q$ et $Q_c$ libre.
\end{itemize}
Alors la suite exacte ci-dessus est scindÈe et $Q \simeq \lexp p j^{=h+g}_{\overline{1_h},!*} 
j^{=h+g,*}_{\overline{1_h}} Q$.
\end{prop}
%

\begin{proof}
Supposons dans un premier temps que la suite exacte est scindÈe et considÈrons une strate pure 
$X^{=h+g}_{\IC,\bar s,a}$ telle que $c \not \subset a$. Notons $Q_a$ le quotient de $Q$ tel que
$\lexp {p} j^{=h+g}_{a,!*} j^{=h+g,*}_a Q \htarrow_+ Q_a$. La suite exacte de l'ÈnoncÈ Ètant scindÈe
on en dÈduit que $Q_a \simeq \lexp {p(c)} j^{=h+g}_{a,!*} j^{=h+g,*}_a Q$. Par Èquivariance,
l'isomorphisme prÈcÈdent est valable pour tout $c \not \subset a$ de sorte que
$Q_a \simeq \lexp {p} j^{=h+g}_{a,!*} j^{=h+g,*}_a Q.$
Soit alors $\AC$ maximal tel qu'il existe un Èpimorphisme
$$Q \twoheadrightarrow \bigoplus_{a \in \AC} \lexp {p} j^{=h+g}_{a,!*} j^{=h+g,*}_a Q=:Q_\AC,$$
et notons $Q^\AC$ son noyau. Supposons par l'absurde qu'il existe $a' \not \in \AC$ avec
$j^{=h+g,*}_{a'} Q \neq 0$. On a alors
$$\xymatrix{
Q^\AC \ar@{^{(}->}[r] \ar[dr] & Q \ar@{->>}[r] \ar@{->>}[d] & Q_\AC \\
& Q_{a'}.
}$$
Comme $Q^\AC \rightarrow \lexp {p} j^{=h+g}_{a',!*} j^{=h+g,*}_{a'} Q$ est non nulle et que son conoyau
est ‡ support dans $X^{\geq h+g+1}_{\IC,\bar s}$, cette application est nÈcessairement surjective
ce qui contredirait la maximalitÈ de $\AC$.

Montrons ‡ prÈsent que la suite exacte de l'ÈnoncÈ est scindÈe. On raisonne par l'absurde en supposant
que le conoyau $T$ de $\lexp p j^{=h+g}_{\overline{1_h},!*} j^{=h+g,*}_{\overline{1_h}} Q \htarrow Q$,
est non nul et donc, par hypothËse, ‡ support dans $X^{\geq h+g}_{I,\bar s,c}$. Soit
$T[l]$ sa $l$-torsion qui est donc un $\overline \Fm_l$-faisceau pervers $P_{h,d-h}(F_v)$-Èquivariant, 
et on choisit $T_0 \hookrightarrow T[l]$ de sorte que $T_0$ soit $P_{h,d-h}(F_v)$-Èquivariant et simple. 

Notons que $T \hookrightarrow \overline T$ o˘ $\bar T$ est le conoyau de
$\lexp p j^{=h+g}_{\overline{1_h},!*} j^{=h+g,*}_{\overline{1_h}} Q \htarrow 
\lexp {p+} j^{=h+g}_{\overline{1_h},!*} j^{=h+g,*}_{\overline{1_h}} Q$ 
que l'on peut aussi Ècrire sous la forme
\addtocounter{smfthm}{1}
\begin{equation} \label{eq-sec-induites}
0 \rightarrow \overline T_{\neq c} \longrightarrow \overline T \longrightarrow 
\overline T_c \rightarrow 0
\end{equation}
o˘ $\overline T_{\neq c}$ (resp. $\overline T_c$) est le conoyau de 
$$\lexp p j^{=h+g}_{\neq c,!*} j^{=h+g,*}_{\neq c} Q 
\htarrow \lexp {p+} j^{=h+g}_{\neq c,!*} j^{=h+g,*}_{\neq c} Q,$$
(resp. de $\lexp p j^{=h+g}_{c,!*} j^{=h+g,*}_{c} Q  \htarrow 
\lexp {p+} j^{=h+g}_{c,!*} j^{=h+g,*}_{c} Q$).
ConcrËtement, supposons pour simplifier les notations que $c=\overline{1_{h+1}}$, et soit
$$\lexp p j^{=h+g}_{\overline{1_{h+g}},!*} j^{=h+g,*}_{\overline{1_{h+g}}} Q \htarrow_+
\lexp {p+} j^{=h+g}_{\overline{1_{h+g}},!*} j^{=h+g,*}_{\overline{1_{h+g}}} Q \twoheadrightarrow
\overline T_1.$$
Alors $\overline T_1[l]$, muni de son action de $P_{h,g,d-h-g}(F_v)$ 
admet une filtration avec pour graduÈs 
$$\ind_{P_{h,g,\delta_k,d-h-g-\delta_k}(F_v)}^{P_{h,g,d-h-g}(F_v)} \lexp p j_{k,!*} \LC_k[d_k]$$ 
o˘ $\LC_k$ est un $\overline \Fm_l$-systËme local sur 
$U_k \subset X^{=h+g+\delta_k}_{\IC,\bar s,\overline{1_{h+g+\delta_k}}}$ 
avec $j_k:U_k \hookrightarrow \overline U_k \hookrightarrow X_{\IC,\bar s}^{\geq 1}$, 
$d_k:=\dim U_k \leq d-h-g-\delta_k$. Pour $z_k$ un point gÈnÈrique de $U_k$, 
en utilisant la notation \ref{nota-ind},
$\ind z_k^* \LC_k$ en tant que $\overline \Fm_l$-reprÈsentation du sous-groupe 
de Levi de $P_{h,g,\delta_k}(F_v)$, s'Ècrit 
sous la forme $r_l(\Pi_h) \otimes r_l(\pi_v) \{ r_k \} \otimes \bar \pi_k$.

En ce qui concerne $\overline T[l]$, il suffit alors d'induire de $P_{h,g,d-h-g}(F_v)$ ‡
$P_{h,d-h}(F_v)$. ConsidÈrons ainsi un point gÈnÈrique $z$ 
d'un des $U_k \hookrightarrow X^{=h+g+\delta}_{\IC,\bar s,\overline{1_{h+g+\delta}}}$, avec 
$\delta=\delta_k$ et tels que $d_k=\dim U_k$ est maximal.
Regardons alors simplement l'action infinitÈsimale de $P_{h,g+\delta}(F_v)$ sur la fibre en $z$, ou plutÙt avec la notation \ref{nota-ind}, sur $\ind z^*$.

La filtration de $\overline T_1[l]$ ci-avant fournit alors
une filtration de $\ind z^* \overline T[l]$ dont les graduÈs en tant que $\overline \Fm_l$-reprÈsentation
du sous-groupe de Levi de $P_{h,g+\delta_k}(F_v)$, s'Ècrivent sous la forme 
$r_l(\Pi_h) \otimes \bigl ( r_l(\pi_v) \{ r_k \} \times \bar \pi_k)$.
La suite exacte courte (\ref{eq-sec-induites}) fournit alors
sur chacun de ces graduÈs, une suite exacte courte $M_{1g+\delta}(F_v)$-Èquivariante
$$0 \rightarrow r_l(\pi_v\{ r_k \} ) \times (\bar \pi_i)_{|M_{\delta}(F_v)} \longrightarrow
\bigl ( r_l(\pi_v\{ k \} ) \times \bar \pi_i \bigr )_{|M_{g+\delta}(F_v)} \longrightarrow 
r_l(\pi_v\{ r_k \} )_{|M_{g}(F_v)} \times \bar \pi_i  \rightarrow 0$$
o˘ la premiËre induite ‡ gauche (resp. celle de droite) est l'induite de
$\left ( \begin{array}{ccc} 1 & 0 & * \\ 0 & GL_g & * \\ 0 & 0 & * \end{array} \right )$
(resp. de $M_{g,\delta}(F_v)$ dÈfini comme le sous-groupe de $P_{1,g-1,\delta}(F_v)$ dont le
coefficient en haut ‡ gauche est Ègal ‡ $1$) ‡ $M_{g+\delta}(F_v)$.
En particulier $T_0 \hookrightarrow \overline T[l]$ se factorise par un de ces graduÈs avec
$\lexp p h^0 \ind z^* T_0 \hookrightarrow r_l(\Pi_h) \otimes \bigl ( r_l(\pi_v\{r_k \})_{|M_{g}(F_v)} 
\times \bar \pi_k)$.

ConsidÈrons alors le tirÈ en arriËre $Q_0$
$$\xymatrix{
\lexp p j^{=h+g}_{\overline{1_h},!*} j^{=h+g,*}_{\overline{1_h}} Q \ar@{^{(}->}[r] & Q \ar@{->>}[r] & T \\
\lexp p j^{=h+g}_{\overline{1_h},!*} j^{=h+g,*}_{\overline{1_h} }Q_0 \ar@{=}[u] \ar@{^{(}->}[r]
& Q_0 \ar@{^{(}-->}[u]
\ar@{-->>}[r] & T_0 \ar@{^{(}->}[u]
}$$
Par composition des monomorphismes stricts, on a encore
\addtocounter{smfthm}{1}
\begin{equation} \label{eq-sec-induites0}
0 \rightarrow \lexp {p(c)} j^{=h+g}_{\neq c,!*} j^{=h+g,*}_{\neq c} Q_0 \longrightarrow Q_0
\longrightarrow Q_{0,c} \rightarrow 0
\end{equation}
avec $\lexp p h^0 i_z^* Q_0=\lexp p h^0 i_z^* Q_{0,c}=\lexp p h^0 i_z^* T_0$.
En outre $z$ Ètant choisi dans $X^{\geq h+1}_{I,\bar s,c}$, on Ècrit
$\lexp p h^0 \ind z^* T_0$ sous la forme
$r_l(\Pi_h) \otimes \theta$ o˘ $\theta$ est une reprÈsentation irrÈductible de $GL_{g+\delta}(F_v)$
$$\xymatrix{
& \theta \ar@{^{(}->}[d] \ar@{^{(}->}[dr] \\
r_l(\pi_v\{ r_k \} ) \times (\bar \pi_k)_{|M_{\delta}(F_v)} \ar@{^{(}->}[r] &
\bigl ( r_l(\pi_v\{ r_k \} ) \times \bar \pi_k \bigr )_{|M_{g+\delta}(F_v)} \ar@{->>}[r] & 
r_l(\pi_v\{ r_k \})_{|M_{g}(F_v)} \times \bar \pi_k.
}$$
On utilise ‡ prÈsent la thÈorie des reprÈsentations du groupe $M_{g+\delta}(F_v)$ de \cite{zelevinski1}
dans le cas complexe et \cite{vigneras-livre} chapitre III \S 1 sur $\overline \Fm_l$.
En particulier d'aprËs \cite{vigneras-livre} III 1.10, pour $\bar \pi_k$ irrÈductible, 
$r_l(\pi_v\{ r_k \} )_{|M_{g}(F_v)} \times \bar \pi_k$
admet une unique dÈrivÈe d'ordre $>0$ non nulle de sorte que d'aprËs \cite{vigneras-livre} III.1.5,
c'est une reprÈsentation irrÈductible de $M_{g+\delta}(F_v)$. Ainsi on a
$$\theta \simeq r_l(\pi_v\{ r_k \} )_{|M_{g}(F_v)} \times \bar \pi_k$$
et $Q_{0,c} \simeq \ind_{M_{g,\delta}(F_v)}^{M_{g+\delta}(F_v)} Q_{\overline{1_{h+g}}}$
o˘ $\lexp p j^{=h+g}_{\overline{1_{h+g}},!*} j^{=h+g,*}_{\overline{1_{h+g}}} Q_0 \htarrow_+
Q_{\overline{1_{h+g}}}$ de conoyau $T_{0,\overline{1_{h+g}}}$ non nul.
On utilise alors que l'extension (\ref{eq-sec-induites0}) est scindÈe sur $\overline \Qm_l$ pour
Ècrire $Q_0$ sous la forme
$$0 \rightarrow \widetilde Q_{0,c} \longrightarrow Q_0 \longrightarrow
Q_{0,\neq c}  \rightarrow 0,$$
avec $\lexp p j^{=h+g}_{c,!*} j^{=h+g,*}_c Q_0 \htarrow_+ \widetilde Q_{0,c}
\htarrow_+ Q_{0,c}$. Sachant que le conoyau $T_0$ de
$\lexp p j^{=h+g}_{c,!*} j^{=h+g,*}_c Q_0 \htarrow_+ Q_{0,c}$ est simple,
on en dÈduit $\widetilde Q_{0,c} \simeq \lexp p j^{=h+g}_{c,!*} j^{=h+g,*}_c Q_0$.

D'aprËs (\ref{eq-prop-extension}), si l'extension  (\ref{eq-sec-induites0}) n'est pas scindÈe,
on a une flËche $P_{h,1,g+\delta-1,d-h-g-\delta}(F_v)$-Èquivariante non nulle
$T_0 \longrightarrow T_{\neq c}$ o˘ $T_{\neq c}$ est le conoyau de 
$\lexp {p(c)} j^{=h+g}_{\neq c,!*} j^{=h+g,*}_{\neq c} Q_0 \htarrow_+ Q_{0,\neq c}$, i.e. une injection
\addtocounter{smfthm}{1}
\begin{equation} \label{eq-casi0}
r_l(\pi_v\{ r_k \})_{|M_{g}(F_v)} \times \bar \pi_k \hookrightarrow 
r_l(\pi_v\{ r_k \}) \times (\bar \pi)_{|M_{\delta}(F_v)},
\end{equation}
o˘ $\bar \pi$ est une reprÈsentation irrÈductible de $GL_\delta(F_v)$ qu'il
est inutile de prÈciser plus.

On reprend alors la preuve du thÈorËme 4.11 de \cite{zelevinski1} p458, cf. \cite{vigneras-livre}
pour la justification que les arguments de loc. cit. sont valables sur $\overline \Fm_l$. On notera aussi que notre mirabolique n'est pas celui de loc. cit., on
passe de nos conventions ‡ celle de loc. cit. en tordant les actions par
$g \mapsto \sigma (\lexp t g^{-1}) \sigma$ o˘ $\sigma$ est la
matrice de permutation associÈ au $n$-cycle $(12\cdots n)$, cf.
\cite{boyer-FT} preuve du lemme 4.4.

On introduit, d'aprËs \cite{zelevinski1} 3.2 p450, les foncteurs
$$\Psi^-: \alg(M_n(F_v)) \longrightarrow \alg(GL_{n-1}(F_v), \qquad 
\Phi^-: \alg (M_n(F_v)) \longrightarrow \alg (M_{n-1}(F_v))$$
dÈfinis par $\Psi^-=r_{V_n,1}$ (resp. $\Phi^-=r_{V_n,\theta}$) le foncteur des $V_{n}$
coinvariants (resp. $(V_{n},\psi)$-coinvariants), cf. \cite{zelevinski1} 1.8, o˘
$$V_n(F_v)=\{ (m_{i,j} \in P_n(F_v):~m_{i,j}= \delta_{i,j} \hbox{ for } j < n \}$$ 
dÈsigne le radical unipotent de $M_n(F_v)$ et  o˘ $\psi$ est un caractËre 
additif non trivial de $F_v$ qu'on prolonge sur le radical unipotent de $M_{n}(F_v)$.
%
%

\begin{prop} ( cf. \cite{zelevinski1} 4.13)
Pour $\rho \in \alg (GL_r(K))$, $\sigma \in \alg(GL_t(K))$ et 
$\tau \in \alg(M_s(K))$,
on a les propriÈtÈs suivantes.
\begin{itemize}
\item[(a)] Dans $\alg(M_{r+t}(K)$, on a la suite exacte courte dÈj‡ rencontrÈe
$$0 \rightarrow \rho \times (\sigma_{|M_t(K)}) \longrightarrow 
(\rho \times \sigma)_{|M_{r+t}(K)} \longrightarrow 
 (\rho_{|M_r(K)}) \times \sigma
\rightarrow 0.$$

\item[(b)] Pour $\Omega=\Psi^-$ ou $\Phi^-$,  on a
$\Omega( \tau \times \rho) \simeq  \Omega(\tau) \times \rho$.

\item[(c)] En ce qui concerne la \og grosse \fg{} induite, on a
$\Psi^-(\rho \times \tau) \simeq \rho \times \Psi^-(\tau)$ et
$$0 \rightarrow \rho \times \Phi^-(\tau) \longrightarrow \Phi^-(\rho \times \tau)
\longrightarrow  (\rho_{|M_r(K)}) \times \Psi^-(\tau) \rightarrow 0.$$

\item[(d)] Supposons $r>0$, alors pour tout sous-$M_{s+t}(K)$-module non nul,
$\omega \subset \rho \times \tau$, on a $\Phi^-(\omega) \neq (0)$.
\end{itemize}
\end{prop}

Rappelons en outre que pour toute reprÈsentation irrÈductible $\tau$ de
$M_n(F_v)$ alors exactement un parmi $\Phi^-(\tau)$ et $\Psi^-(\tau)$
est non nul et on calcule la dÈrivÈe $k$-Ëme de $\tau$ par la formule 
$\tau^{(k)}=\Psi^- (\Phi^-)^{k-1} (\tau)$. Rappelons que $\tau$ a une unique
dÈrivÈe non nulle qui est irrÈductible; si $k$ est l'ordre alors pour tout $i \leq k-1$,
$(\Phi^-)^i (\tau)$ est aussi irrÈductible.

\begin{lemm} Pour tout $0 \leq i \leq g-1$, on a
$$(\Phi^-)^{i} \bigl ( r_l(\pi_v\{ r_k \})_{|P_{1,g-1}(F_v)} \times \bar \pi_k \bigr ) = 
(\Phi^-)^{i} \bigl ( r_l(\pi_v\{ r_k \})_{|P_{1,g-1}(F_v)} \bigr ) \times \bar \pi_k 
\neq (0)$$
et 
$$(\Phi^-)^{i} \bigl ( r_l(\pi_v\{ r_k \})_{|P_{1,g-1}(F_v)} \bigr ) \times \bar \pi_k \bigr ) \hookrightarrow r_l(\pi_v\{ r_k \}) \times (\Phi^-)^{i} 
\bigl ( (\bar \pi)_{|P_{1,\delta-1}(F_v)} \big ).$$
\end{lemm}

\begin{proof}
La premiËre partie dÈcoule du point b) de la proposition prÈcÈdente
et du fait que $r_l(\pi_v)$ Ètant cuspidal, sa restriction ‡ $P_{1,g-1}(F_v)$
est irrÈductible, isomorphe ‡ la reprÈsentation mirabolique.

Pour ce qui concerne la deuxiËme assertion,
on raisonne par rÈcurrence sur $i$ en partant de $i=0$ qui correspond
‡ (\ref{eq-casi0}). Supposons donc le rÈsultat acquis au rang $i$ de sorte
qu'en appliquant le foncteur exact $\Phi^-$ on obtient
$$(\Phi^-)^{i+1} \bigl ( r_l(\pi_v\{ r_k \})_{|P_{1,g-1}(F_v)} \bigr ) \times \bar \pi_k \bigr ) \hookrightarrow\Phi^- \Bigl (  r_l(\pi_v\{ r_k \}) \times (\Phi^-)^{i} 
\bigl ( (\bar \pi)_{|P_{1,\delta-1}(F_v)} \big ) \Bigr ),$$
o˘ le terme de gauche est irrÈductible. 
Si la composÈe de l'injection prÈcÈdente avec la surjection du point c) de la 
proposition prÈcÈdente
$$\Phi^- \Bigl (  r_l(\pi_v\{ r_k \}) \times (\Phi^-)^{i} 
\bigl ( (\bar \pi)_{|P_{1,\delta-1}(F_v)} \big ) \Bigr ) \twoheadrightarrow 
 r_l(\pi_v\{ r_k \})_{P_{1,g-1}(F_v)} \times \Psi^- \Bigl ( (\Phi^-)^{i} 
\bigl ( (\bar \pi)_{|P_{1,\delta-1}(F_v)} \big ) \Bigr )$$
Ètait non nulle alors alors
$$(\Phi^-)^{i+1} \bigl ( r_l(\pi_v\{ r_k \})_{|P_{1,g-1}(F_v)} \bigr ) \times \bar \pi_k \bigr ) \hookrightarrow  
r_l(\pi_v\{ r_k \})_{P_{1,g-1}(F_v)} \times \Psi^- \Bigl ( (\Phi^-)^{i} 
\bigl ( (\bar \pi)_{|P_{1,\delta-1}(F_v)} \big ) \Bigr )$$
et donc par exactitude de $\Phi^-$ et $\Psi^-$, le foncteur
$\Psi^- \circ (\Phi^-)^{g-i-2} $ appliquÈ au membre de droite serait non nul, i.e.
$$\Psi^- \circ (\Phi^-)^{g-i-2} \Bigr ( r_l(\pi_v\{ r_k \})_{P_{1,g-1}(F_v)} \Bigr )
\times \Psi^- \Bigl ( (\Phi^-)^{i} \bigl ( (\bar \pi)_{|P_{1,\delta-1}(F_v)} \big ) \Bigr )
\neq (0),$$
alors que $\Psi^- \circ (\Phi^-)^{g-i-2} \Bigr ( r_l(\pi_v\{ r_k \})_{P_{1,g-1}(F_v)} 
\Bigr )=(0)$, d'o˘ la contradiction.

\end{proof}

Ainsi 
$(\Phi^-)^{g-1} \bigl ( r_l(\pi_v\{ r_k \})_{|P_{1,g-1}(F_v)} \times \bar \pi_k \bigr ) \neq (0)$ et
$$
(\Phi^-)^{g-1} \bigl ( r_l(\pi_v\{ r_k \})_{|P_{1,g-1}(F_v)} \times \bar \pi_k \bigr ) \hookrightarrow 
r_l(\pi_v\{ r_k \}) \times (\Phi^-)^{g-1} \bigl ( (\bar \pi_k)_{|P_{1,\delta-1}(F_v)} \big ),
$$
D'aprËs le point (d) de la proposition prÈcÈdente
on a alors
$\Phi^- \bigl ( (\Phi^-)^{g-1} \bigl ( r_l(\pi_v\{ r_k \})_{|P_{1,g-1}(F_v)} \times \bar \pi_k \bigr )=
(\Phi^-)^g \bigl ( r_l(\pi_v\{ r_k \})_{|P_{1,g-1}(F_v)} \times \bar \pi_k \bigr ) \neq 0$, ce qui n'est pas,
d'o˘ la contradiction et donc la suite de l'ÈnoncÈ est bien scindÈe.
\end{proof}

Comme notÈ plus haut, du lemme  \ref{lem-trivial} et de  la proposition prÈcÈdente
on en dÈduit l'isomorphisme  (\ref{eq-iso-p}) et donc aussi
la proposition \ref{prop-fc-fpht}, ‡ savoir que les faisceaux
de cohomologie des $p$-extensions intermÈdiaires des systËmes locaux d'Harris-Taylor, sont
sans torsion. Afin d'Ètudier les faisceaux de cohomologie de $\Psi_\IC$, cf. le \S \ref{para-hyp2}, 
nous utiliserons l'ÈnoncÈ suivant qui n'est qu'une version lÈgËrement modifiÈe de la proposition 
prÈcÈdente. 

\begin{prop} \label{prop-extind2}
Soit $X^{\geq 1}_{\IC,\bar s,c}$ une strate pure et $Q_{\neq c}$
un $\overline \Zm_l$-faisceau pervers libre et $P_c(F_v)$-Èquivariant tel que $Q_{\neq c} 
\otimes_{\overline \Zm_l} \overline \Qm_l \simeq \lexp p j^{=tg}_{\neq c,!*} HT_{\neq c}
(\pi_v,\tau)$ pour $\pi_v$ une reprÈsentation irrÈductible cuspidale de $GL_g(F_v)$ et o˘
$\tau$ est la reprÈsentation mirabolique de $P_{1,tg-1}(F_v)$, i.e. celle dont la seule dÈrivÈe
non nulle est celle d'ordre $tg$. On considËre alors une extension de faisceaux
pervers libre $P_{1,d-1}(F_v)$-Èquivariants:
$$0 \rightarrow Q_{\neq c} \longrightarrow X \longrightarrow P_c \rightarrow 0$$
telle que
\begin{itemize}
\item l'extension est scindÈe aprËs extension des scalaires ‡ $\overline \Qm_l$;

\item avec les notations prÈcÈdentes, 
$Q_{\neq c} \simeq \lexp {p(c)} j^{=tg}_{\neq c,!*} j^{=tg,*}_{\neq c} Q_{\neq c}$;

\item $P_c$ est supportÈ sur une strate $X^{\geq h}_{\IC,\bar s,c}$ avec $h\leq  tg$.
\end{itemize}
Alors l'extension ci-avant est scindÈe.
\end{prop}

\begin{proof}
On raisonne comme dans la preuve de la proposition prÈcÈdente en partant de
$$0 \rightarrow P'_c \longrightarrow X \longrightarrow Q'_{\neq c} \rightarrow 0$$
avec $Q_{\neq c} \htarrow_+ Q'_{\neq c}$. Le conoyau $T'_{\neq c}$ de 
$\lexp p j^{=tg}_{\neq c,!*} j^{=tg,*}_{\neq c} Q_{\neq c} \htarrow_+ Q'_{\neq c}$,
d'aprËs (\ref{eq-prop-extension}), s'identifie avec le conoyau $T_c$ 
de $P'_c \htarrow P_c$.
On dÈcrit alors $T_c[l]$ ‡ l'aide d'une filtration comme ci-avant
o˘ les graduÈs sont de la forme
$i_* \lexp pj_{!*} \LC_{\overline \Fm_l} \otimes ( \bar \pi \times \sigma_{P_{1,h-1}(F_v)})$
et on observe que $ \bar \pi \times \sigma_{P_{1,h-1}(F_v)}$ est de dÈrivÈe
infÈrieure ou Ègale ‡ $h\leq tg$, alors que la dÈrivÈe de tout sous-espace de 
$\bar \pi_{|P_{1,\delta}(F_v)} \times \tau$ est d'ordre strictement supÈrieur ‡ celui de $\tau$,
i.e. $tg$ de sorte que $T_c[l]$ est nÈcessairement trivial, i.e. la suite exacte courte de l'ÈnoncÈ
est scindÈe.
%
\end{proof}

En particulier dans le cas o˘ $h=tg$, on obtient le rÈsultat suivant.

\begin{coro} \label{coro-extind2}
Soit $X^{\geq 1}_{\IC,\bar s,c}$ une strate pure et soit $Q$ un $\overline \Zm_l$-faisceau 
pervers libre, $GL_d(F_v)$-Èquivariant tel que:
\begin{itemize}
\item $Q \otimes_{\overline \Zm_l} \overline \Qm_l \simeq \lexp p j^{=tg}_{!*} 
HT(\pi_v,\st_t(\pi_v))$ pour $\pi_v$ une reprÈsentation irrÈductible
cuspidale de $GL_g(F_v)$;

\item $Q$ s'Ècrit comme une extension de faisceaux pervers libres 
$P_c(F_v)$-Èquivariants
$$0 \rightarrow Q_{\neq c} \longrightarrow Q \longrightarrow Q_c \rightarrow 0$$
avec $Q_{\neq c} \simeq \lexp {p(c)} j^{=tg}_{\neq c,!*} 
j^{=tg,*}_{\neq c} Q_{\neq c}$.
\end{itemize}
Alors la suite exacte ci-dessus est scindÈe.
\end{coro}

\subsection{Sur les extensions intermÈdiaires des systËmes locaux d'Harris-Taylor}
\label{para-ext-pp}
 
Au \S \ref{para-ext-pp-preuve}, nous montrerons le rÈsultat suivant.

\begin{prop} \label{prop-ext-pp}
Soit $\pi_{v,-1}$ une reprÈsentation irrÈductible cuspidale de $GL_g(F_v)$
dont la rÈduction modulo $l$ est supercuspidale, alors, pour tout $1 \leq t \leq \frac{d}{g}$ et pour toute 
reprÈsentation $\Pi_t$ de $GL_{tg}(F_v)$,  le bimorphisme naturel
$$\lexp p j^{=tg}_{!*} HT(\pi_{v,-1},\Pi_t) \htarrow_+
\lexp {p+} j^{=tg}_{!*} HT(\pi_{v,-1},\Pi_t),$$
est un isomorphisme.
\end{prop}

Notons $\varrho$ la rÈduction modulo $l$ d'une telle $\pi_{v,-1}$ 
qui est donc par hypothËse, supercuspidale.
Avec les notations de \ref{nota-taui}, $\pi_{v,-1} \in \scusp_{-1}(\bar \tau)=\scusp_{-1}(\varrho)$
o˘ $\bar \tau$ est la rÈduction modulo $l$ de $\pi_{v,-1}[1]_D$.
Dans ce paragraphe, nous voulons en dÈduire le calcul de la $p$-torsion
du conoyau du bimorphisme ci-avant lorsque $\pi_{v,u} \in \scusp_u(\varrho)$ pour $u \geq 0$.
Avec les notations de \ref{nota-rhoi}, cela signifie que la rÈduction modulo $l$ de $\pi_{v,u}$
est isomorphe ‡ $\rho_u$.

%
%
%
%
%

Soit $\Fm(\bullet):= \bullet \otimes^\Lm_{\overline \Zm_l} \overline \Fm_l$,
le foncteur de rÈduction modulo $l$. Rappelons que 
ce dernier ne commute pas aux foncteurs de troncations et que d'aprËs les Èquations 
2.54-2.61 de \cite{juteau}, on a
$$\Fm \lexp p j_{!*} \rightarrow \lexp p j_{!*} \Fm \rightarrow \hi^{-1} \Fm \lexp p i_*
\lexp p \hi^0_{tors} i^* j_*[1] \leadsto$$
$$\lexp p j_{!*} \Fm \rightarrow \Fm \lexp {p+} j_{!*} \rightarrow \hi^0 \Fm \lexp p i_*
\lexp p \hi^0_{tors} i^*j_* \leadsto$$
En revanche, dans le cas o˘ $\lexp p j_!=\lexp {p+} j_!$, en utilisant 
$$\lexp p j_! \rightarrow \lexp {p+} j_! \rightarrow \lexp p i_* \hi^{-1}_{tors} i^* j_* [1] \leadsto$$
quand $ \lexp p i_* \hi^{-1} i^* j_*$ est libre, alors le triangle distinguÈ
$$\Fm  \lexp p j_! \rightarrow \lexp p j_! \Fm \rightarrow \hi^{-1} \Fm \lexp p i_* \lexp p
\hi^{-1}_{tors}i^*j_* [2] \leadsto$$
nous donne que $\Fm$ et $\lexp p j_!$ commutent.

\begin{prop} \phantomsection \label{prop-dec-pervers} 
Avec les notations prÈcÈdentes et celles de la proposition \ref{prop-defi-Vk},
dans le groupe de Grothendieck des
$\overline \Fm_l$-faisceaux pervers Èquivariants sur $X_{\IC,\bar s}$,  on a l'ÈgalitÈ
\begin{multline*}
\Fm \Bigl ( \lexp p j_{!*}^{= tg_{u}(\varrho)} HT(\pi_{v,u},\Pi_t) \Bigr )=  m(\varrho)l^{u} 
\sum_{r=0}^{s-tm(\varrho)l^u} \lexp p j_{!*}^{= tg_{u}(\varrho)+rg_{-1}(\varrho)} \\
HT \bigl (\varrho,r_l(\Pi_{t}) \overrightarrow{\times} V_{\varrho}(r+tm(\varrho)l^u ,
< \underline{\delta_u}) \bigr ) \otimes \Xi^{r\frac{g-1}{2}}.
\end{multline*}
\end{prop}

\rem dans le groupe de Grothendieck, l'induite $r_l(\Pi_t)  \overrightarrow{\times} 
V_{\varrho_{-1}}(r+tm(\varrho_{-1})l^u ,< \underline{\delta_u})$ n'intervient que par 
sa semi-simplifiÈe. Pour les mÍmes raisons, il est inutile de prÈciser les rÈseaux stables
utilisÈs pour les systËmes locaux de la formule prÈcÈdente.

\begin{proof}
Les cas $tg_u \geq d$ Ètant triviaux, on raisonne par rÈcurrence en supposant le rÈsultat
acquis pour tout $t<t' $ et on traite le cas de $t$. L'idÈe est de partir de la commutation
entre $\Fm$ et les $j^{= tg}_!$:
$$\Fm \bigl ( j_!^{= tg_u(\varrho)} HT(\pi_{v,u},\Pi_{t}) \bigr )= 
\sum_{t'=t}^{s_u} \Fm \bigl ( \lexp p j_{!*}^{=t'g_u(\varrho)} HT(\pi_{v,u},\Pi_t \overrightarrow{\times}
\st_{t'-t}(\pi_{v,u}) ) \bigr ) \otimes \Xi^{\frac{(t'-t)(g_u(\varrho)-1)}{2}}),$$
et, en posant $t(u)=tm(\varrho)l^u$
\addtocounter{smfthm}{1}
\begin{multline} \label{eq-dec-1}
\Fm \bigl ( j_!^{= t(u)g_{-1}(\varrho)} HT(\pi_{v,-1},\Pi_{t}) \bigr )= \\
\sum_{t'=t(u)}^s \Fm \bigl ( \lexp p j_{!*}^{= t'g_{-1}(\varrho)} HT(\pi_{v,-1}, \Pi_{t}) \overrightarrow{\times} 
\st_{t'-t(u)}(\pi_{-1}) \bigr ) \otimes \Xi^{\frac{(t'-t(u))(g_{-1}(\varrho)-1)}{2}}.
\end{multline}
Or d'aprËs \ref{prop-red-tau}, on a
$$\Fm j_!^{= tg_u(\varrho)} HT(\pi_{v,u},\Pi_t) =j_!^{= tg_u(\varrho)} \Fm HT(\pi_{v,u},\Pi_t) =
m(\varrho) l^u j_!^{=t(u)g_{-1}(\varrho)} \Fm HT(\pi_{v,-1},\Pi_t)$$
et d'aprËs l'hypothËse de rÈcurrence, on a
\addtocounter{smfthm}{1}
\begin{multline} \label{eq-dec-2}
\sum_{t'=t+1}^{s_u}
\Fm \Bigl ( \lexp p j_{!*}^{= t'g_u(\varrho)} HT(\pi_{v,u},\Pi_t \overrightarrow{\times} \st_{t'-t}(\pi_u)) \Bigr ) 
\otimes \Xi^{\frac{(t'-t)(g_u(\varrho)-1)}{2}} \\ 
= \sum_{t'=t(u)+1}^{s(u)} \lexp p j_{!*}^{\geq t'g_{-1}(\varrho)}
HT(\varrho,r_l(\Pi_{t}) \overrightarrow{\times} V_{\varrho_{-1}}(t'-t(u),\geq \underline{\delta_u}) ) 
\otimes \Xi^{\frac{(t'-t(u))(g_{-1}(\varrho)-1)}{2}}
\end{multline}
En soustrayant (\ref{eq-dec-2}) ‡ (\ref{eq-dec-1}), on obtient le rÈsultat.
\end{proof}

\rem on notera en particulier que si $\Pi_t$ est un sous-quotient irrÈductible de $\st_t(\pi_{v,u})$ alors 
pour tout sous-quotient irrÈductible $\sigma$ de $V_{\varrho_{-1}}(s(u)-t(u),\geq \underline{\delta_u})$,
l'induite $\Pi_t \times \sigma$ est irrÈductible.

\rem la surjection
$$\lexp p j_{!}^{= t'g_u(\varrho)} ( \Fm HT(\pi_{v,u},\Pi_t{'}))  \twoheadrightarrow
\Fm ( \lexp p j_{!*}^{= t'g_u(\varrho)} HT(\pi_{v,u},\Pi_{t'}))$$
nous donne en outre que la suite des dimensions des
graduÈs de la filtration de stratification exhaustive est strictement croissante ce qui fixe
complËtement cette filtration. En particulier, en notant
$T:=i^{tg}_* \bigl ( \lexp p \hi^0_{tor} i^{tg,*}j^{\geq tg}_*[1] \bigr )$, alors pour tout sous-faisceau pervers 
$T_0$ de la $l$-torsion $T[l]$ de $T$,
il existe un sous-quotient irrÈductible $\sigma$ de 
$V_{\varrho_{-1}}(s(u)-t(u),\geq \underline{\delta_u})$
tel que $T_0$ admet $\lexp p j_{!*}^{= s(u)g_{-1}(\varrho)}
HT(\varrho,r_l(\Pi_{t}) \overrightarrow{\times} \sigma) \otimes \Xi^{\frac{(s(u)-t(u))(g_{-1}(\varrho)-1)}{2}}$
comme sous-objet.

\rem la $l$-torsion du quotient des $p+$ faisceaux pervers d'Harris-Taylor par
leur $p$ version, est complËtement dÈcrit par la combinatoire
de la rÈduction modulo $l$ des reprÈsentations de $GL_d(F_v)$ et de $D_{v,d}^\times$.
L'Ètude de la torsion d'ordre supÈrieure dÈcoulerait selon le mÍme schÈma de dÈmonstration,
de l'Ètude de la rÈduction modulo $l^n$ des reprÈsentations irrÈductibles
de $GL_d(F_v)$ et $D_{v,d}^\times$.

\section{Cycles Èvanescents}
\label{para-ce}

Pour tout $I \in \IC$, le faisceaux pervers des cycles Èvanescents
$R\Psi_{\eta_v,I}(\Lambda[d-1])(\frac{d-1}{2})$ sur $X_{I,\bar s}$ sera notÈ $\Psi_{I,\Lambda}$. Soit alors
$\Psi_{\IC,\Lambda}$ le faisceau pervers de Hecke associÈ sur $X_{\IC,\bar s}$.
Pour $\Lambda=\overline \Zm_l$, on notera plus simplement $\Psi_I$
et $\Psi_\IC$. Avec ces dÈcalages, $\Psi_{\IC,\Lambda}$ est autodual pour
la dualitÈ de Grothendieck-Verdier.

\rem soit, cf. \cite{h-t} III.2, $\LC_\xi$ le systËme local  
attachÈ ‡ une reprÈsentation irrÈductible algÈbrique $\xi$ de $G$ sur $\Lambda$. Alors
$R\Psi_{\eta_v,\IC}(\LC_\xi) \simeq R \Psi_{\eta_v,\IC}(\Lambda) \otimes \LC_\xi$, i.e.
d'un point de vue faisceautique, le rÙle de $\LC_\xi$ est transparent ce qui justifie
de n'Ètudier que le cas $\xi$ trivial.

D'aprËs \cite{boyer-invent2} thÈorËme 2.2.4, cf. aussi la proposition \ref{prop-psi-fil},
les graduÈs de la filtration par les poids de $\Psi_{\IC, \overline \Qm_l}$ sont les
$\PC(t,\pi_v)(\frac{1-t+2k}{2})$ o˘
\begin{itemize}
\item $\pi_v$ dÈcrit les classes d'Èquivalence inertielle des reprÈsentations irrÈductibles cuspidales
de $GL_g(F_v)$ pour $g$ variant de $1$ ‡ $d$,

\item $t$ varie de $1$ ‡ $\lfloor \frac{d}{g} \rfloor$ et $k$ de $0$ ‡ $t-1$.
\end{itemize}

\begin{defi} \label{defi-p-rho}
On dira d'un $\overline \Zm_l$-faisceau pervers libre $P$ tel que
$$P \otimes_{\overline \Zm_l} \overline \Qm_l \simeq \PC(t,\pi_v)(\frac{1-t+2k}{2})$$
comme ci-avant avec $\pi_v \in \scusp_i(\varrho)$, qu'il est un faisceau pervers
d'Harris-Taylor de $\varrho$-type $i$, ou simplement de type $\varrho$ quand on ne souhaitera
pas prÈciser l'indice $i$.
\end{defi}

%
%
%

Le but de cette section est de montrer que les faisceaux de cohomologie $\hi^i \Psi_\IC$
de $\Psi_\IC$ sont
sans torsion. Pour ce faire, comme indiquÈ dans l'introduction, nous allons utiliser une filtration
de $\Psi_\IC$ dont la suite spectrale calculant les $\hi^i \Psi_\IC$ ‡ partir des faisceaux
de cohomologie de ses graduÈs, dÈgÈnËre en $E_1$.

\subsection{DÈcomposition supercuspidale}
\label{para-psi}

\`A l'aide des variÈtÈs d'Igusa de premiËre et seconde espËce, les auteurs de \cite{h-t} p136, associent
‡ toute $\Lambda$-reprÈsentation admissible $\rho_v$  de $\DC_{v,h}^\times$, 
un $\Lambda$-systËme local $\LC_{\Lambda,\overline{1_h}}(\rho_v)$ sur 
$X^{=h}_{\IC,\bar s,\overline{1_h}}$ muni 
d'une action de $G(\Am^{\oo,v}) \times P_{h,d-h}(\OC_v)$, o˘ le deuxiËme facteur agit via
la projection $P_{h,d-h}(\OC_v) \longrightarrow \Zm \times GL_{d-h}(\OC_v)$ comme dans la remarque
prÈcÈdant \ref{nota-1h}. On note alors
$$\LC_{\Lambda}(\rho_v):=\LC_{\Lambda,\overline{1_h}}(\rho_v) \times_{P_{h,d-h}(\OC_v)} 
GL_d(\OC_v)$$
sa version induite sur $X^{=h}_{\IC,\bar s}$, cf. les notations du \S \ref{app-B1}.
Pour $\rho_v$ une reprÈsentation de $D_{v,h}^\times$, on notera $\LC_{\Lambda}(\rho_v)$
pour $\LC_{\Lambda}(\rho_{v,|\DC_{v,h}^\times})$. 

\rem l'action du facteur $GL_h(F_v)$ du sous-groupe de Levi $P_{h,d-h}(F_v)$
est dite \og infinitÈsimale \fg. Pour toute strate pure $X^{=h}_{\IC,\bar s,a}$ et pour
$\LC_{\Lambda,a}(\rho_v)$ le systËme local associÈ, on a aussi une action dite infinitÈsimale
du facteur $GL_h(F_v)$ du sous-groupe de Levi de $P_a(F_v)=aP_{h,d-h}(F_v)a^{-1}$, cf.
le deuxiËme tiret de \ref{nota-va}.

Le dÈcoupage (\ref{eq-decoupageQl}) de $\Psi_{\IC,\overline \Qm_l}$ selon les classes d'Èquivalence 
inertielles des reprÈsentations irrÈductibles cuspidales $\pi_v$ de $GL_g(F_v)$ pour $g$ variant de 
$1$ ‡ $d$, n'est plus valable sur $\overline \Zm_l$. L'idÈe est alors d'utiliser 
la proposition \ref{prop-scindage}. Ainsi ‡ la dÈcomposition
$\rho_v \simeq \bigoplus_{\bar \tau \in \RC_{\bar \Fm_l}(h)} \rho_{v,\bar \tau}$
d'une $\overline \Qm_l$-reprÈsentation entiËre de $D_{v,h}^\times$, selon ses $\bar \tau$-composantes,
on associe
$$\LC_{\overline \Zm_l}(\rho_v) \simeq \bigoplus_{\bar \tau \in \RC_{\bar \Fm_l}(h)} 
\LC_{\overline \Zm_l}(\rho_{v,\bar \tau}).$$

\begin{prop} \label{prop-fbartau} \phantomsection
(cf. \cite{h-t} proposition IV.2.2 et le \S 2.4 de \cite{boyer-invent2}) \\
On a un isomorphisme\footnote{Noter le dÈcalage $[d-1]$ dans la dÈfinition de 
$\Psi_{\IC,\overline \Zm_l}$.} 
$G(\Am^{\oo,v}) \times P_{h,d-h}(F_v) \times W_v$-Èquivariant
$$\ind_{(D_{v,h}^\times)^0 \varpi_v^\Zm}^{D_{v,h}^\times} 
\bigl ( \hi^{h-d-i} \Psi_{\IC,\overline \Zm_l} \bigr )_{|X^{=h}_{\IC,\bar s,\overline{1_h}}} \simeq
\bigoplus_{\bar \tau \in \RC_{ \overline \Fm_l}(h)} \LC_{\overline \Zm_l,\overline{1_h}}
(\UC^{h-1-i}_{\bar \tau,\Nm}),$$
o˘ $\UC^\bullet_{\bar \tau,\Nm}$ est dÈfini en \ref{nota-Utau} et la correspondance entre le
systËme indexÈ par $\IC$ et $\Nm$ est donnÈs par l'application $m_1$ de \ref{nota-m1}.
\end{prop}

\begin{nota} \label{nota-L5}
Pour $\bar \tau \in \RC_{ \overline \Fm_l}(h)$,
on notera $\LC_{\overline \Zm_l,\overline{1_h}}(\bar \tau)$ pour 
$\LC_{\overline \Zm_l,\overline{1_h}} (\UC^{h-1}_{\bar \tau,\Nm,free})$
et $\LC_{\overline \Zm_l}(\bar \tau)$ pour la version induite. 
\end{nota}

\rem on retrouve ces systËmes locaux dans les graduÈs $\gr^h_!(\Psi_\IC)$ de la filtration de stratification
$$j^{=h,*} \gr^h_!(\Psi_{\IC}) \simeq \bigoplus_{\bar \tau \in \RC_{\overline \Fm_l}(h)} 
\LC_{\overline \Zm_l}(\bar \tau).$$

\begin{prop}
On a une dÈcomposition
$$\Psi_\IC \simeq \bigoplus_{g=1}^d \bigoplus_{\varrho \in \scusp_{\overline \Fm_l}(g)} \Psi_{\varrho}$$
o˘ les graduÈs $\gr^h_!(\Psi_{\varrho})$
de la filtration de stratification de $\Psi_{\varrho}$ vÈrifient, cf. la notation \ref{nota-type},
$$j^{=h,*} \gr^h_!(\Psi_{\varrho}) \simeq 
\left \{ \begin{array}{ll} 0 & \hbox{si } g \nmid h \\
\LC_{\overline \Zm_l}(\varrho[t]_D) & \hbox{pour } h=tg, \end{array} \right.$$
et o˘ les constituants irrÈductibles de $\Psi_\varrho \otimes_{\overline \Zm_l} \overline \Qm_l$
sont exactement ceux de $\Psi_\IC  \otimes_{\overline \Zm_l} \overline \Qm_l$ qui sont de type
$\varrho$ au sens de la dÈfinition \ref{defi-p-rho}.
\end{prop}

\begin{proof}
Raisonnons par rÈcurrence sur la filtration de stratification de $\Psi_\IC$
$$0=\Fil^0_!(\Psi_{\IC}) \subset \Fil^1_!(\Psi_{\IC}) \subset \cdots \subset
\Fil^d_{!}(\Psi_{\IC})=\Psi_{\IC}$$
en supposant l'existence d'une dÈcomposition 
$$\Fill^r_!(\Psi_\IC)= \bigoplus_{g=1}^d \bigoplus_{\varrho \in \scusp_{\overline \Fm_l}(g)}  \Fil^r_{!,\varrho}
(\Psi_\IC).$$
Le cas de $r=0$ Ètant clair, supposons le rÈsultat acquis
pour $r-1$ et montrons le pour $r$. De la dÈcomposition 
$j^{= r,*} \gr^r_!(\Psi_\IC) \simeq  \bigoplus_{g|r=tg}
\bigoplus_{\varrho \in \scusp_{F_v}(g)} \LC_{\overline \Zm_l} (\varrho[t]_D)$,
on obtient
$$\gr^r_!(\Psi_\IC) \simeq \bigoplus_{g|r}\bigoplus_{\varrho \in \scusp_{\overline \Fm_l}(g)} 
\gr^r_{!,\varrho} (\Psi_\IC)$$
avec $j^{= r}_! \LC_{\overline \Zm_l}(\varrho[t]_D)[d-r] \twoheadrightarrow \gr^r_{!,\varrho} (\Psi_\IC)$
et o˘ tous les constituants simples de 
$\gr^r_{!,\varrho} (\Psi_\IC)\otimes_{\overline \Zm_l} \overline \Qm_l$ sont de type $\varrho$.

ConsidÈrons alors un diagramme comme (\ref{eq-prop-extension}) o˘ $A_1$ (resp. $A_2$)
est un faisceau pervers d'Harris-Taylor de type $\varrho_1$ (resp. $\varrho_2$) o˘ on suppose
que $\varrho_1$ et $\varrho_2$ ne sont pas dans la mÍme droite de Zelevinsky. L'action
de $W_v$ sur $T[l]$ vu comme quotient de $A'_1$ (resp. de $A_2$) est alors isotypique
relativement ‡ la reprÈsentation galoisienne associÈe ‡ $\varrho_1$ (resp. $\varrho_2$)
par la correspondance de Langlands-Vigneras, d'o˘ la contradiction. Ainsi $\gr^r_{!,\varrho_2}(\Psi_\IC)$
est en somme directe avec $\Fil^{r-1}_{!,\varrho_1}(\Psi_\IC)$ ce qui donne la propriÈtÈ au rang $r$
en faisant varier $\varrho_1$ et $\varrho_2$.
\end{proof}

\rem on peut donc ainsi traiter sÈparÈment chacun des $\Psi_\varrho$, ce qui nous amËne ‡ fixer
pour la suite une telle $\overline \Fm_l$-reprÈsentation irrÈductible supercuspidale $\varrho$.

\marque \textit{Notations}: dans la fin de cette section, nous allons introduire quelques notations utiles
dans les paragraphes suivants.
Avec les notations de \ref{nota-D0}, rappelons que
$g_{-1}(\varrho) ~| ~ g_0(\varrho) ~|~ \cdots | ~g_{s(\varrho)}$. On note alors
$i_{\varrho}(h)$ le plus grand entier $i \geq -1$ tel que $g_i(\varrho)$ divise $h=g_i(\varrho)t_i(\varrho,h)$.
De la formule (\ref{eq-psi-dh}), on a alors
\addtocounter{smfthm}{1}
\begin{equation} \label{eq-Ql}
j^{=h,*} \gr^h_!(\Psi_\varrho) \otimes_{\overline \Zm_l} \overline \Qm_l  \simeq
 \bigoplus_{i=-1}^{i_{\varrho}(h)} \bigoplus_{\pi_v \in \scusp_i(\varrho)}
 j^{=h,*} \PC (t_i(\varrho,h),\pi_v)(\frac{1-t_i(\varrho,h)}{2}).
\end{equation}
Notons alors, pour $k=-i_\varrho(h),\cdots, 1$,
$$\Fil^k_\varrho \bigl ( j^{=h,*} \gr^h_!(\Psi_\varrho) \otimes_{\overline \Zm_l} \overline \Qm_l \bigr ):=
\bigoplus_{i=-i_\varrho(h)}^{k}  
\bigoplus_{\pi_v \in \scusp_i(\varrho)} j^{=h,*} \PC
 (t_i(\varrho,h),\pi_v)(\frac{1-t_i(\varrho,h)}{2})$$
et soit 
$$\xymatrix{
\Fil_\varrho^k \bigl ( j^{=h,*} \gr^h_!(\Psi_\varrho) \bigr ) \ar@{^{(}-->}[r] \ar@{^{(}-->}[d] &
\Fil_\varrho^k \bigl ( j^{=h,*} \gr^h_!(\Psi_\varrho) \otimes_{\overline \Zm_l} \overline \Qm_l \bigr ) 
\ar@{^{(}->}[d] \\
j^{=h,*} \gr^h_!(\Psi_\varrho) \ar@{^{(}->}[r] & 
j^{=h,*} \gr^h_!(\Psi_\varrho) \otimes_{\overline \Zm_l} \overline \Qm_l,
}$$
et $\gr_{\varrho}^k( j^{=h,*} \gr^h_!(\Psi_\varrho) )$ les graduÈs associÈs ‡ cette
$\varrho$-filtration naÔve.
Avec les notations de \ref{nota-grmoins}, on note
$$0 \rightarrow \gr^{h,-}_!(\Psi_\varrho) \longrightarrow \gr^h_!(\Psi_\varrho) \longrightarrow
\gr^{h,+}_!(\Psi_\varrho) \rightarrow 0,$$
tel que $\gr^{h,+}_!(\Psi_\varrho) \otimes_{\overline \Zm_l} \overline \Qm_l \simeq \lexp p j^{=h}_{!*}
j^{=h,*} \gr^h_!(\Psi_\varrho \otimes_{\overline \Zm_l} \overline \Qm_l)$ et on introduit
$$\xymatrix{
& \gr^{h,+}_{!,\geq 0} (\Psi_\varrho) \ar@{^{(}->}[d] \\
j^{=h}_! j^{=h,*} \gr^h_!(\Psi_\varrho) \ar@{->>}[r] \ar@{->>}[d] & \gr^{h,+}_!(\Psi_\varrho) \ar@{-->>}[d] \\
j^{=h}_! \gr^{-1}_\varrho \bigl (j^{=h,*} \gr^h_!(\Psi_\varrho) \bigr ) \ar@{-->>}[r] &
\gr^{h,+}_{!,-1} (\Psi_\varrho).
}$$
Enfin on peut filtrer chacun des $\gr_{\varrho}^k( j^{=h,*} \gr^h_!(\Psi_\varrho) )$ de sorte que
les graduÈs soient des systËmes locaux
d'Harris-Taylor, i.e. des rÈseaux stables de chacun des 
$j^{=h,*} \PC(t_i(\varrho,h),\pi_v)(\frac{1-t_i(\varrho,h)}{2})$,
lesquels dÈpendent, ‡ priori, de tous les choix non naturels faits pour construire cette filtration. 
Ainsi pour $k=-1$, on obtient alors une filtration de $\gr^{h,+}_{!,-1} (\Psi_\varrho)$ dont les graduÈs
seront notÈs $\gr^{h,+}_{!,\pi_v}(\Psi_\varrho)$, pour $\pi_v \in \scusp_{-1}(\varrho)$.

On introduira aussi, pour $i \geq 0$ et $\pi_v \in \scusp_i(\varrho)$, les notations 
$\gr^{h,+}_{!,i}(\Psi_\varrho)$ et $\gr^{h,+}_{!,\pi_v}(\Psi_\varrho)$, qui dÈpendent de
tous les choix faits.

%
%
%
%

\subsection{Preuve de la proposition \ref{prop-ext-pp}}
\label{para-ext-pp-preuve}

Notons
$\bar j: X_{\IC,\bar \eta} \hookrightarrow X_{\IC} \hookleftarrow X_{\IC,\bar s}: \bar i,$
et considÈrons la $t$-structure $p$ sur 
$\overline X_{\IC}:=X_{\IC} \times_{\spec \OC_v} \spec \overline \OC_v$ obtenue en recollant 
$$\Bigl ( \lexp p D^{\leq -1}(X_{\IC,\overline \eta},\overline \Zm_l), 
\lexp p D^{\geq -1}(X_{\IC,\overline \eta},\overline \Zm_l) \Bigr ) \quad \hbox{et} \quad
\Bigl ( \lexp p D^{\leq 0}(X_{\IC,\overline s},\overline \Zm_l), 
\lexp p D^{\geq 0}(X_{\IC,\overline s},\overline \Zm_l) \Bigr ).$$
Les foncteurs $\bar j_!$ et $\bar j_*=\lexp p {\bar j_{!*}}$ sont alors $t$-exacts avec
$$0 \rightarrow \Psi_{\IC} \longrightarrow \bar j_! \overline \Zm_l[d-1](\frac{d-1}{2}) \longrightarrow
\bar j_* \overline \Zm_l[d-1](\frac{d-1}{2}) \rightarrow 0$$
o˘ $\Psi_{\IC}=\lexp p \hi^{-1} \bar i^* \bar j^* \overline \Zm_l[d-1](\frac{d-1}{2})$.

\begin{lemm} Pour $\Lambda= \overline \Zm_l$, 
$\Psi_{\IC, \overline \Zm_l}$ est un objet de $\FC(X_{\IC,\bar s}, \overline \Zm_l)$.
\end{lemm}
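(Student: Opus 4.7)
The plan is to verify that $\Psi_{\IC,\overline\Zm_l}$ lies in $\FC(X_{\IC,\bar s},\overline\Zm_l) = \lexp p \CC(X_{\IC,\bar s},\overline\Zm_l) \cap \lexp{p+}\CC(X_{\IC,\bar s},\overline\Zm_l)$, that is, to check both the ordinary perversity and the ``freeness'' ($p+$-perversity) of the complex of nearby cycles. I first reduce to each finite level $I\in\IC$: the Hecke transition data is automatic because the maps $[g]_{J,I}$ are finite flat and nearby cycles commutes with finite pushforward and smooth pullback, so once each $\Psi_{I,\overline\Zm_l}$ lies in $\FC(X_{I,\bar s},\overline\Zm_l)$ the whole system does.

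Next I invoke the $t$-exactness of the (shifted) nearby cycles functor. By Gabber's theorem, $R\Psi_{\eta_v,I}[-1]$ sends $\lexp p \CC(X_{I,\eta_v},\overline\Zm_l)$ into $\lexp p \CC(X_{I,\bar s},\overline\Zm_l)$. Combined with Juteau's extension of this exactness to the dual $t$-structure (which holds because $R\Psi$ commutes with Verdier duality up to the appropriate twist, and $\lexp{p+}$ is the Verdier dual of $\lexp p$), one obtains the analogous statement for the $p+$ $t$-structure; concretely, $R\Psi_{\eta_v,I}[-1]$ sends $\FC(X_{I,\eta_v},\overline\Zm_l)$ into $\FC(X_{I,\bar s},\overline\Zm_l)$.

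It therefore suffices to observe that the source complex $\overline\Zm_l[d-1](\tfrac{d-1}{2})$ on $X_{I,\eta_v}$ is an object of $\FC(X_{I,\eta_v},\overline\Zm_l)$. The generic fiber $X_{I,\eta_v}$ is smooth of pure relative dimension $d-1$ over $F_v$ (in the case $m_1=0$; in the general level case one descends along the finite flat maps $[1]_{J,I}$), so $\overline\Zm_l[d-1](\tfrac{d-1}{2})$ is both perverse and manifestly $l$-torsion free, hence lies in $\FC$. Applying the $t$-exactness result above and unraveling the definition $\Psi_{I,\overline\Zm_l} = R\Psi_{\eta_v,I}(\overline\Zm_l[d-1])(\tfrac{d-1}{2})$ finishes the proof.

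The main obstacle is the $p+$-exactness of $R\Psi$ with integral coefficients, since the $p$-exactness is the classical Beilinson--Bernstein--Deligne--Gabber result. Once this is admitted (e.g.\ from Juteau's framework recalled in the preceding paragraphs, together with the compatibility of $R\Psi$ with Verdier duality on $X_I$, which holds because the structural morphism $X_I\to\spec\OC_v$ is proper), the rest is formal. The only delicate technical point is ensuring that Gabber's and Juteau's hypotheses apply to $X_I$, which is not smooth over $\OC_v$ when $m_1>0$; this is handled by the finite flatness of the level transition morphisms combined with the standard fact that on the generic fiber only $\overline\Zm_l$-local systems (and not more sophisticated coefficients) are involved.
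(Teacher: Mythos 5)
Your argument is correct and rests on exactly the two facts the paper uses, namely Gabber's $p$-exactness of $R\Psi[-1]$ and the compatibility of $R\Psi$ with Verdier duality, which together give the self-duality $D\Psi_{\IC,\overline\Zm_l}\simeq\Psi_{\IC,\overline\Zm_l}$. The paper states this at the level of the object ($\Psi$ is $p$-perverse and self-dual, so it is $p+$-perverse since $D$ exchanges the two $t$-structures), while you package the duality step as $p+$-exactness of the functor $R\Psi[-1]$ before evaluating it on the shifted constant sheaf; this is an organizational rather than a mathematical difference, so the two proofs are essentially the same.
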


\begin{proof}
D'aprËs \cite{ast} proposition 4.4.2,
$\Psi_{\IC, \overline \Zm_l}$ est un objet de $\lexp p \DC^{\leq 0}(X_{\IC,\bar s},\overline \Zm_l)$. D'aprËs
\cite{ill} variante 4.4 du thÈorËme 4.2, on a $D \Psi_{\IC, \overline \Zm_l} \simeq \Psi_{\IC, \overline \Zm_l}$
de sorte que
$$\Psi_{\IC, \overline \Zm_l} \in \lexp p \DC^{\leq 0}(X_{\IC,\bar s},\overline \Zm_l) \cap 
\lexp {p+} \DC^{\geq 0}(X_{\IC,\bar s},\overline \Zm_l)=\FC(X_{\IC,\bar s}, \overline \Zm_l).$$
\end{proof}

Le morphisme
$\bar j_{\neq c}:\overline X_{\IC} \setminus X^{\geq 1}_{\IC,\bar s,c} \hookrightarrow \overline X_{\IC}$
Ètant affine, on peut reprendre la preuve du lemme \ref{lem-HT1} et conclure ‡ la libertÈ de
$\lexp p \hi^0 i^{1,*}_{c} \bigl ( \Psi_{\IC} \bigr )\simeq 
\bigoplus_{\varrho \in \scusp_{\overline \Fm_l}(g)} 
\lexp p \hi^0 i^{1,*}_{c} \bigl ( \Psi_{\varrho} \bigr )$ avec
\addtocounter{smfthm}{1}
\begin{equation} \label{eq-sec-psi}
0 \rightarrow \bar j_{\neq c,!} \bar j^{*}_{\neq c} \Psi_{\varrho}
\longrightarrow \Psi_{\varrho} \longrightarrow i^1_{c,*} \lexp p \hi^0 i^{1,*}_{c} 
\bigl ( \Psi_{\varrho} \bigr ) \rightarrow 0.
\end{equation}
De l'exactitude de $\bar j_{\neq c,!} \bar j^{*}_{\neq c}$, les filtrations de $\Psi_\varrho$ introduites
plus avant, fournissent des filtrations de $\bar j_{\neq c,!} \bar j^{*}_{\neq c} \Psi_\varrho$.
Ainsi la filtration de stratification fournit les graduÈs $\gr^h_{!,\neq c}(\Psi_\varrho)$ avec
$$0 \rightarrow \gr^{h,-}_!(\Psi_\varrho) \longrightarrow \gr^h_{!,\neq c}(\Psi_\varrho) \longrightarrow
\gr^{h,+}_{!,\neq c}(\Psi_\varrho) \rightarrow 0,$$
o˘, cf la notation \ref{nota-jpc}, $\gr^{h,+}_{!,\neq c,\pi_v}(\Psi_\varrho) \simeq
\lexp {p(c)} j^{=h}_{\neq c,!*} j^{=h,*} _{\neq c} \gr^h_!(\Psi_\varrho)$.

\begin{nota}
Partant des $\gr^{h,+}_{!,\neq c}(\Psi_\varrho)$, on introduit comme prÈcÈdemment les 
$\gr^{h,+}_{!,\neq c,-1}(\Psi_\varrho)$, 
$\gr^{h,+}_{!,\neq c,\geq 0}(\Psi_\varrho)$ et $\gr^{h,+}_{!,\neq c,\pi_v}(\Psi_\varrho)$ 
pour tout $\pi_v \in \scusp(\varrho)$.
\end{nota}

Comme rappelÈ ‡ la proposition \ref{prop-QL-Psi}, le faisceau pervers libre
$i^1_{c,*} \lexp p \hi^0 i^{1,*}_{c} \bigl ( \Psi_{\varrho} \bigr )$ admet une filtration de 
stratification\footnote{La formulation sur $\overline \Qm_l$ est plus agrÈable car on peut y sÈparer
les contributions selon les cuspidales de $GL_g(F_v)$ pour $g$ variant de $1$ ‡ $d$.}
$$\Fil^0_c(\Psi_\varrho)=0 \subset \Fil^1_c(\Psi_\varrho) \subset \cdots \subset \Fil^d_c(\Psi_\varrho)
=i^1_{c,*} \lexp p \hi^0 i^{1,*}_{c} \bigl ( \Psi_{\varrho} \bigr ),$$ 
dont les graduÈs $\gr^h_c(\Psi_\varrho)$ vÈrifient, en supposant
pour simplifier les notations que $c=\overline{1_1}$, 
$$\gr^h_c(\Psi_\varrho) \otimes_{\overline \Zm_l} \overline \Qm_l \simeq
\ind_{P_{1,h-1,d-h}(F_v)}^{P_{1,d-1}(F_v)} \lexp p j^{=h}_{\overline{1_h},!*}
j^{=h,*}_{\overline{1_h}} \gr^h_!(\Psi_\varrho)  \otimes_{\overline \Zm_l} \overline \Qm_l.$$

\begin{nota} Comme prÈcÈdemment, on introduit les notations
$\gr^h_{c,-1}(\Psi_\varrho)$ et $\gr^h_{c,\geq 0}(\Psi_\varrho)$.
\end{nota}

Pour prouver le thÈorËme \ref{theo-glob2}, il nous suffit de montrer que la filtration de
stratification de $i^1_{c,*} \lexp p \hi^0 i^{1,*}_{c}  \Psi_{\varrho}$,
dont la $\overline \Qm_l$-version est donnÈe ‡ la proposition \ref{prop-QL-Psi}, admet pour 
graduÈs les $p$-extensions intermÈdiaires de \ref{prop-QL-Psi}.
En effet soit $z$ un point gÈomÈtrique
et $X^{\geq 1}_{\IC,\bar s,c}$ une strate pure contenant $z$. De la suite exacte courte 
$$0 \rightarrow \bar j_{\neq c,!} \bar j^{*}_{\neq c} \Psi_{\varrho}
\longrightarrow \Psi_{\varrho} \longrightarrow \lexp p \hi^0 i^{1,*}_{c} 
\bigl ( \Psi_{\varrho} \bigr ) \rightarrow 0,$$
on en dÈduit que le germe en $z$ de $\hi^i \Psi_{\varrho}$ est donnÈ par celui de 
$\lexp p \hi^0 i^{1,*}_{c}  \bigl ( \Psi_{\varrho} \bigr )$. Or 
\begin{itemize}
\item si $\lexp p \hi^0 i^{1,*}_{c}  \bigl ( \Psi_{\IC,\overline \Zm_l} \bigr )$ admet 
une filtration dont les 
graduÈs sont des $p$-extensions intermÈdiaires de systËmes locaux d'Harris-Taylor,

\item et comme les faisceaux de cohomologie de ceux-ci sont, d'aprËs \ref{prop-fc-fpht}, 
sans torsion, de sorte que les termes initiaux de la suite spectrale associÈe ‡ cette filtration et
calculant les germes en $z$ des faisceaux de cohomologie de 
$\lexp p \hi^0 i^{1,*}_{c}  \bigl ( \Psi_{\IC,\overline \Zm_l} \bigr )$, sont sans torsion.

\item Ainsi le rÈsultat dÈcoulerait du fait que sur $\overline \Qm_l$, cette suite spectrale 
dÈgÈnËre en $E_1$, cf. la derniËre remarque de cet article.
\end{itemize}

\begin{prop} \label{prop-psi-p}
Avec les notations prÈcÈdentes, pour tout $\pi_v \in \scusp_{-1}(\varrho)$,
la filtration de stratification de $\gr^{g_{-1}(\varrho)}_{!,\pi_v}(\Psi_\varrho)$ admet pour graduÈs les
$\lexp p j^{=tg_{-1}(\varrho)}_{!*} HT(\pi_v,\st_t(\pi_v))(\frac{1-t}{2})$.
\end{prop}

\begin{proof}
Notons que $\gr^{g_{-1}(\varrho)}_{!,\pi_v} \hookrightarrow \Psi_\varrho$, de sorte que
$$\lexp p i^1_{c,*} \hi^0 i^{1,*}_c \bigl ( \gr^{g_{-1}(\varrho)}_{!,\pi_v} \bigr ) \hookrightarrow
\lexp p i^1_{c,*} \hi^0 i^{1,*}_c \Psi_\varrho$$
est libre. Ainsi le rÈsultat pour $t>1$ (resp. $t=1$) se dÈduit de la suite exacte courte
$$0 \rightarrow \bar j_{\neq c,!} \bar j^{*}_{\neq c}  \bigl ( \gr^{g_{-1}(\varrho)}_{!,\pi_v} \bigr )
\longrightarrow  \gr^{g_{-1}(\varrho)}_{!,\pi_v} \longrightarrow \lexp p \hi^0 i^{1,*}_{c} 
\bigl ( \gr^{g_{-1}(\varrho)}_{!,\pi_v} \bigr ) \rightarrow 0,$$
et du corollaire \ref{coro-fill-j} (resp. de la proposition \ref{prop-extind}).
\end{proof}

\rem on dÈduit du rÈsultat prÈcÈdent que les graduÈs de la filtration de stratification de
$\gr^{g_{-1}(\varrho)}_! (\Psi_\varrho)$ sont les $\bigoplus_{\pi_v \in \scusp_{-1}(\varrho)}
\lexp p j^{=tg_{-1}(\varrho)}_{!*}  HT(\pi_v,\st_t(\pi_v))(\frac{1-t}{2})$.

En utilisant le fait que $\Psi_\IC$ est autodual, on en dÈduit le corollaire suivant.

\begin{coro} \label{coro-psi-p}
Pour tout $\pi_v \in \scusp_{-1}(\varrho)$, on a une surjection $\Psi_\varrho \twoheadrightarrow P(\pi_v)$,
o˘ les graduÈs de la filtration de stratification de $P(\pi_v)$ sont les
$\lexp {p+} j^{=tg_{-1}(\varrho)}_{!*} HT(\pi_v,\st_t(\pi_v))(\frac{1-t}{2})$, pour 
$t=1,\cdots, \lfloor \frac{d}{g_{-1}(\varrho)} \rfloor$.
\end{coro}

\rem on a de mÍme $\Psi_\varrho \twoheadrightarrow P_\varrho$, o˘ les graduÈs de la filtration de 
stratification de $P_\varrho$ sont les $\lexp {p+} j^{=tg_{-1}(\varrho)}_{!*} \LC$ o˘
$\LC$ est un rÈseau stable de $\bigoplus_{\pi_v \in \scusp_{-1}(\varrho)} HT(\pi_v,\st_t(\pi_v))(\frac{1-t}{2})$.

Dans la suite nous aurons besoin d'une version lÈgËrement amÈliorÈe de la proposition prÈcÈdente, et
donc par dualitÈ de son corollaire.

\begin{prop} \label{prop-psi-p2}
Soit $s_\varrho=\frac{d}{g_{-1}(\varrho)}$ que l'on suppose entier et $\geq 4$.
Il existe un quotient $\Psi_\varrho \twoheadrightarrow Q_\varrho$ o˘ les graduÈs $\gr^k_!(Q_\varrho)$
de la filtration de stratification de $Q_\varrho$ sont 
\begin{itemize}
\item nuls pour $k \neq tg_{-1}(\varrho)$ avec $1 \leq t \leq s_\varrho$, 

\item pour $k=tg_{-1}(\varrho)$ avec $1 \leq t \leq s_\varrho$ et $t \neq s_\varrho-1$,
ils sont isomorphes ‡ $\lexp {p+} j^{=tg_{-1}(\varrho)}_{!*} j^{=tg_{-1}(\varrho),*}
\gr^{tg_{-1}(\varrho)}_!(Q_\varrho)$ o˘ $j^{=tg_{-1}(\varrho),*} \gr^{tg_{-1}(\varrho)}_!(Q_\varrho)$
est un rÈseau stable de $\bigoplus_{\pi_v \in \scusp_{-1}(\varrho)} HT(\pi_v,\st_t(\pi_v))(\frac{1-t}{2})$ 
pour $t \neq s_\varrho-1$,

\item pour $k=(s_\varrho-1)g_{-1}(\varrho)$, on a
$$0 \rightarrow  \lexp {p+} j^{=s_\varrho g_{-1}(\varrho)}_{!*} \LC_1  \longrightarrow 
\gr^{s_\varrho-1}_!(Q_\varrho) \longrightarrow \lexp {p+} j^{=(s_\varrho-1) g_{-1}(\varrho)}_{!*}  \LC_2
 \rightarrow 0,$$
o˘ $\LC_1$ (resp. $\LC_2$) est un rÈseau stable de 
$\bigoplus_{\pi_v \in \scusp_{-1}(\varrho)} 
HT(\pi_v,\st_{s_\varrho} (\pi_v)) (\frac{3-s_\varrho}{2})$ 
(resp. de $\bigoplus_{\pi_v \in \scusp_{-1}(\varrho)} HT(\pi_v,\st_{s_\varrho-1}(\pi_v)) (\frac{2-s_\varrho}{2})$).
\end{itemize}
\end{prop}

\rem on va raisonner comme prÈcÈdemment par dualitÈ, de sorte qu'il
s'agit de faire \og remonter \fg{} le rÈseau de
$$\bigoplus_{\pi_v \in \scusp_{-1}(\varrho)} \lexp {p} j^{=s_\varrho g_{-1}(\varrho)}_{!*} 
HT(\pi_v,\st_{s_\varrho} (\pi_v)) (\frac{s_\varrho-3}{2})$$ 
vu comme sous-espace du conoyau de 
$\Fil^{g_{-1}(\varrho)}_!(\Psi_\varrho) \harrow \Psi_\varrho$, le long de la flËche de la figure \ref{fig-0}, 
en utilisant 
le diagramme \ref{eq-prop-extension}. Contrairement ‡ ce que l'on fera par la suite, on ne peut pas
le faire directement, on va alors utiliser le foncteur exact $j^{\geq 1}_{\neq c,!} j^{\geq 1,*}_{\neq c}$
ce qui nÈcessitera $s_\varrho \geq 4$ pour qu'on puisse en tirer une information utile.

\begin{proof}
Comme prÈcÈdemment on va utiliser la dualitÈ de Grothendieck-Verdier, et donc 
construire un monomorphisme strict $P \hookrightarrow \Psi_\varrho$ 
avec 
$$0 \rightarrow P_0 \longrightarrow P \longrightarrow P_1 \rightarrow 0$$
o˘ 
\begin{itemize}
\item les graduÈs de la filtration de stratification exhaustive de $P_1$ sont,  pour $t=1,\cdots,s_\varrho-2$, 
les $\lexp p j^{=tg_{-1}(\varrho)}_{!*}  j^{=tg_{-1}(\varrho),*} \gr^t(P_1)$ o˘ $ j^{=tg_{-1}(\varrho),*} \gr^t(P_1)$
est un rÈseau stable de $\bigoplus_{\pi_v \in \scusp_{-1}(\varrho)} HT(\pi_v,\st_t(\pi_v))(\frac{t-1}{2})$.

\item $P_0$ se dÈvisse en
$$0 \rightarrow P_{0,1} \longrightarrow P_0 \longrightarrow P_{0,2} \rightarrow 0$$
avec $P_{0,1} \simeq \lexp {p} j^{=s_\varrho g_{-1}(\varrho)}_{!*} \LC$ o˘ $\LC$ est un rÈseau
stable de $\bigoplus_{\pi_v \in \scusp_{-1}(\varrho)} HT(\pi_v,\st_{s_\varrho}(\pi_v))(\frac{s_\varrho-1}{2})$ et 
$$0 \rightarrow \lexp {p} j^{=(s_\varrho-1) g_{-1}(\varrho)}_{!*} \LC'_1
\longrightarrow P_{0,2} \longrightarrow \lexp {p} j^{=s_\varrho g_{-1}(\varrho)}_{!*} \LC'_2 \rightarrow 0,$$
o˘ $\LC'_1$ (resp. $\LC'_2$) est un rÈseau stable de
$\bigoplus_{\pi_v \in \scusp_{-1}(\varrho)} HT(\pi_v,\st_{s_\varrho-1}(\pi_v)) (\frac{s_\varrho-2}{2})$
(resp. $\bigoplus_{\pi_v \in \scusp_{-1}(\varrho)}HT(\pi_v,\st_{s_\varrho} (\pi_v)) (\frac{s_\varrho-3}{2})$).
\end{itemize}

\begin{figure}[htbp]
\begin{center}
\newrgbcolor{rvwvcq}{0.08235294117647059 0.396078431372549 0.7529411764705882}
\newrgbcolor{wrwrwr}{0.3803921568627451 0.3803921568627451 0.3803921568627451}
\psset{xunit=1cm,yunit=1cm,algebraic=true,dimen=middle,dotstyle=o,dotsize=5pt 0,linewidth=1.6pt,arrowsize=3pt 2,arrowinset=0.25}
\begin{pspicture*}(-5.0685714285714285,-3.0242857142857074)(9.350476190476183,7.318571428571425)
\pscircle[linewidth=2pt,linecolor=wrwrwr](-3.1,-0.2){0.5903388857258178}
\pscircle[linewidth=2pt,linecolor=wrwrwr](2.6,5.4){0.6133421349984373}
\pscircle[linewidth=2pt,linecolor=wrwrwr](-2.1028325829942993,0.7796732517950744){0.5070542263673541}
\pscircle[linewidth=2pt,linecolor=wrwrwr](1.337783927865424,4.15992806948182){0.45686014641488615}
\psline[linewidth=2pt,linecolor=wrwrwr](1.337783927865424,4.15992806948182)(3.08,2.87)
\pscircle[linewidth=2pt,linestyle=dotted,linecolor=wrwrwr](3.08,2.87){0.4386342439892262}
\psline[linewidth=2pt,linecolor=wrwrwr](-3.1,-0.2)(-2.1028325829942993,0.7796732517950744)
\psline[linewidth=2pt,linestyle=dotted,linecolor=wrwrwr](-2.1028325829942993,0.7796732517950744)(1.337783927865424,4.15992806948182)
\psline[linewidth=2pt,linecolor=wrwrwr](1.337783927865424,4.15992806948182)(2.6,5.4)
\rput[tl](-3.8876190476190486,0.48047619047619367){p}
\rput[tl](-2.8019047619047632,1.5090476190476212){p}
\rput[tl](1.0266666666666635,5.109047619047618){p}
\rput[tl](1.8076190476190441,6.2138095238095215){p}
\rput[tl](-0.6,3.5){p}
\rput{44.1}(-1.646,1.25){\psellipse[linewidth=2pt,linecolor=wrwrwr](0,0)(3.4,1.58)}
\rput[tl](-3.7733333333333343,2.9947619047619054){$P_1=$}
\rput[tl](-2.6,0.1){$g_{-1}(\varrho)$}
\rput[tl](-1.43,1.35){$2g_{-1}(\varrho)$}
\rput{81.1420}(3.08,4.48){\psellipse[linewidth=2pt,linecolor=wrwrwr](0,0)(2.44,2.31)}
\rput[tl](-0.23,5.7){$P_0=$}
\rput[tl](2.9885714285714244,4.0){$P_{0,2}$}
\rput[tl](3.2,5.35){$s_\varrho g_{-1}(\varrho)$}
\pscircle[linewidth=2pt,linecolor=wrwrwr](4.2,0.4){0.4539214372573928}
\pscircle[linewidth=2pt,linecolor=wrwrwr](0.2844698512137831,3.125093187157401){0.42722753570788424}
\psline[linewidth=2pt,linecolor=wrwrwr](0.2844698512137831,3.125093187157401)(1.337783927865424,4.15992806948182)
\psline[linewidth=2pt,linecolor=wrwrwr](4.2,0.4)(5.3,1.4)
\pscircle[linewidth=2pt,linecolor=wrwrwr](5.3,1.4){0.52}
\psline[linewidth=2pt,linestyle=dotted,linecolor=wrwrwr](4.2,0.4)(1.68,-2.11)
\psline[linewidth=2pt,linecolor=wrwrwr]{->}(5.1,1.6)(3.2,2.7)
\psline[linewidth=3.2pt,linestyle=dotted,linecolor=wrwrwr](1.2933333333333332,-0.5480952380952349)(6.95047619047619,4.556666666666666)
\rput[tl](-1.2971428571428592,-1.7480952380952326){$Q_2 \harrow \Psi_\varrho$}
\rput{58.97}(4.979,0.95){\psellipse[linewidth=2pt,linecolor=wrwrwr](0,0)(1.8,1.11)}
\rput[tl](5.5,-0.4){$=Q_{2,1}$}
\psline[linewidth=2pt,linecolor=wrwrwr](-4.84,-1.1576190476190429)(-2.687619047619049,-1.97666666666666)
\psline[linewidth=2pt,linecolor=wrwrwr](-2.687619047619049,-1.97666666666666)(2.4933333333333296,-0.5861904761904708)
\psline[linewidth=2pt,linecolor=wrwrwr](2.4933333333333296,-0.5861904761904708)(7.769523809523803,-1.481428571428565)
\rput[tl](5.7,2.4){$s_\varrho g_{-1}(\varrho)$}
\rput[tl](4.721904761904757,0.8042857142857172){$(s_\varrho-1)g_{-1}(\varrho)$}
\psline[linewidth=2pt,linecolor=wrwrwr](-4.84,-1.1576190476190429)(-4.801904761904762,2.004285714285717)
\psline[linewidth=2pt,linecolor=wrwrwr](7.769523809523803,-1.481428571428565)(8.112380952380946,1.7947619047619077)
\begin{scriptsize}
\psdots[dotstyle=*,linecolor=rvwvcq](-3.1,-0.2)
\psdots[dotstyle=*,linecolor=rvwvcq](2.6,5.4)
\psdots[dotstyle=*,linecolor=rvwvcq](-2.1028325829942993,0.7796732517950744)
\psdots[dotstyle=*,linecolor=rvwvcq](1.337783927865424,4.15992806948182)
\psdots[dotstyle=*,linecolor=rvwvcq](3.08,2.87)
\psdots[dotstyle=*,linecolor=rvwvcq](4.2,0.4)
\psdots[dotstyle=*,linecolor=rvwvcq](0.2844698512137831,3.125093187157401)
\psdots[dotstyle=*,linecolor=rvwvcq](5.3,1.4)
\end{scriptsize}
\end{pspicture*}
\newrgbcolor{rvwvcq}{0.08235294117647059 0.396078431372549 0.7529411764705882}
\newrgbcolor{wrwrwr}{0.3803921568627451 0.3803921568627451 0.3803921568627451}
\psset{xunit=1cm,yunit=1cm,algebraic=true,dimen=middle,dotstyle=o,dotsize=5pt 0,linewidth=1.6pt,arrowsize=3pt 2,arrowinset=0.25}
\begin{pspicture*}(-5.0685714285714285,-3.0242857142857074)(9.350476190476183,7.318571428571425)
\pscircle[linewidth=2pt,linecolor=wrwrwr](-3.1,-0.2){0.5903388857258178}
\pscircle[linewidth=2pt,linecolor=wrwrwr](2.6,5.4){0.6133421349984373}
\pscircle[linewidth=2pt,linecolor=wrwrwr](-2.1028325829942993,0.7796732517950744){0.5070542263673541}
\pscircle[linewidth=2pt,linecolor=wrwrwr](1.337783927865424,4.15992806948182){0.45686014641488615}
\psline[linewidth=2pt,linecolor=wrwrwr](1.337783927865424,4.15992806948182)(3.08,2.87)
\pscircle[linewidth=2pt,linestyle=dotted,linecolor=wrwrwr](3.08,2.87){0.4386342439892262}
\psline[linewidth=2pt,linecolor=wrwrwr](-3.1,-0.2)(-2.1028325829942993,0.7796732517950744)
\psline[linewidth=2pt,linestyle=dotted,linecolor=wrwrwr](-2.1028325829942993,0.7796732517950744)(1.337783927865424,4.15992806948182)
\psline[linewidth=2pt,linecolor=wrwrwr](1.337783927865424,4.15992806948182)(2.6,5.4)
\rput{44.1}(-1.646,1.25){\psellipse[linewidth=2pt,linecolor=wrwrwr](-1,0)(2.4,1.58)}
\rput[tl](-3.9,2.5){$Q_2^+=$}
\rput[tl](-2.6,0.1){$g_{-1}(\varrho)$}
\rput[tl](-1.43,1.35){$2g_{-1}(\varrho)$}
\rput[tl](2.9885714285714244,4.0){$P_{0,2}$}
\rput[tl](3.2,5.35){$s_\varrho g_{-1}(\varrho)$}
\pscircle[linewidth=2pt,linecolor=wrwrwr](4.2,0.4){0.4539214372573928}
\pscircle[linewidth=2pt,linecolor=wrwrwr](0.2844698512137831,3.125093187157401){0.42722753570788424}
\psline[linewidth=2pt,linecolor=wrwrwr](0.2844698512137831,3.125093187157401)(1.337783927865424,4.15992806948182)
\psline[linewidth=2pt,linecolor=wrwrwr](4.2,0.4)(5.3,1.4)
\pscircle[linewidth=2pt,linecolor=wrwrwr](5.3,1.4){0.52}
\rput{-27.66}(3.02,3.14){\psellipse[linewidth=2pt,linecolor=wrwrwr](0,0)(4.66,2.8)}
\psline[linewidth=2pt,linecolor=wrwrwr]{->}(5.1,1.6)(3.2,2.7)
\psline[linewidth=3.2pt,linestyle=dotted,linecolor=wrwrwr](-2,3.45)(5,-2.4)
\rput[tl](-1.2971428571428592,-1.7480952380952326){$Q_2 \harrow \Psi_\varrho$}
\rput{58.97}(4.979,0.95){\psellipse[linewidth=2pt,linecolor=wrwrwr](0,0)(1.8,1.11)}
\rput[tl](5.5,-0.4){$=Q_{2,1}$}
\rput[tl](-2.35,5.8){$Q_{2}^-=$}
\psline[linewidth=2pt,linecolor=wrwrwr](-4.84,-1.1576190476190429)(-2.687619047619049,-1.97666666666666)
\psline[linewidth=2pt,linecolor=wrwrwr](-2.687619047619049,-1.97666666666666)(2.4933333333333296,-0.5861904761904708)
\psline[linewidth=2pt,linecolor=wrwrwr](2.4933333333333296,-0.5861904761904708)(7.769523809523803,-1.481428571428565)
\rput[tl](5.7,2.4){$s_\varrho g_{-1}(\varrho)$}
\rput[tl](4.721904761904757,0.8042857142857172){$(s_\varrho-1)g_{-1}(\varrho)$}
\psline[linewidth=2pt,linecolor=wrwrwr](-4.84,-1.1576190476190429)(-4.801904761904762,2.004285714285717)
\psline[linewidth=2pt,linecolor=wrwrwr](7.769523809523803,-1.481428571428565)(8.112380952380946,1.7947619047619077)
\psline[linewidth=2pt,linecolor=wrwrwr]{->}(4,.3)(2,-1.7)
\begin{scriptsize}
\psdots[dotstyle=*,linecolor=rvwvcq](-3.1,-0.2)
\psdots[dotstyle=*,linecolor=rvwvcq](2.6,5.4)
\psdots[dotstyle=*,linecolor=rvwvcq](-2.1028325829942993,0.7796732517950744)
\psdots[dotstyle=*,linecolor=rvwvcq](1.337783927865424,4.15992806948182)
\psdots[dotstyle=*,linecolor=rvwvcq](3.08,2.87)
\psdots[dotstyle=*,linecolor=rvwvcq](4.2,0.4)
\psdots[dotstyle=*,linecolor=rvwvcq](0.2844698512137831,3.125093187157401)
\psdots[dotstyle=*,linecolor=rvwvcq](5.3,1.4)
\end{scriptsize}
\end{pspicture*}
\end{center}
\caption{\label{fig-0} Illustration graphique de la preuve de \ref{prop-psi-p2}}
\end{figure}

Soit tout d'abord
$\widetilde \gr^{2g_{-1}(\varrho)}_{!,-1}(\Psi_\varrho) \hookrightarrow  \gr^{2g_{-1}(\varrho)}_{!}
(\Psi_\varrho)$ le sous-espace strict isomorphe sur $\overline \Qm_l$ ‡  
$\gr^{2g_{-1}(\varrho)}_{!,-1}(\Psi_\varrho)$, i.e. par rapport ‡ la $\varrho$-filtration naÔve utilisÈe jusqu'‡
prÈsent, on commence par filtrer avec $\scusp_{-1}(\varrho)$. Soit alors
$$\xymatrix{
\Fil^{g_{-1}(\varrho)}_!(\Psi_\varrho) \ar@{^{(}->}[r] \ar@{=}[d] & Q_2 \ar@{-->>}[r] \ar@{^{(}-->}[d] &
Q_{2,1} \ar@{^{(}->}[d] \\
\Fil^{g_{-1}(\varrho)}_!(\Psi_\varrho) \ar@{^{(}->}[r] \ar@{=}[d] & Q_1 \ar@{-->>}[r] \ar@{^{(}-->}[d] &
\widetilde \gr^{2g_{-1}(\varrho)}_{!,-1}(\Psi_\varrho) \ar@{^{(}->}[d] \\
\Fil^{g_{-1}(\varrho)}_!(\Psi_\varrho) \ar@{^{(}->}[r]  & \Fil^{2g_{-1}(\varrho)}_!(\Psi_\varrho) \ar@{->>}[r]
&   \gr^{2g_{-1}(\varrho)}_{!,}(\Psi_\varrho),
}$$
o˘ $Q_{2,1} \otimes_{\overline \Zm_l} \overline \Qm_l \simeq \bigoplus_{\pi_v \in \scusp_{-1}(\varrho)}
Q_{2,1,\pi_v}$ avec
$$0 \rightarrow \PC(\pi_v,s_\varrho)(\frac{s_\varrho-3}{2}) \longrightarrow Q_{2,1,\pi_v} \longrightarrow
\PC(\pi_v,s_\varrho-1)(\frac{s_\varrho-4}{2}) \rightarrow 0.$$
On considËre alors 
$$0 \rightarrow \Fil_*^{3g_{-1}(\varrho)-d}(Q_{2}) \longrightarrow Q_{2} \longrightarrow 
\CoFil_*^{3g_{-1}(\varrho)-d}(Q_{2}) \rightarrow 0,$$
puis les filtrations de stratification exhaustives de $Q_{2}^-:= \Fil_*^{3g_{-1}(\varrho)-d}(Q_{2})$
et $Q_{2}^+:=\CoFil_*^{3g_{-1}(\varrho)-d}(Q_{2})$. On obtient ainsi une filtration
$$0=\Fil^0(Q_{2}) \hookrightarrow \Fil^1(Q_{2}) \hookrightarrow \cdots \hookrightarrow 
\Fil^{s_\varrho+2}(Q_{2})=Q_{2}$$
dont les graduÈs $\gr^k(Q_{2})$
aprËs tensorisation par $\overline \Qm_l$ s'Ècrivent comme une somme directe
sur les $\pi_v \in \scusp_{-1}(\varrho)$
\begin{itemize}
\item des $\PC(\pi_v,s_\varrho-k+1)(\frac{s_\varrho-k}{2})$ pour $k=1,2,3$;

\item $\PC(\pi_v,s_\varrho-k+4)(\frac{s_\varrho-k+1}{2})$ pour $k=4,5$;

\item $\PC(\pi_v,s_\varrho-k+3)(\frac{s_\varrho-k+2}{2})$ pour $k=6,\cdots, s_\varrho+2$.
\end{itemize}

¿ ce stade on ne peut pas affirmer que les structures entiËres correspondent ‡ des $p$-extensions
intermÈdiaires, pour ce faire nous allons appliquer le foncteur exact
$j^{\geq 1}_{\neq c,!} j^{\geq 1,*}_{\neq c}$ au monomorphisme strict $Q_{2} \harrow \Psi_\varrho$.
La filtration $\Fil^\bullet(Q_{2})$ de $Q_{2}$ fournit alors une filtration 
$$
0=\Fil_{\neq c}^0(Q_{2}) \hookrightarrow \Fil_{\neq c}^1(Q_{2}) \hookrightarrow \cdots 
\hookrightarrow \Fil_{\neq c}^{2 s_\varrho+2}(Q_{2})= j^{\geq 1}_{\neq c,!} j^{\geq 1,*}_{\neq c} Q_{2} 
\harrow Q_{2} \harrow \Psi_\varrho
$$
dont les graduÈs $\gr^k_{\neq c}(Q_{2})$ vÈrifiant les propriÈtÈs suivantes\footnote{On utilise en
particulier que les faisceaux de cohomologie des faisceaux pervers d'Harris-Taylor
sont sans torsion, cf. aussi le corollaire \ref{coro-j-c}.}:
\begin{itemize}
\item pour $k=1$ c'est un rÈseau stable de $\bigoplus_{\pi_v \in \scusp_{-1}(\varrho)} \PC(\pi_v,s_\varrho)
(\frac{s_\varrho-1}{2})$;

\item pour $k=2$, il s'Ècrit sous la forme $\lexp {p(c)} j^{=(s_\varrho-1)g_{-1}(\varrho)}_{\neq c,!*} \QC_2$
o˘ $\QC_2$ est un rÈseau stable de $\bigoplus_{\pi_v \in \scusp_{-1}(\varrho)}
HT_{\neq c}(\pi_v,\st_{s_\varrho-1}(\pi_v))(\frac{s_\varrho-2}{2})$.
On dira plus simplement que $\gr^k_{\neq c}(Q_{2,1})$ est une $p(c)$-extension intermÈdiaire
des $\PC_{\neq c}(\pi_v,s_\varrho-1)(\frac{s_\varrho-2}{2})$;

\item pour $k=3$, avec le vocabulaire du tiret prÈcÈdent en remplaÁant $p(c)$ par $p$, 
c'est une $p$-extension intermÈdiaire des $\PC_{c}(\pi_v,s_\varrho-1)(\frac{s_\varrho-2}{2})$;

\item pour $k=4$, c'est une $p(c)$-extension intermÈdiaire des 
$\PC_{\neq c}(\pi_v,s_\varrho-2)(\frac{s_\varrho-3}{2})$;

\item pour $k=5$ c'est un rÈseau stable des $\bigoplus_{\pi_v \in \scusp_{-1}(\varrho)} \PC(\pi_v,s_\varrho)
(\frac{s_\varrho-3}{2})$;

\item pour $k=6$, c'est une $p(c)$-extension intermÈdiaires des 
$\PC_{\neq c}(\pi_v,s_\varrho-1)(\frac{s_\varrho-4}{2})$;

\item pour $k=7$ et comme on a supposÈ $s_\varrho \geq 4$, c'est une $p$-extension intermÈdiaire
$\PC_{c}(\pi_v,s_\varrho-2)(\frac{s_\varrho-3}{2})$;

\item pour $k=9+2i$ (resp. $10+2i$) et $0 \leq i \leq s_\varrho-4$, 
c'est une $p(c)$-extension intermÈdiaire (resp. une $p$-extension
intermÈdiaire) des $\PC_{\neq c}(\pi_v,s_\varrho-3-i)(\frac{s_\varrho-3-i}{2})$
(resp. de $\PC_{c}(\pi_v,s_\varrho-2-i)(\frac{s_\varrho-3-i}{2})$).
\end{itemize}

\rem on dÈduit de la proposition \ref{prop-psi-p} que $\gr^2_{\neq c}(Q_{2,1})$ et $\gr^4_{\neq c}(Q_{2,1})$
sont en fait des $p$-extensions intermÈdiaires.

Il s'agit ‡ prÈsent de rÈorganiser la filtration selon les deux flËches de la figure du bas de \ref{fig-0}.
\begin{itemize}
\item[a)] On fait alors passer $\gr_{\neq c}^6(Q_{2,1})$ en quotient de 
$j^{\geq 1}_{\neq c,!}j^{\geq 1,*}_{\neq c} Q_{2,1}$; en utilisant le diagramme \ref{eq-prop-extension}
et comme les rÈseaux des systËmes locaux ne sont pas modifiÈs, lors de l'Èchange
les graduÈs $\gr^k_{\neq c}(Q_{2,1})$ qui deviennent
$\widetilde \gr^k_{\neq c}(Q_{2,1})$ pour $7 \leq k \leq 2s_\varrho+2$ restent toujours des $p$
et $p(c)$ extensions intermÈdiaires. En particulier $\widetilde \gr^k_{\neq c}(Q_{2,1})$ est toujours
une $p$-extension intermÈdiaire.

\item[b)] On Èchange ensuite l'ordre entre $\gr^4_{\neq c}(Q_{2,1})$ et $\gr^5_{\neq c}(Q_{2,1})$ ce qui 
fournit un nouveau $\widetilde \gr^4_{\neq c}(Q_{2,1})$ avec
$$0 \rightarrow \widetilde \gr^4_{\neq c}(Q_{2,1}) \longrightarrow X \longrightarrow 
\widetilde \gr^7_{\neq c}(Q_{2,1}) \rightarrow 0$$
o˘ $X$ est $GL_d(F_v)$-Èquivariant et $X \otimes_{\overline \Zm_l} \overline \Qm_l \simeq
\bigoplus_{\pi_v \in \scusp_{-1}(\varrho)} \PC(\pi_v,s_\varrho-2)(\frac{s_\varrho-3}{2})$.
Par Èquivariance et comme $\widetilde \gr^7_{\neq c}(Q_{2,1})$ est une $p$-extension intermÈdiaire
on en dÈduit qu'il en est de mÍme pour $\widetilde \gr^4_{\neq c}(Q_{2,1})$. 
\end{itemize}
¿ ce stade on a presque fini de prouver le rÈsultat annoncÈ sauf qu'‡ priori on a simplement un
dernier quotient $\gr^{s_\varrho-2}(P_1)$ vÈrifiant
$$0 \rightarrow \lexp {p(c)} j^{=g_{-1}(\varrho)}_{\neq c,!*} j^{=g_{-1}(\varrho),*}_{\neq c} P_1
\longrightarrow \gr^{s_\varrho-2}(P_1) \longrightarrow A \rightarrow 0$$
o˘ $A \otimes_{\overline \Zm_l} \overline \Qm_l \simeq \lexp p j^{=g_{-1}(\varrho)}_{c,!*}
j^{=g_{-1}(\varrho),*}_c P_1$. On conclut alors que $\gr^{s_\varrho-2}(P_1) \simeq 
 \lexp {p} j^{=g_{-1}(\varrho)}_{\neq c,!*} j^{=g_{-1}(\varrho),*}_{\neq c} P_1$ en utilisant la proposition
\ref{prop-extind2}.

\end{proof}

\emph{Revenons ‡ prÈsent ‡ la preuve de la proposition \ref{prop-ext-pp} que l'on va
dÈmontrer en raisonnant  par l'absurde}. ConsidÈrons donc un point gÈomÈtrique de dimension maximale
de sorte qu'il existe un entier $k$ tel que $\lexp p h^0 i_z^* \gr^k_c(\Psi_\varrho) \neq (0)$.
Soit alors $h_z$ maximal tel que $z$ est contenu
dans $X^{\geq h_z}_{\IC,\bar s}$ et on note 
$$i_z: \overline{\{ z \} }_{|X^{1 \leq h_z}_{\IC,\bar s}} \hookrightarrow X^{1 \leq h_z}_{\IC,\bar s}:=
X^{\geq 1}_{\IC,\bar s} \setminus X^{\geq h_z+1}_{\IC,\bar s}$$
ainsi que $j_z:X^{1 \leq h_z}_{\IC,\bar s} \setminus \overline{\{ z \} }_{|X^{1 \leq h_z}_{\IC,\bar s}}  
\hookrightarrow X^{1\leq h_z}_{\IC,\bar s}$.

\rem pour simplifier les notations, on supposera dans la suite que $z$ est un point 
supersingulier, i.e. $h_z=d$. Les modifications pour retrouver le cas gÈnÈral se font
en appliquant simplement le foncteur exact $j^{1 \leq h_z,*}_{\overline{1_{h_z}}}$ o˘
$$j^{1 \leq h}_{\overline{1_{h_z}}}:X^{1 \leq h}_{\IC,\bar s,\overline{1_{h_z}}} \hookrightarrow 
X^{\geq 1}_{\IC,\bar s},$$
‡ tous les 
faisceaux pervers intervenant et en utilisant l'Èquivariance sous $P_{h_z,d-h_z}(F_v)$ 
(resp. $P_{1,h_z-1,d-h_z}(F_v)$) en lieu et place de celle de $GL_d(F_v)$
(resp. $P_{1,d-1}(F_v)$).

On introduit les hypothËses suivantes dont le lecteur pourra trouver une illustration graphique ‡ la figure 
\ref{fig-1} et que nous allons montrer par rÈcurrence 
sur $h$ de $d-h_z$ (supposÈ Ègal ‡ $d$ d'aprËs la remarque prÈcÈdente), ‡ $1$. On notera encore
$s_\varrho:=\lfloor \frac{d}{g_{-1}(\varrho)} \rfloor$.

\begin{figure}[htbp]
\input{LT-pic2-fin.tex}


\begin{center}
\psset{xunit=1cm,yunit=1cm,algebraic=true,dimen=middle,dotstyle=o,dotsize=5pt 0,linewidth=1.6pt,arrowsize=3pt 2,arrowinset=0.25}
\begin{pspicture*}(-4.7,1)(8,9)
\pscircle[linewidth=2.pt,hatchcolor=black,fillstyle=crosshatch,hatchangle=45.0,hatchsep=0.242](-3.,5.){0.4204759208325727}
\rput[tl](-4.6902,7.828){$\left \{ \begin{array}{l} p \\ p+ \end{array} \right.$}
\rput[tl](-4.0368,6.497){$\neq c$}
\rput[tl](-2.5,7.8){$g_{-1}(\varrho) | h$}
\rput[tl](-3.8,4.5){$\mathrm{gr}^{h,+}_!(\Psi_{\varrho})$}
\rput[tl](-1.8588,4.9){$=$}
\pscircle[linewidth=2.pt,hatchcolor=black,fillstyle=hlines,hatchangle=45.0,hatchsep=0.242](0.,5.){0.42}
\pscircle[linewidth=2.pt](2.,5.){0.42}
\rput[tl](0.7548,5.1176){$/$}
\rput[tl](1.4,4.5){$\mathrm{gr}^{h,+}_{!,-1}(\Psi_\varrho)$}
\rput[tl](-0.8,4.5){$\mathrm{gr}^{h,+}_{!,\geq 0}(\Psi_\varrho)$}
\pscircle[linewidth=2.pt,hatchcolor=black,fillstyle=crosshatch,hatchangle=45.0,hatchsep=0.242](-3.,2.){0.38}
\rput{40.05}(-1.77,3.08){\psellipse[linewidth=2.pt](0,0)(1.03,0.5)}
\pscircle[linewidth=2.pt,hatchcolor=black,fillstyle=crosshatch,hatchangle=45.0,hatchsep=0.242](-2.24,2.68){0.30}
\psline[linewidth=2.pt,linestyle=dashed,dash=6pt 6pt](-1.86,3.)(-1.22,3.62)
\psline[linewidth=2.pt](-3.,2.)(-2.24,2.68)
\rput[tl](-1.1328,3.0848){$\mathrm{gr}^{h,-}_!(\Psi_\varrho)$}
\rput[tl](-2.609,1.8506){$\mathrm{gr}^{h,+}_!(\Psi_\varrho)$}
\rput[tl](0.295,2.1894){$\mathbf{=}$}
\rput[tl](1.0936,2.3588){$\mathrm{gr}^h_!(\Psi_\varrho)=\mathrm{gr}^{h,-}_!(\Psi_\varrho) / \mathrm{gr}^{h,+}_!(\Psi_\varrho)$}
\rput[tl](-2.73,5.8){$h$}
\pscircle[linewidth=2.pt,hatchcolor=black,fillstyle=crosshatch,hatchangle=45.0,hatchsep=0.242](-2.9962,6.7632){0.3872}
\rput[tl](-1.8,7){$=\mathrm{gr}^{h,+}_{!,\neq c}(\Psi_\varrho)$}
\pscircle[linewidth=2.pt,hatchcolor=black,fillstyle=crosshatch,hatchangle=45.0,hatchsep=0.242](3.199,6.739){0.4362717043311427}
\rput[tl](2.9328,7.7312){$h$}
\rput[tl](2.5,6.3){$c$}
\rput[tl](4,7){$=\mathrm{gr}_c^h(\Psi_\varrho)$}
\end{pspicture*}
\end{center}
\caption{\label{fig-1} Illustration graphique de $HR_\Psi(h+1)$}
\end{figure}


\noindent - \fbox{$HR_\Psi(h+1)$}: $\Psi_{\varrho}$ admet une filtration
$$0=\Fil^0(\Psi_{\varrho},h) \subset \Fil^1(\Psi_{\varrho},h) \subset \Fil^2(\Psi_{\varrho},h)
\subset \Fil^3(\Psi_{\varrho},h) \subset \Fil^4(\Psi_{\varrho},h) \subset 
\Fil^5(\Psi_{\varrho},h)=\Psi_{\varrho}$$
vÈrifiant les propriÈtÈs suivantes:
\begin{itemize}
\item $\Fil^1(\Psi_{\varrho},h)=j^{=1}_{\neq c,!} j^{=1,*}_{\neq c} \Fil^{h-1}_!(\Psi_{\varrho})$;

\item $\gr^2(\Psi_{\varrho},h)$ admet une filtration dont les graduÈs sont\footnote{On rappelle que ces
graduÈs sont nuls si $k$ n'est pas un multiple de $g_{-1}(\varrho)$.} si $s_\varrho g_{-1}(\varrho) \neq d$, 
(resp. si $s_\varrho g_{-1}(\varrho)= d$) 
les $\gr^{k}_{!,\neq c,\geq 0}(\Psi_{\varrho})$ et $\gr^{k,-}_{!,-1}(\Psi_{\varrho})$
pour $k$ variant de $h$ ‡ $d$ (resp. on enlËve le terme $\gr^{(s_\varrho-1)g_{-1}(\varrho),-}_{!,-1}
(\Psi_\varrho)$ par rapport au cas non respÈ). On notera en particulier que la partie libre de
$\lexp p \hi^0 i_z^* \Fil^2(\Psi_\varrho,h)$ est triviale et on demande qu'il en soit de mÍme pour sa torsion;

\item $\gr^3(\Psi_{\varrho},h) \simeq \gr^h_{!,\neq c,-1}(\Psi_{\varrho})$; 
%
%
%

\item 
$\Fil^4(\Psi_{\varrho},h)$ est $GL_d(F_v)$ Èquivariant 
et la filtration de stratification de $\gr^{4}_{\geq 0}(\Psi_{\varrho},h)$ 
admet pour graduÈs successifs les $\gr^k_{c}(\Psi_\varrho)$ pour $1 \leq k \leq h$
puis les $\gr^k_{c,\geq 0}(\Psi_\varrho)$ pour $h+1 \leq k \leq h$.

\item Si $s_\varrho$ ne divise pas $h_z$ (supposÈ Ègal ‡ $d$) ou si $s_\varrho \leq 2$,
la filtration de stratification de $\gr^5(\Psi_{\varrho},h)$ admet pour graduÈs
successifs $\gr^k_!(\gr^5(\Psi_\varrho,h))=\lexp p j^{=k}_{!*} j^{=k,*} \gr^k_{!,-1}(\Psi_\varrho)$ 
pour $h+1 \leq k \leq d$. Dans les autres cas, on demande la mÍme condition sauf pour
\begin{multline*}
0 \rightarrow \lexp p j^{=d}_{!*} j^{=d,*} \gr^{d-g_{-1}(\varrho),-}_{!,-1}(\Psi_\varrho) \\ \longrightarrow
\gr^{(s_\varrho-1)g_{-1}(\varrho)}_!(\gr^5(\Psi_\varrho,h)) \longrightarrow \\ 
\lexp p j^{=d-g_{-1}(\varrho)}_{!*} j^{=d-g_{-1}(\varrho),*} \gr^{d-g_{-1}(\varrho)}_{!,-1}(\Psi_\varrho) \rightarrow 0.
\end{multline*}
\end{itemize}

\rem la modification de la filtration pour les cas o˘ $s_\varrho \geq 3$ divise $h_z=d$, 
provient du fait qu'on
cherche ‡ avoir la partie libre de $\lexp p \hi^0 i_z^* \Fil^2(\Psi_\varrho,h)$ nulle, ce qui sera utile 
pour montrer l'inductivitÈ. Cela implique en particulier que l'on doive alors utiliser la proposition
\ref{prop-psi-p2} au lieu du corollaire \ref{coro-psi-p}. Notons enfin que cette nuance n'est pas illustrÈe
dans la figure \ref{fig-1}  afin de ne pas alourdir encore le dessin.

\medskip

\noindent - \fbox{$HR_p(h)$}: pour tout $k>h$, on a
$\lexp p j^{=k}_{!*} j^{=k,*} \gr^k_{!,-1}(\Psi_\varrho)=\lexp {p+} j^{=k}_{!*} j^{=k,*} \gr^k_{!,-1}(\Psi_\varrho)$.

\rem on notera que $HR_p(1)$ implique le rÈsultat cherchÈ, i.e. la proposition \ref{prop-ext-pp}.

CommenÁons par noter que tout systËme local
sur $X^{=d}_{\IC,\bar s}=X^{=d}_{\IC,\bar s,c}$ est pervers et donc $HR_p(d)$
est trivialement vÈrifiÈ. En ce qui concerne $HR_\Psi(d)$, il suffit simplement de poser
$\Fil^1(\Psi_{\varrho},d)=\Fil^2(\Psi_{\varrho},d)=\Fil^3(\Psi_{\varrho},d)=j^{=1}_{\neq c,!}j^{=1,*}_{\neq c}
\Psi_\varrho$ et $\Fil^4(\Psi_\varrho,d)=\Fil^5(\Psi_\varrho,d)$.
%
%
%
%

\begin{lemm} \label{lem-preuve1}
Si $HR_\Psi(h+1)$ et $HR_p(h+1)$ sont vÈrifiÈes, alors 
$HR_p(h)$ aussi.
\end{lemm}

\begin{proof}
Visuellement ‡ l'aide de la figure \ref{fig-1}, il s'agit de montrer qu'on peut faire passer le terme
$\gr^{h,+}_{!,\neq c,-1}(\Psi_\varrho)$ de l'autre cÙtÈ de la flËche notÈe $\heartsuit$, pour former
un quotient 
$\Fil^4(\Psi_\varrho,h) \twoheadrightarrow \gr^h_!(\Psi_\varrho)$ vÈrifiant
$$0 \rightarrow \lexp {p(c)} j^{=h}_{\neq c,!*} j^{=h,*}_{\neq c} \gr^h_!(\Psi_\varrho) \longrightarrow
 \gr^h_!(\Psi_\varrho) \longrightarrow \lexp {p+}  j^{=h}_{c,!*} j^{=h,*}_{c} \gr^h_!(\Psi_\varrho)
\rightarrow 0.$$
On utilise ensuite le corollaire \ref{coro-psi-p} pour conclure.

ConsidÈrons ainsi
$\Fil^4(\Psi_{\varrho},h)$ et $\pi_v \in \scusp_{-1}(\varrho)$. S'il n'existe pas d'entier $t$
tel que $tg_{-1}=h$ alors il n'y a rien ‡ dÈmontrer puisque $HR_p(h)=HR_p(h+1)$.
Notons alors $t$ tel que $tg_{-1}(\varrho)=h$ et considÈrons
$$0 \rightarrow \gr^{3,\pi_v}(\Psi_{\varrho},h) \longrightarrow \gr^3(\Psi_{\varrho},h)\longrightarrow \gr^3_{\pi_v}(\Psi_{\varrho},h) \rightarrow 0$$
o˘ $ \gr^3_{\pi_v}(\Psi_{\varrho},h)  \simeq 
\lexp {p(c)} j^{=h}_{!*,\neq c} HT(\pi_v,\st_t(\pi_v)(\frac{1-t}{2}).$
Introduisons
$$\xymatrix{
\Fil^2(\Psi_{\varrho},h) \ar@{=}[d] \ar@{^{(}->}[r] & \Fil^{3,\pi_v}(\Psi_{\varrho},h)
\ar@{-->>}[r] \ar@{^{(}-->}[d] & \gr^{3,\pi_v}(\Psi_{\varrho},h) \ar@{^{(}->}[d] \\
\Fil^2(\Psi_{\varrho},h) \ar@{^{(}->}[r] & \Fil^3(\Psi_{\varrho},h) \ar@{->>}[r] &
\gr^3(\Psi_{\varrho},h),
}$$
et on Ècrit
$$0 \rightarrow \gr^3_{\pi_v}(\Psi_{\varrho},h) \longrightarrow \Fil^4(\Psi_{\varrho},h)/\Fil^{3,\pi_v}(\Psi_{\varrho},h) \longrightarrow \gr^4(\Psi_{\varrho},h) \rightarrow 0.$$

\begin{lemm} Si $h_z \neq 3g_{-1}(\varrho)$,
on a une surjection 
$$\Fil^4(\Psi_{\varrho},h)/\Fil^{3,\pi_v}(\Psi_{\varrho},h) 
\twoheadrightarrow \lexp {p+} j^{=h}_{!*} HT(\pi_v,\st_t(\pi_v))(\frac{1-t}{2}).$$
\end{lemm}

\begin{proof}
Le cas o˘ $s_\varrho$ ne divise pas $h_z$ (supposÈ Ègal ‡ $d$), dÈcoule du corollaire
\ref{coro-psi-p}, de l'hypothËse de rÈcurrence et la forme de $\gr^5(\Psi,h)$.
Supposons donc que $s_\varrho g_{-1}(\varrho)=h_z=d$: si $s_\varrho \leq 2$, il n'y a rien ‡ montrer
et pour $s_\varrho \geq 4$, on conclut comme ci-avant en remplaÁant l'appel ‡ \ref{coro-psi-p} par
\ref{prop-psi-p2}.
\end{proof}

Notons alors $\gr^4_{\pi_v}(\Psi_{\varrho},h)$ le quotient 
$\Fil^4(\Psi_\varrho,h) \twoheadrightarrow \lexp {p+} j^{=h}_{c,!*} HT_c(\pi_v,\st_t(\pi_v))(\frac{1-t}{2})$
puis
$$0 \rightarrow \gr^{4,\pi_v}(\Psi_{\varrho},h) \longrightarrow \gr^4(\Psi_{\varrho},h)
\longrightarrow \gr^4_{\pi_v}(\Psi_{\varrho},h) \rightarrow 0.$$
Soit alors $X$ le tirÈ en arriËre
$$\xymatrix{
\lexp {p(c)} j^{=h}_{\neq c,!*} HT_{\neq c}(\pi_v,\st_t(\pi_v))(\frac{t-1}{2}) \ar@{^{(}->}[r] & 
X \ar@{^{(}-->}[d] \ar@{-->>}[r] & \gr^{4,\pi_v}(\Psi_{\varrho},h) \ar@{^{(}->}[d] \\
\gr^3_{\pi_v}(\Psi_{\varrho},h) \ar@{^{(}->}[r] \ar@{=}[u] & 
\Fil^4(\Psi_{\varrho},h)/\Fil^{3,\pi_v}(\Psi_{\varrho},h) \ar@{->>}[r] & \gr^4(\Psi_{\varrho},h). 
}$$
Ainsi en Èchangeant l'ordre des termes de l'extension dÈfinissant $X$, on obtient
$$0 \rightarrow \gr^{4,\pi_v}(\Psi_{\varrho},h)' \longrightarrow X \longrightarrow
\lexp {p+} j^{=h}_{\neq c,!*} HT_{\neq c}(\pi_v,\st_t(\pi_v))(\frac{t-1}{2}) \rightarrow 0.$$

\rem on rappelle que, aprËs application du
foncteur exact $j^{=h,*}$, les rÈseaux obtenus en partant de $\gr^3(\Psi_{\varrho},h))$
ou du quotient $\Fil^4(\Psi_{\varrho},h) \twoheadrightarrow \lexp {p+} j^{=h}_{!*} 
HT(\pi_v,\st_t(\pi_v)(\frac{t-1}{2})$, sont donnÈs par $\lexp p h^0 j^{=h,*} \Psi_{\varrho}$
de sorte que le rÈseau de $HT(\pi_v,\st_t(\pi_v))$ n'est pas modifiÈ. 

Supposons, par l'absurde que le conoyau 
$$\begin{array}{ll}
T& =\coker \bigl ( \gr^{4,\pi_v}(\Psi_{\varrho},h)'  \htarrow_+ \gr^{4,\pi_v}(\Psi_{\varrho},h) \bigr ) \\
& =\coker \bigl (
\lexp {p(c)} j^{=h}_{\neq c,!*} HT_{\neq c}(\pi_v,\st_t(\pi_v))(\frac{t-1}{2}) \htarrow_+
\lexp {p+} j^{=h}_{\neq c,!*} HT_{\neq c}(\pi_v,\st_t(\pi_v))(\frac{t-1}{2}) \bigr ),
\end{array}$$ 
est non nul.

(a) En voyant $T$ comme quotient de $\gr^{4,\pi_v}(\Psi_{\varrho},h)$ comme ci-dessus,
d'aprËs (\ref{eq-prop-extension}), $T$ admettrait une filtration
$$(0)=\Fil^0(T) \subset \Fil^1(T) \subset \cdots \subset \Fil^r(T)=T$$
dont les graduÈs $\gr^k(T)$ s'Ècriraient comme le conoyau $T_\delta$ d'un bimorphisme
$$\lexp p j^{=\delta g_{i_\delta}(\varrho)}_{c,!*} HT(\pi_{v,\delta},\st_{\delta}(\pi_{v,\delta}))(\frac{\delta-1}{2})
\htarrow_+ \lexp {p+} 
j^{=\delta g_{i_\delta}(\varrho)}_{c,!*} HT(\pi_{v,\delta},\st_{\delta}(\pi_{v,\delta}))(\frac{\delta-1}{2})
\twoheadrightarrow T_\delta,$$
associÈ ‡ un sous-quotient irrÈductible de $\gr^{4,\pi_v}(\Psi_{\varrho},h)$, o˘
$\pi_{v,\delta} \in \scusp_{i_\delta}(\varrho)$ avec lorsque $i_\delta=-1$, $\delta < t$.
Or un tel $T_\delta[l]$ peut s'Ècrire comme extensions successives
de $P_{1,d-1}(F_v)$-reprÈsentations de la forme $\bar \pi \times 
\sigma_{P_{1,\delta g_{i_\delta}(\varrho)-1}(F_v)}$,
dont la dÈrivÈe vÈrifie la propriÈtÈ suivante: pour $i_\delta=-1$, cette dÈrivÈe est d'ordre 
$< tg_{-1}(\varrho)$ et pour $i_\delta \geq 0$, cet ordre est divisible par $g_0(\varrho)$.

(b) Comme conoyau de $\lexp {p(c)} j^{=h}_{\neq c,!*} HT_{\neq c}(\pi_v,\st_t(\pi_v))(\frac{t-1}{2}) 
\htarrow_+ \lexp {p+} j^{=h}_{\neq c,!*} HT_{\neq c}(\pi_v,\st_t(\pi_v))(\frac{t-1}{2}) \bigr )$,
$T[l]$ admet un sous-espace de la forme
$\tilde \pi_{|P_{1,d-tg_{-1}-1}(F_v)} \times \sigma_0$ o˘ on peut prendre $\sigma_0$ de dÈrivÈe
d'ordre $tg_{-1}(\varrho)$ ou $(t-1)g_{-1}(\varrho)$. de sorte que 
$\tilde \pi_{|P_{1,d-tg_{-1}-1}(F_v)} \times \sigma_0$ admet une sous-reprÈsentation de dÈrivÈe
d'ordre strictement plus grand que $(t-1)g_{-1}(\varrho)$ et non divisible par $g_0(\varrho)$,
ce qui contredit la propriÈtÈ obtenue en (a).

Ainsi on a $\lexp {p+} j^{=h}_{\neq c,!*} HT_{\neq c}(\pi_v,\st_t(\pi_v))=
\lexp {p(c)} j^{=h}_{\neq c,!*} HT_{\neq c}(\pi_v,\st_t(\pi_v))$, d'o˘ $HR_p(h)$ 
en faisant varier $c$.

\medskip

Il nous reste dÈsormais ‡ traiter le cas o˘ $h_z$ (supposÈ Ègal ‡ $d$) est Ègal ‡ $3g_{-1}(\varrho)$
et $h=g_{-1}(\varrho)$, i.e. ‡ montrer que $\lexp p j^{=g_{-1}(\varrho)}_{!*} j^{=g_{-1}(\varrho),*}
\gr^{g_{-1}(\varrho)}_!(\Psi_\varrho) \simeq \lexp {p+} j^{=g_{-1}(\varrho)}_{!*}  j^{=g_{-1}(\varrho),*}
\gr^{g_{-1}(\varrho)}_!(\Psi_\varrho)$. On part de la filtration donnÈe par $HR_\Psi(2g_{-1}(\varrho)+1)$
et le fait dÈj‡ dÈmontrÈ que $\gr^{2g_{-1}(\varrho),+}_{!,-1}(\Psi_\varrho)$ passe en suivant la flËche notÈe $\heartsuit$
pour former $\gr^5(\Psi_\varrho,g_{-1}(\varrho)+1)$ avec
$$0 \rightarrow X \longrightarrow
\Fil^4(\Psi_\varrho,g_{-1}(\varrho)+1) \longrightarrow \lexp {p+} j^{=g_{-1}(\varrho)}_{c,!*}
j^{=g_{-1}(\varrho),*}_c \gr^{g_{-1}(\varrho)}_{c,-1} \rightarrow 0.$$
Il suffit alors de montrer que l'extension $P$ dÈfinie ci-dessous
$$\xymatrix{
X \ar@{^{(}->}[r] \ar@{->>}[d] & \Fil^4(\Psi_\varrho,g_{-1}(\varrho)+1) \ar@{->>}[r] \ar@{-->>}[d] &
\lexp {p+} j^{=g_{-1}(\varrho)}_{c,!*} j^{=g_{-1}(\varrho),*}_c \gr^{g_{-1}(\varrho)}_{c,-1} \ar@{=}[d] \\
X/ \Fil^1(\Psi_\varrho,g_{-1}(\varrho)+1) \ar@{^{(}-->}[r] & P \ar@{->>}[r] & 
\lexp {p+} j^{=g_{-1}(\varrho)}_{c,!*} j^{=g_{-1}(\varrho),*}_c \gr^{g_{-1}(\varrho)}_{c,-1}
}$$
est scindÈe. On raisonne alors comme prÈcÈdemment en notant que 
\begin{itemize}
\item d'un cÙtÈ tout quotient de $l$-torsion de $\lexp {p+} j^{=g_{-1}(\varrho)}_{c,!*} j^{=g_{-1}(\varrho),*}_c 
\gr^{g_{-1}(\varrho)}_{c,-1}$ ne fait intervenir que des reprÈsentations du groupe mirabolique dont la 
dÈrivÈe est d'ordre $g_{-1}(\varrho)$,

\item et que de l'autre cÙtÈ tous les quotients de torsion associÈs aux extensions intermÈdiaires
de $X/ \Fil^1(\Psi_\varrho,g_{-1}(\varrho)+1)$, ont nÈcessairement une dÈrivÈe d'ordre strictement
plus grande que $g_{-1}(\varrho)$.
\end{itemize}
En ce qui concerne ce dernier point, il est clair pour toutes les extensions intermÈdiaires associÈes
‡ des $\pi_v \in \scusp_i(\varrho)$ pour $i \geq 0$ et il ne reste alors plus qu'‡ le vÈrifier pour le
systËme local concentrÈ aux points supersinguliers $\gr^{2g_{-1}(\varrho),-}_{!,-1} (\Psi_\varrho)$.
La fibre en un point supersingulier $z$ de 
$$0 \rightarrow  \gr^{2g_{-1}(\varrho),-}_{!,-1} (\Psi_\varrho) \longrightarrow
 \gr^{2g_{-1}(\varrho)}_{!,-1} (\Psi_\varrho) \longrightarrow  \gr^{2g_{-1}(\varrho),+}_{!,-1} (\Psi_\varrho)
 \rightarrow 0$$
 fournit, d'aprËs la proposition \ref{prop-fc-fpht}, pour chaque $\pi_v \in \scusp_{-1}(\varrho)$, 
 un diagramme
 $$\xymatrix{
 & \hi^{-1} i_z^* \gr^{2g_{-1}(\varrho)}_{!,\pi_v} (\Psi_\varrho)  \ar@{^{(}->}[d] \ar[dr]^\sim \\
\st_2(\pi_v\{\frac{-1}{2} \} ) \times (\pi_v \{ 1†\})_{M} \ar@{^{(}->}[r] \ar[dr]^\sim &
\bigl ( \st_2(\pi_v\{\frac{-1}{2} \} ) \times (\pi_v \{ 1†\}) \bigr)_{M} \ar@{->>}[r] 
\ar@{->>}[d]&
 \st_2(\pi_v\{\frac{-1}{2} \} )_{M}  \times (\pi_v \{ 1†\}) \\
& \hi^0 i_z^*  \gr^{2g_{-1}(\varrho),-}_{!,\pi_v} (\Psi_\varrho) 
}$$
o˘ on a notÈ simplement $M$ pour le mirabolique associÈ et on a omis les termes galoisiens.
Ainsi le rÈseau de $\st_3(\pi_v)$ de $\hi^0 i_z^*  \gr^{2g_{-1}(\varrho),-}_{!,\pi_v} (\Psi_\varrho)$
vÈrifie la propriÈtÈ suivante d'aprËs \cite{zelevinski1} proposition 4.13 (d): toute sous-reprÈsentation
irrÈductible de sa rÈduction modulo $l$ a une dÈrivÈe d'ordre $2g_{-1}(\varrho)$ ou $3g_{-1}(\varrho)$.

\end{proof}

\begin{lemm}
Si $HR_\Psi(h+1)$ et $HR_p(h)$ sont vÈrifiÈes alors $HR_\Psi(h)$ aussi.
\end{lemm}

\begin{proof}
On filtre $\gr^3(\Psi_\varrho,h)_{-1}$ de sorte que chacun des graduÈs soit de la forme
$\lexp p j^{=tg_{-1}(\varrho)}_{!*} HT(\pi_v,\st_t(\pi_v))(\frac{1-t}{2})$ pour $\pi_v$ dÈcrivant
$\scusp_{-1}(\varrho)$. Comme dans la preuve du lemme prÈcÈdant, chacun de ces graduÈs
\og passe ‡ travers \fg{} la flËche notÈe $\heartsuit$ dans la figure (\ref{fig-1}),
ce qui permet de construire $\Fil^4(\Psi_\varrho,h-1)$ avec $\gr^5(\Psi_\varrho,h-1)$
vÈrifiant les hypothËses demandÈes.

Il s'agit alors de faire remonter $\gr^2(\Psi_\varrho,h)$ le long de la flËche notÈe
$\diamondsuit$ dans la figure \ref{fig-1}.
Soit alors
$$\xymatrix{
\Fil^1(\Psi_{\varrho},h) \ar@{^{(}->}[r] \ar@{->>}[d] & \Fil^2(\Psi_{\varrho},h) \ar@{->>}[r] 
\ar@{-->>}[d] & \gr^2(\Psi_{\varrho},h)\ar@{=}[d] \\
\gr^{h-1,+}_{!,\neq c,-1}(\Psi_{\varrho})\ar@{^{(}-->}[r] & P \ar@{->>}[r] & 
\gr^2(\Psi_{\varrho},h)
}$$
et il s'agit de montrer que
\addtocounter{smfthm}{1}
\begin{equation} \label{eq-sec-scinde}
0 \rightarrow  \gr^{h-1,+}_{!,\neq c,-1}(\Psi_{\varrho}) \longrightarrow P 
\longrightarrow \gr^2(\Psi_{\varrho},h) \rightarrow 0
\end{equation}
est scindÈe. Dans le cas contraire, on aurait
$$0 \rightarrow \widetilde \gr^2(\Psi_{\varrho},h) \longrightarrow P \longrightarrow
\widetilde \gr^{h-1,+}_{!,\neq c,-1}(\Psi_{\varrho}) \rightarrow 0$$
avec, puisque les rÈseaux ne sont pas modifiÈs,
$$\gr^{h-1,+}_{!,\neq c,-1}(\Psi_{\varrho}) \htarrow_+ \widetilde \gr^{h-1,+}_{!,\neq c,-1}
(\Psi_{\varrho}) \twoheadrightarrow T \neq 0.$$
Or rappelons que $\Fil^2(\Psi_{\varrho},h)$ a ÈtÈ construit de sorte que la partie libre de son
$\lexp p \hi^0 i_z^* \Fil^2(\Psi_\varrho,h)$ est triviale. Ainsi si $T$ Ètait non nul, on en dÈduirait que
$\lexp p \hi^0 i_z^* \Fil^2(\Psi_\varrho,h)$ serait de torsion, ce qui n'est pas par hypothËse.

%
%
%
%
%
\end{proof}

\subsection{Preuve du thÈorËme principal}
\label{para-hyp2}

D'aprËs le thÈorËme de comparaison de Berkovich, cf. \cite{berk2}, le thÈorËme \ref{theo-LT}
dÈcoule de l'ÈnoncÈ suivant.

\begin{theo} \label{theo-glob2}\phantomsection
Pour tout $i$, les $\hi^i\Psi_{\varrho}$ sont sans torsion.
\end{theo}

Sur $\overline \Qm_l$, cf. la proposition \ref{prop-QL-Psi}, les graduÈs de la filtration de stratification
de $i^1_{c,*} \lexp p \hi^0 i^{1,*}_{c} \bigl ( \Psi_{\IC,\overline \Zm_l} \bigr )$ sont des faisceaux pervers
d'Harris-Taylor et la suite spectrale calculant ses faisceaux de cohomologie ‡ partir de ceux de ses
graduÈs, dÈgÈnËre en $E_1$. Ainsi sur $\overline \Zm_l$, d'aprËs \ref{prop-fc-fpht}, il suffit de montrer
que ces graduÈs sont les $p$-faisceaux pervers d'Harris-Taylor.
Ainsi pour $d=2$, sachant que les seuls faisceaux pervers d'Harris-Taylor non ponctuels sont associÈs
‡ des caractËres, le rÈsultat dÈcoule trivialement du lemme \ref{lem-ext0}.

On reprend la suite exacte courte (\ref{eq-sec-psi}) et la filtration du faisceau pervers
libre $\lexp p \hi^0 i^{1,*}_{c} \bigl ( \Psi_{\varrho} \bigr )$ dont les graduÈs $\gr^k_c$
vÈrifient
$$\lexp p j^{=tg_i(\varrho)}_{c,!*} HT_c(\pi_v,\st_t(\pi_v))(\frac{1-t}{2}) \htarrow_+ \gr^k_c 
\twoheadrightarrow T^k_c$$
pour $i \geq -1$ et $\pi_v \in \scusp_i(\varrho)$. Supposons par l'absurde qu'il existe un tel graduÈ
tel que le bimorphisme ci-dessus ne soit pas un isomorphisme et notons $z$ un point gÈnÈrique
de dimension maximale tel que la fibre en $z$ d'un tel $T^k_c$ soit non nulle.\footnote{On peut
pour simplifier considÈrer comme dans le paragraphe prÈcÈdent que $z$ est un point supersingulier.}
Soit alors $i$ minimal tel la fibre en $z$ d'un $T^k_c$ soit non nulle. D'aprËs le paragraphe prÈcÈdent,
on a nÈcessairement $i \geq 0$. En outre on en dÈduit que $\lexp p h^0 i_z^* \Psi_{\bar \tau}$
admet alors un sous-quotient de la forme $\bar \pi \times \sigma_{|P_{1,tg_i(\bar \tau)}(F_v)}$ o˘ 
\begin{itemize}
\item $\sigma$ est un sous-quotient
de la rÈduction modulo $l$ de $\st_t(\pi_v)$ et 

\item $\bar \pi$ est, d'aprËs la proposition \ref{prop-dec-pervers}, un sous-quotient 
irrÈductible de $V_{\bar \rho}(r+tm(\varrho)l^i ,< \underline{\delta_i})$.

\item En outre tous les sous-quotients irrÈductibles de $\lexp p h^0 i_z^* \Psi_{\bar \tau}$, ont alors
une dÈrivÈe d'ordre $\geq g_i(\varrho)$.
\end{itemize}
Rappelons la suite exacte courte
$$0 \rightarrow \bar \pi_{|P_{1,h-1}(F_v)} \times \sigma \longrightarrow 
(\bar \pi \times \sigma)_{|P_{1,h+tg_i(\varrho)-1}(F_v)} \longrightarrow 
\bar \pi \times \sigma_{|P_{1,tg_i(\varrho)}(F_v)} \rightarrow 0,$$
o˘, puisque les supports cuspidaux sont disjoints, $\bar \pi \times \sigma$ est irrÈductible.
Ainsi comme $\lexp p h^0 i_z^* \Psi_{\varrho}$ est $GL_{h+tg_i(\varrho)}(F_v)$ Èquivariant, on 
en dÈduit qu'il admet aussi $ \bar \pi_{|P_{1,h-1}(F_v)} \times \sigma$ comme sous-quotient
et que donc il admet un sous-quotient irrÈductible ayant une dÈrivÈe d'ordre $<g_i(\bar \tau)$, d'o˘
la contradiction.

\appendix

\renewcommand{\theequation}{\Alph{section}.\arabic{subsection}.\arabic{equation}}

\section{Rappels sur les reprÈsentations}
\label{para-rep}

\subsection{de $GL_d(K)$ ‡ coefficients dans $\overline \Qm_l$}
\label{para-rappel-rep}

Notons $K$ un corps local non archimÈdien dont le corps rÈsiduel est de cardinal $q$ une puissance de $p$.
Une racine carrÈe $q^{\frac{1}{2}}$ de $q$ dans $\overline \Qm_l$ Ètant fixÈe,
pour $k \in \frac{1}{2} \Zm$, nous noterons
$\pi\{ k \}$ la reprÈsentation tordue de $\pi$ o˘ l'action de $g \in GL_n(K)$ est donnÈe par $\pi(g) \nu(g)^k$ avec
$\nu: g \in GL_n(K) \mapsto q^{-\val (\det g)}$.

\begin{defi}
Pour $V$ un sous-espace vectoriel de dimension $h$ de $K^n$, soit $P_V$
le sous-groupe parabolique associÈ et $N_V$ son radical unipotent, i.e. l'ensemble
des $g \in GL_n(K)$ tels que le noyau de $g-\Id$ contienne $V$ et son image soit contenue
dans $V$. 
\end{defi}

\begin{notas} \label{nota-va} \phantomsection
\begin{itemize}
\item Dans le cas o˘ dans la dÈfinition prÈcÈdente, $V$ est engendrÈ par les $h$ premiers
vecteurs de la base canonique, $V$ sera notÈ $\overline{1_h}$ et $P_V$ par $P_{h,d-h}$.

\item Pour tout $a \in GL_d(K) / P_{h,d-h}(K)$, on rappelle qu'on note encore 
$a$ le sous-espace vectoriel obtenu comme l'image par
$a$ de $\Vect (e_1,\cdots,e_h)$, o˘ $(e_i)_{1†\leq i \leq d}$ dÈsigne
la base canonique de $K^d$. On notera aussi $P_a(K)=aP_{h,d-h}(K)a^{-1}$ 
le parabolique associÈ et $N_a=a N_{h,d}(K) a^{-1}$
son sous-groupe unipotent. Le facteur $GL_h(K)$ de $P_a(K)$ est appelÈ \og son facteur
infinitÈsimal \fg.
%
%
\end{itemize}
\end{notas}

\rem afin d'allÈger les notations, on notera $\dim a$ pour $\dim V_a$.

\begin{defi}
Soit $P=MN$ un parabolique standard de $GL_n$ de LÈvi $M$ et de radical unipotent $N$.
On note $\delta_P:P(K) \rightarrow \overline \Qm_l^\times$ l'application dÈfinie par
$\delta_P(h)=|\det (\ad(h)_{|\lie N})|^{-1}.$
Pour $(\pi_1,V_1)$ et $(\pi_2,V_2)$ des reprÈsentations de respectivement $GL_{n_1}(K)$ 
et $GL_{n_2}(K)$, et $P_{n_1,n_2}$ le parabolique standard de $GL_{n_1+n_2}$ de Levi 
$M=GL_{n_1} \times GL_{n_2}$ et de radical unipotent $N$, $\pi_1 \times \pi_2$
dÈsigne l'induite parabolique normalisÈe de $P_{n_1,n_2}(K)$ ‡ $GL_{n_1+n_2}(K)$ de 
$\pi_1 \otimes \pi_2$ c'est ‡ dire
l'espace des fonctions $f:GL_{n_1+n_2}(K) \rightarrow V_1 \otimes V_2$ telles que
$$f(nmg)=\delta_{P_{n_1,n_2}}^{-1/2}(m) (\pi_1 \otimes \pi_2)(m) \Bigl ( f(g) \Bigr ),
\quad \forall n \in N, ~\forall m \in M, ~ \forall g \in GL_{n_1+n_2}(K).$$
\end{defi}

Rappelons qu'une reprÈsentation irrÈductible $\pi$ de $GL_n(K)$ est dite \textit{cuspidale} 
(resp. \textit{supercuspidale}) si elle n'est pas isomorphe ‡
un sous-quotient (resp. ‡ un sous-espace)
d'une induite parabolique propre. D'aprËs \cite{vigneras-induced} \S V.4, 
la rÈduction modulo $l$ d'une reprÈsentation irrÈductible cuspidale est encore irrÈductible cuspidale 
mais pas nÈcessairement supercuspidale.

\begin{notas}
Soient $g$ un diviseur de $d=sg$ et $\pi$ une reprÈsentation cuspidale
irrÈductible de $GL_g(K)$. 
\begin{itemize}
\item L'unique quotient (resp. sous-reprÈsentation) irrÈductible de
$\pi\{ \frac{1-s}{2} \} \times \pi\{\frac{3-s}{2} \} \times \cdots \times \pi\{ \frac{s-1}{2} \}$
est notÈ $\st_s(\pi)$ (resp. $\speh_s(\pi)$).

\item L'unique sous-espace (resp. quotient) irrÈductible de $\st_t(\pi_v \{ \frac{-r}{2} \} )
\times \speh_r (\pi_v \{ \frac{t}{2} \} )$ (resp. de $\st_{t-1}(\pi_v \{ \frac{-r-1}{2} \} ) \times \speh_{r+1}
(\pi_v \{ \frac{t-1}{2} \} )$) est notÈ $LT_{\pi_v}(t-1,r)$.
\end{itemize}
\end{notas}

%
%
%
%

\begin{nota}
Pour $n \geq 2$, on note
$M_n(K)$ le sous-groupe de $P_{1,n-1}(K)$ dont le premier coefficient en haut ‡ gauche
est Ègal ‡ $1$. 
\end{nota}

\rem Quitte ‡ tordre les actions par $g \mapsto \sigma (\lexp t g^{-1} ) \sigma^{-1}$, 
o˘ $\sigma$ est la matrice de permutation associÈ au cycle $(12\cdots n)$,
on reconnait le traditionnel groupe mirabolique dont le radical unipotent $V_{n-1}(K)$ 
est abÈlien isomorphe ‡ $(K^{\times})^{n-1}$. 

%

\begin{lemm} \label{lem-mirabolique}
Soit $\pi$ une reprÈsentation irrÈductible cuspidale de $GL_g(K)$, alors en tant que reprÈsentation
du parabolique $M_{(t+s)g}(K)$, on a des isomorphismes
$$\st_t(\pi\{ -\frac{s}{2} \} )_{|M_{tg}(K)} \times \speh_s (\pi \{ \frac{t}{2} \} ) \simeq
LT_{\pi} (t-1,s)_{|M_{(t+s)g}(K)},$$
et
$$\st_t(\pi\{ -\frac{s}{2} \} ) \times \speh_s (\pi \{ \frac{t}{2} \} )_{|M_{sg}(K)} \simeq 
LT_{\pi} (t,s-1)_{|M_{(t+s)g}(K)},$$
o˘ dans le premier isomorphisme, l'induite parabolique est relativement ‡ 
$$\left ( \begin{array}{cc} M_{tg} & U \\ 0 & GL_{sg} \end{array} \right )$$
alors que dans le deuxiËme, il s'agit de l'induite ‡ support compact relativement ‡
$$\left ( \begin{array}{ccc} 1 & 0 & V_{sg-1} \\ 0 & GL_{tg} & U \\ 0 & 0 & GL_{sg-1} \end{array} \right ).$$
\end{lemm}

\rem si dans le lemme prÈcÈdent, on remplace la reprÈsentation de Steinberg $\st_t(\pi)$ par 
$LT_{\pi}(\delta,t-\delta-1)$, le membre de droite dans le premier isomorphisme devient
$LT_{\pi}(\delta,t-\delta-1+s)$.

\begin{proof}
On rappelle cf. par exemple \cite{vigneras-livre} \S III.1.10, que
\begin{multline*}
0 \rightarrow 
\st_t(\pi\{ -\frac{s}{2} \} ) \times \speh_s (\pi \{ \frac{t}{2} \} )_{|M_{sg}(K)} \\
\longrightarrow
\Bigl ( \st_t(\pi\{ -\frac{s}{2} \} )  \times \speh_s (\pi \{ \frac{t}{2} \} ) \Bigr )_{|M_{(t+s)g}(K)} \\
\longrightarrow
 \st_t(\pi\{ -\frac{s}{2} \} )_{|M_{tg}(K)} \times \speh_s (\pi \{ \frac{t}{2} \} )
\rightarrow 0,
\end{multline*}
o˘ le deuxiËme terme est l'induite parabolique relativement ‡ 
$\left ( \begin{array}{cc} M_{tg} & U \\ 0 & GL_{sg} \end{array} \right )$
et le premier, l'induite ‡ support compact relativement ‡
$\left ( \begin{array}{ccc} 1 & 0 & V_{sg-1} \\ 0 & GL_{tg} & U \\ 0 & 0 & GL_{sg-1} \end{array} \right )$.
En outre pour tout $k \geq 0$, la dÈrivÈe d'ordre $k$ de 
$\st_t(\pi\{ -\frac{s}{2} \} )_{|M_{tg}(K)} \times \speh_s (\pi \{ \frac{t}{2} \} )$ est, cf. 
\cite{vigneras-livre} p153, donnÈe par celle de 
$\st_t(\pi\{ -\frac{s}{2} \} )_{|M_{tg}(K)}$ induite avec $\speh_s (\pi \{ \frac{t}{2} \} )$, i.e.
$$
\Bigl ( \st_t(\pi\{ -\frac{s}{2} \} )_{|M_{tg}(K)} \times \speh_s (\pi \{ \frac{t}{2} \} ) \Bigr )^{(k)} 
\simeq \Bigl ( \st_t(\pi\{ -\frac{s}{2} \})_{|M_{tg}(K)} \Bigr )^{(k)} \times 
\speh_s (\pi \{ \frac{t}{2} \} ).
$$
Rappelons que la dÈrivÈe d'ordre $k$ de $\st_t(\pi)$ est nulle sauf si $k$ est de la forme
$\delta g$ avec $0 \leq \delta \leq t$ auquel cas elle est isomorphe ‡ 
$\st_{t-\delta}(\pi \{ \frac{\delta}{2} \})$. Ainsi en raisonnant par rÈcurrence sur $t$, on en dÈduit que
$\st_t(\pi\{ -\frac{s}{2} \} )_{|M_{tg}(K)} \times \speh_s (\pi \{ \frac{t}{2} \} )$ et
$LT_{\pi_v}(t-1,s)_{|M_{(t+s)g}(K)} $ ont les mÍmes dÈrivÈes et qu'elles sont toutes d'ordre $\leq tg$.
ConsidÈrons alors le morphisme composÈ
\begin{multline*}
LT_{\pi_v}(t-1,s)_{|M_{(t+s)g}(K)} \hookrightarrow 
\Bigl ( \st_t(\pi\{ -\frac{s}{2} \} )  \times \speh_s (\pi \{ \frac{t}{2} \} ) \Bigr )_{|M_{(t+s)g}(K)} \\
\twoheadrightarrow 
\st_t(\pi\{ -\frac{s}{2} \} )_{|M_{tg}(K)} \times \speh_s (\pi \{ \frac{t}{2} \} )
\end{multline*}
et notons $K \hookrightarrow \st_t(\pi\{ -\frac{s}{2} \} ) \times \speh_s (\pi \{ \frac{t}{2} \} )_{|M_{sg}(K)}$
son noyau. D'aprËs \cite{zelevinski2} proposition 5.3 et corollaire 6.8,
$\st_t(\pi\{ -\frac{s}{2} \} ) \times \speh_s (\pi \{ \frac{t}{2} \} )_{|M_{sg}(K)}$ est homogËne, i.e.
tout sous-espace $M_{(t+s)g}(K)$-Èquivariant irrÈductible a une dÈrivÈe d'ordre $(t+1)g$, or on vient
de voir que les dÈrivÈes de $LT_{\pi}(t-1,s)_{|M_{(t+s)g}(K)}$ sont d'ordre $\leq tg$, de sorte que
$$LT_{\pi}(t-1,s) \hookrightarrow \st_t(\pi\{ -\frac{s}{2} \} )_{|M_{tg}(K)} \times 
\speh_s (\pi \{ \frac{t}{2} \} ).$$
Comme ces deux termes ont les mÍmes dÈrivÈes, cette injection est un isomorphisme.
%
%
\end{proof}

\subsection{RÈduction modulo $l$ d'une Steinberg gÈnÈralisÈe d'aprËs 
\texorpdfstring{\cite{boyer-repmodl}}{Lg}}
\label{para-RI}

On rappelle que $l$ et $p$ dÈsignent des nombres premiers distincts et que $q$ est une puissance de $p$. On note
$e_l(q)$ l'ordre de l'image de $q$ dans $\Fm_l^\times$.
Afin de simplifier la lecture, dans la suite on utilisera la lettre $\pi$ pour dÈsigner
une $\overline \Qm_l$-reprÈsentation entiËre et les lettres $\varrho$ et $\rho$ pour
des $\overline \Fm_l$-reprÈsentations.

\begin{defi} \phantomsection \label{defi-mrho}
Une reprÈsentation $\varrho$ de $GL_n(K)$ est dite 
\begin{itemize}
\item \emph{cuspidale} si pour tout sous-groupe parabolique propre $P$ de $GL_n(K)$, 
$J_{P}(\varrho)$ est nul.

\item  Elle sera dite \emph{supercuspidale} si elle n'est pas un sous-quotient d'une induite
parabolique propre. Pour $\Lambda=\overline \Qm_l$ ou $\overline \Fm_l$, on notera 
$\scusp_\Lambda(g)$ l'ensemble des classes d'isomorphismes des reprÈsentations supercuspidales
de $GL_g(K)$.
\end{itemize}
\end{defi}

\rem sur $\overline \Qm_l$, les notions de cuspidales et de supercuspidales coÔncident, ce qui n'est
pas le cas sur $\overline \Fm_l$, cf. ci-aprËs.

\begin{prop} \label{prop-red-modl} (cf. \cite{vigneras-livre} III.5.10)
La rÈduction modulo $l$ d'une $\overline \Qm_l$-reprÈsentation irrÈductible cuspidale entiËre
de $GL_g(K)$ est irrÈductible cuspidale.
\end{prop}

\begin{prop} \cite{dat-jl} \S 2.2.3 \\
Soit $\pi$ une reprÈsentation irrÈductible cuspidale entiËre. Alors pour tout $s \geq 1$, la rÈduction 
modulo $l$ de $\speh_{s}(\pi)$ est irrÈductible.
\end{prop}

\begin{defi} Une $\overline \Fm_l$-reprÈsentation irrÈductible est dite $l$-Speh (resp. $l$-superSpeh)
si c'est la rÈduction modulo $l$ d'une $\overline \Qm_l$-reprÈsentation entiËre $\speh_s(\pi)$
pour $\pi$ irrÈductible cuspidale (resp. et dont la rÈduction modulo $l$ de $\pi$ est supercuspidale). 
\end{defi}

\begin{nota}
On notera  $\epsilon(\varrho)$ le cardinal de la droite de Zelevinski de
$\varrho$, i.e. de l'ensemble des classes d'Èquivalence $\{ \varrho\{ i\} ~/~ i \in \Zm \}$.
 On pose alors  cf. \cite{vigneras-induced} p.51
$$m(\varrho)=\left\{ \begin{array}{ll} \epsilon(\varrho), & \hbox{si } \epsilon(\varrho)>1; \\ l, & \hbox{sinon.} \end{array} \right.$$
\end{nota}

\rem $\epsilon(\varrho)$ est un diviseur de $e_l(q)$.

\begin{defi} \phantomsection \label{defi-nd}
…tant donnÈ un multi-ensemble $\underline s=\{ \rho_1^{n_1},\cdots,\rho_r^{n_r} \}$
de reprÈsentations cuspidales, on note d'aprËs
\cite{vigneras-induced} V.7, $\st(\underline s)$ l'unique reprÈsentation non dÈgÈnÈrÈe de l'induite
$$\rho(\underline s):=\overbrace{\rho_1 \times \cdots \times \rho_1}^{n_1} \times \cdots \times \overbrace{\rho_r \times \cdots \times \rho_r}^{n_r}.$$
\end{defi}

\rem d'aprËs \cite{vigneras-induced} V.7, toutes les reprÈsentations non dÈgÈnÈrÈes sont de cette forme.

\begin{nota} \phantomsection \label{nota-st}
Pour $\rho$ une reprÈsentation irrÈductible cuspidale et $s \geq 1$, on note
$\underline s(\rho)$ le multi-segment $\{ \rho, \rho \{ 1 \}, \cdots , \rho\{ s-1 \} \}$ et comme dans
\cite{vigneras-induced} V.4, $\st_s(\rho):=\st(\underline s(\rho))$.
\end{nota}

\begin{prop} \cite{vigneras-induced} V.4
Soit $\varrho$ une reprÈsentation irrÈductible cuspidale.
La reprÈsentation non dÈgÈnÈrÈe $\st_s(\varrho)$ est cuspidale si et seulement $s=1$ ou $m(\varrho)l^k$ pour $k \geq 0$.
\end{prop}

\rem d'aprËs \cite{vigneras-livre} III-3.15 et 5.14, toute reprÈsentation irrÈductible cuspidale est de la forme
$\st_s(\varrho)$ pour $\varrho$ irrÈductible supercuspidale et $s=1$ ou de la forme $m(\varrho)l^k$ avec 
$k \geq 0$. 

\begin{nota} \phantomsection \label{nota-rhoi}
Soit $\varrho$ une reprÈsentation irrÈductible cuspidale de $GL_g(K)$; on note 
$\varrho_{-1}=\varrho$ et pour tout $i \geq 0$, $\varrho_i=\st_{m(\varrho)l^i}(\varrho)$.
\end{nota}

\begin{defi}
On dira d'une $\overline \Qm_l$-reprÈsentation irrÈductible cuspidale entiËre qu'elle est
de type $\varrho$ si, ‡ torsion par un caractËre non ramifiÈ prËs, 
sa rÈduction modulo $l$ est de la forme $\varrho_i$ pour $i \geq -1$.
\end{defi}

\begin{nota}
Soit $s \geq 1$ un entier et $\varrho$ une reprÈsentation irrÈductible cuspidale de $GL_g(K)$. Soit
$\IC_\varrho(s)$ l'ensemble des suites $(m_{-1},m_0,\cdots)$ ‡ valeurs dans $\Nm$ telles que
$$s=m_{-1}+m(\varrho)\sum_{k=0}^{+\oo} m_k l^k.$$
On notera $\lg_\varrho(s)$ le cardinal de $\IC_{\varrho}(s)$. On munit enfin $\IC_{\varrho}(s)$
d'une relation d'ordre
$$(m_{-1},m_0,\cdots) > (m'_{-1},m'_0,\cdots) \Leftrightarrow \exists k \geq -1 \hbox{ tel que }
\forall i > k: m_i=m'_i \hbox{ et } m_k> m'_k.$$
\end{nota}

\begin{defi} \phantomsection \label{defi-spm}
Pour $\underline i=(i_{-1},i_0,\cdots) \in \IC_\varrho(s)$, on dÈfinit
$$\st_{\underline i}(\varrho):= \st_{\underline{i_{-1}^-}}(\varrho_{-1}) \times
\st_{\underline{i_{0}^-}}(\varrho_0) \times \cdots \times \st_{\underline{i_u^-}}(\varrho_u)$$
o˘ $i_k=0$ pour tout $k >u$ et o˘ les $\varrho_i$ sont dÈfinis en \ref{nota-rhoi}.
\end{defi}

\begin{theo} \phantomsection \label{theo-ss-quotient}
Soit $\pi$ une $\overline \Qm_l$-reprÈsentation irrÈductible cuspidale entiËre
de $GL_g(K)$ et $\varrho$ sa rÈduction modulo $l$.
Dans le groupe de Grothendieck des $\overline \Fm_l$-reprÈsentations de $GL_{sg}(K)$, on a l'ÈgalitÈ suivante:
$$r_l \Bigl ( \st_s(\pi) \Bigr )=\sum_{\underline i \in \IC_\varrho(s)} \st_{\underline i}(\varrho).$$
Par ailleurs pour tout $\underline i \in \IC_\varrho(s)$ et pour tout parabolique $P$, $J_{P} \Bigl (
\st_{\underline i} (\varrho) \Bigr )$ est Ègal ‡ la somme des constituants irrÈductibles de $\varrho$-niveau $\underline i$ de $r_l \Bigl (J_{P}(\st_s(\pi)) \Bigr )$.
\end{theo}

\rem pour $s<m(\varrho)$,
la rÈduction modulo $l$ de $\st_{s}(\pi)$ est irrÈductible.

\begin{defi}
On dira que  $l$ est \textit{banal} pour $GL_d(K)$ si $e_l(q)>d$.
\end{defi}

\rem dans le cas banal toute reprÈsentation cuspidale est supercuspidale, i.e. $m(\varrho) < s$ 
avec les notations prÈcÈdentes.

Pour $\pi$ une reprÈsentation irrÈductible cuspidale entiËre de $GL_g(K)$,
comme, d'aprËs \ref{prop-red-modl},
sa rÈduction modulo $l$, notÈe $\varrho$, est irrÈductible, on en dÈduit
qu'‡ isomorphismes prËs, $\pi$
possËde un unique rÈseau stable, cf. par exemple \cite{bellaiche-ribet} proposition 3.3.2 et la remarque qui suit.

\begin{defi} \phantomsection \label{defi-RI} (cf. \cite{boyer-repmodl})
…tant donnÈ un rÈseau de $\st_t(\pi)$, la surjection (resp. l'injection)
$$\st_t(\pi) \overrightarrow{\times} \pi  \twoheadrightarrow \st_{t+1}(\pi), 
$$
induit un rÈseau de $\st_{t+1}(\pi)$ de sorte que par rÈcurrence on dispose d'un rÈseau  
$RI_{\overline \Zm_l,-}(\pi,t)$ que l'on qualifie de \textit{rÈseau d'induction}. On note alors
$$RI_{\overline \Fm_l,-}(\pi,t):= RI_{\overline \Zm_l,-}(\pi,t) \otimes_{\overline \Zm_l} \overline \Fm_l, 
$$
\end{defi}

\begin{prop} \phantomsection \label{prop-defi-Vk} 
(cf. \cite{boyer-repmodl} propositions 3.2.2 et 3.2.7)
Pour tout $0 \leq k \leq \lg_\varrho(s)$, il existe une sous-reprÈsentation $V_{\varrho,-}(s;k)$ 
de longueur $k$ de $RI_{\overline \Fm_l,-}(\pi,s)$
$$(0)=V_{\varrho,\pm}(s;0) \varsubsetneq V_{\varrho,\pm}(s;1) \varsubsetneq \cdots \varsubsetneq
V_{\varrho,\pm}(s;\lg_\varrho(s))= RI_{\overline \Fm_l,-}(\pi,s),$$
dÈfinie de sorte que l'image de $V_{\varrho,-}(s;k)$ dans le groupe de Grothendieck est telle
que tous ses constituants irrÈductibles sont
de $\varrho$-niveau strictement plus grand que n'importe quel constituant irrÈductible de
$W_{\varrho,-}(s;k):= V_{\varrho,-}(s;\lg_\varrho(s))/V_{\varrho,-}(s;k)$.
\end{prop}

\begin{nota} \phantomsection \label{nota-V}
Une reprÈsentation irrÈductible $\varrho$ Ètant fixÈe ainsi qu'un entier $s$,
pour $k\geq 0$ tel que $m(\varrho)l^k \leq s$, on note:
\begin{itemize}
\item $\underline{\delta_k}=(0,\cdots,0,1,0,\cdots) \in \IC_\varrho(s)$ et

\item pour tout $t$ tel que $m(\varrho)l^kt \leq s$,
$V_{\varrho,-}(s,\geq t.\underline{\delta_k})$ le sous-espace $V_{\varrho,-}(s,\lg_\varrho(s))$ 
dÈfini ci-dessus tel que
tous les constituants irrÈductibles de $V_{\varrho,-}(s,\lg_\varrho(s))$ sont de $\varrho$-niveau plus
grand ou Ègal ‡ $t.\underline{\delta_k}$.
\end{itemize}
\end{nota}

\subsection{ReprÈsentations de \texorpdfstring{$D_{K,d}^\times$}{Lg} ‡ coefficients dans 
\texorpdfstring{$ \overline \Fm_l$}{Lg} et leurs relËvements}
\label{para-repD}

Soient $D_{K,d}$ l'algËbre ‡ division centrale sur $K$ d'invariant $1/d$,
$\DC_{K,d}$ son ordre maximal de radical $\PC_{K,d}$:
$1+\PC_{K,d} \subset \DC_{K,d}^\times \subset D_{K,d}^\times/\varpi^\Zm$
de quotients successifs $\Fm_{q^d}^\times$ et $\Zm/d\Zm$.

\begin{nota} \label{nota-JL} \phantomsection
Pour $\pi$ une reprÈsentation irrÈductible cuspidale de $GL_g(K)$ et $s \geq 1$, on note
$\pi[s]_D$ la reprÈsentation irrÈductible de $D_{K,sg}^\times$ image de $\st_s(\pi^\vee)$
par la correspondance de Jacquet-Langlands.
\end{nota}

\rem toute reprÈsentation irrÈductible de $D_{K,d}^\times$ s'Ècrit de maniËre unique
$\pi[s]_D$ pour $s|d$ et $\pi$ une reprÈsentation irrÈductible cuspidale de $GL_{d/s}(K)$.

\begin{nota} On notera $\RC_{ \overline \Fm_l}(h)$ l'ensemble des classes d'Èquivalence des $\overline \Fm_l$-
reprÈsentations irrÈductibles de $D_{v,h}^\times$.
\end{nota}

¿ torsion par un caractËre non ramifiÈ prËs, toute reprÈsentation 
irrÈductible $\tau$ de $D_{K,d}^\times$ se factorise par $D_{K,d}^\times/\varpi^\Zm$,
on choisit alors un facteur irrÈductible $\rho$ de $\tau_{|1+\PC_{K,d}}$ et on note
$N_\rho$ le normalisateur de sa classe d'isomorphisme dans 
$D_{K,d}^\times/\varpi^\Zm$. Comme $1+\PC_{K,d}$ est un pro-$p$-groupe, la dimension de $\rho$
est une puissance de $p$ de sorte que, un $p$-Sylow de $N_\rho/(1+\PC_{K,d})$
Ètant cyclique, $\rho$ admet un prolongement $\widetilde \rho$ ‡ $N_\rho$, cf. 
\cite{vigneras-apropos} lemme 1.19. Soit alors $\zeta$ une sous-reprÈsentation irrÈductible
de $N_\rho$ agissant sur $\hom_{1+\PC_{K,d}}(\widetilde \rho,\tau)$ laquelle d'aprËs 
la preuve de la proposition 2.3.2 de \cite{dat-jl} que nous suivons, est de la forme 
$\zeta \simeq \ind_J^{N_\rho}(\chi)$ pour un groupe $J$ contenant $N_\rho \cap \OC_{K,d}^\times$
et un caractËre $\chi$ de $J$ trivial sur $1+\PC_{K,d}$. 

\begin{prop} (cf. \cite{dat-jl} proposition 2.3.2) \label{prop-repD}\phantomsection \\
Avec les notations prÈcÈdentes, on a 
$$\tau = \ind_{J}^{D_{K,d}^\times/\varpi^\Zm} \bigl ( \widetilde \rho_{|J} \otimes \chi \bigr )$$
avec $J=N_\rho \cap N_\chi$ o˘ $N_\chi$ dÈsigne le normalisateur de $\chi$.
La rÈduction $r_l(\tau)$ de $\tau$ modulo $l$ est de la forme
$$\ind_{J}^{D_{K,d}^\times/\varpi^\Zm} \bigl ( r_l(\widetilde \rho_{|J}) \otimes r_l(\chi) \bigr )$$
o˘ $r_l(\widetilde \rho_{|J})$ reste irrÈductible de normalisateur $N_\rho$ de sorte que
la longueur de $r_l(\tau)$ est Ègale ‡
$$m(\tau):=[N_\rho \cap N_{r_l(\chi)}:J].$$
\end{prop}

\rem deux sous-quotients irrÈductibles $\tau$ et $\tau'$ quelconques de
la rÈduction modulo $l$ d'une reprÈsentation irrÈductible entiËre de $D_{K,d}^\times$, sont
inertiellement Èquivalents, i.e. il existe un caractËre $\zeta:\Zm \longrightarrow  \overline \Qm_l^\times$
tel que $\tau' \simeq \tau \otimes \zeta \circ \val \circ \rn$.

Par construction il existe des entiers $f',d',e'$ de produit Ègal ‡ $d$ tels que
$$J/(1+\PC_{K,d}) \simeq \Fm^\times_{q^{f'd'}} \rtimes m \Zm/e'd' \Zm,$$
o˘ $m$ est un diviseur de $d'$ tel que $f'm=[D_{K,d}^\times:\varpi^\Zm \DC_{d,K}^\times J]$.
L'abÈlianisÈ de $J/(1+\PC_{K,d})$ s'identifie, via la norme, ‡  
$\Fm^\times_{q^{f'm}} \times m \Zm/e'd' \Zm$ et le nombre de reprÈsentations irrÈductibles
strictement congrues ‡ $\tau$ est la plus grande puissance de $l$ divisant le cardinal
de cet abÈlianisÈ.

\begin{nota}  \label{nota-D0}
Pour $\bar \tau$ une $ \overline \Fm_l$-reprÈsentation irrÈductible de 
$D_{K,d}^\times$, avec les notations de la proposition prÈcÈdente et $\tau$ un relËvement,
avec les notations prÈcÈdentes, on notera
$$m(\bar \tau)=[N_\chi \cap N_{r_l(\chi)}:J] \quad \hbox{ et } \quad g(\bar \tau):=\frac{d}{e'}=f'd'.$$
Soit aussi $s(\bar \tau)$ la plus grande puissance de $l$ divisant 
$\frac{d}{m(\bar \tau)g(\bar \tau)}$ et on note
$$g_{-1}(\bar \tau)=g(\bar \tau) \quad \hbox{ et } \quad \forall 0 \leq i \leq s(\bar \tau),~
g_i(\bar \tau)=m(\bar \tau)l^i g(\bar \tau).$$
\end{nota}

\begin{defi}
Pour $\bar \tau$ une $\overline \Fm_l$-reprÈsentation irrÈductible de $D_{K,d}^\times$, soit
$\CC_{\bar \tau} \subset \rep_{\Zm_l^{nr}}^\oo(D_{K,d}^\times)$
la sous-catÈgorie pleine formÈe des objets dont tous les 
$\Zm_l^{nr}\DC_{K,d}^\times$-sous-quotients irrÈductibles sont isomorphes ‡
un sous-quotient de $\bar \tau_{|\DC_{K,d}^\times}$. On note $\bar \tau^0$ un sous-quotient irrÈductible
quelconque de $\bar \tau_{|\DC_{K,d}^\times}$.
\end{defi}

\begin{prop} (cf. \cite{dat-torsion} \S B.2) \label{prop-scindage} \phantomsection \\
Soit $P_{\bar \tau^0}$ une enveloppe projective de $\bar \tau^0$ dans
$\rep^\oo_{\Zm_l^{nr}}(\DC_{K,d}^\times)$. Alors la sous-catÈgorie 
$\CC_{\bar \tau}$ est facteur direct dans $\rep_{\Zm_l^{nr}}^\oo(D_{K,d}^\times)$
pro-engendrÈe par l'induite $P_{\bar \tau}:=\ind_{\DC_{K,d}^\times}^{D_{K,d}^\times} (P_{\bar \tau^0})$.
\end{prop}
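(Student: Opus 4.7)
Le plan consiste à se ramener à une décomposition en blocs sur le sous-groupe compact ouvert $\DC_{K,d}^\times$, puis à contrôler l'action par conjugaison de $D_{K,d}^\times/\DC_{K,d}^\times \simeq \Zm$ (engendré par une uniformisante $\varpi_D$ de $D_{K,d}$). Je commencerais par rappeler que $\DC_{K,d}^\times$ est une extension du groupe cyclique $\Fm_{q^d}^\times$ (d'ordre premier à $l$) par le pro-$p$ groupe $1+\PC_{K,d}$, ce qui permet d'invoquer la décomposition de type Bernstein pour les représentations lisses de ce groupe profini à coefficients dans $\Zm_l^{nr}$:
$$\rep^\oo_{\Zm_l^{nr}}(\DC_{K,d}^\times) \simeq \prod_{\bar \sigma} \rep^\oo_{\Zm_l^{nr}}(\DC_{K,d}^\times)_{\bar \sigma},$$
où $\bar \sigma$ parcourt les classes d'isomorphisme de $ \overline \Fm_l$-représentations irréductibles de $\DC_{K,d}^\times$ et où chaque facteur est pro-engendré par l'enveloppe projective $P_{\bar \sigma}$. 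Celle-ci se construit explicitement comme un module projectif de type fini sur l'algèbre de Hecke d'un niveau approprié, et sa construction repose essentiellement sur le fait que $l$ est inversible dans les quotients finis $\DC_{K,d}^\times/(1+\PC_{K,d}^n)$ modulo leur pro-$p$ partie.

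Je passerais ensuite à $D_{K,d}^\times$ via la théorie de Clifford-Mackey. La conjugaison par $\varpi_D$ agit sur l'ensemble des classes d'irréductibles $\bar\sigma$ de $\DC_{K,d}^\times$ et préserve la décomposition en blocs ci-dessus. Les constituants irréductibles de $\bar \tau_{|\DC_{K,d}^\times}$ forment alors exactement l'orbite de $\bar \tau^0$ sous $\varpi_D^\Zm$: en effet, comme $\bar \tau$ est $\DC_{K,d}^\times$-semi-simple et $\varpi_D$ permute transitivement ses composantes isotypiques, la catégorie $\CC_{\bar \tau}$ correspond, via la restriction, au produit des blocs indexés par cette orbite.

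Le foncteur d'induction $\ind_{\DC_{K,d}^\times}^{D_{K,d}^\times}$ étant exact (car $\DC_{K,d}^\times$ est ouvert) et adjoint à gauche de la restriction, il envoie les projectifs sur des projectifs; en particulier $P_{\bar \tau}$ est projectif. La réciprocité de Frobenius fournit
$$\hom_{D_{K,d}^\times}(P_{\bar \tau}, V) = \hom_{\DC_{K,d}^\times}(P_{\bar \tau^0}, V_{|\DC_{K,d}^\times}),$$
qui est non nul pour tout $V \in \CC_{\bar \tau}$ non nul puisqu'alors $V_{|\DC_{K,d}^\times}$ contient un sous-quotient isomorphe à $\bar \tau^0$ et $P_{\bar \tau^0}$ pro-engendre le bloc correspondant. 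Ceci montre que $P_{\bar \tau}$ pro-engendre bien $\CC_{\bar \tau}$.

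L'obstacle principal sera de vérifier que $\CC_{\bar \tau}$ est effectivement un \emph{facteur direct} de $\rep^\oo_{\Zm_l^{nr}}(D_{K,d}^\times)$, et non seulement une sous-catégorie pleine stable par sous-quotient. Pour cela, le plan est de construire l'idempotent central correspondant à partir des idempotents des blocs de $\DC_{K,d}^\times$ dans l'orbite, en vérifiant que leur somme, a priori convergente dans le centre de Bernstein de $\DC_{K,d}^\times$, est invariante sous conjugaison par $\varpi_D$ et donc provient d'un idempotent du centre de Bernstein de $D_{K,d}^\times$. Alternativement, on peut raisonner via les $\Ext$: il s'agit de montrer que pour tout $V \in \CC_{\bar \tau}$ et tout $V'$ dans un bloc associé à une orbite disjointe, on a $\Ext^\bullet_{D_{K,d}^\times}(V,V')=0$, ce qui, par restriction et utilisation de la suite spectrale de Hochschild-Serre pour $\DC_{K,d}^\times \triangleleft D_{K,d}^\times$, se ramène à la séparation des blocs au niveau de $\DC_{K,d}^\times$. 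C'est ce point-là qui constitue le c\oe ur technique et pour lequel la référence \cite{dat-torsion} \S B.2 fournit les outils nécessaires.
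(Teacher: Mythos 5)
Le papier ne démontre pas cette proposition : il la cite directement de \cite{dat-torsion} \S B.2. Votre texte est donc à évaluer comme une tentative de reconstruire l'argument de Dat, et non à comparer à une preuve interne au papier.

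Dans l'ensemble, la structure de votre argument est la bonne : (a) décomposition en blocs de $\rep^\oo_{\Zm_l^{nr}}(\DC_{K,d}^\times)$, (b) passage à $D_{K,d}^\times$ par la théorie de Clifford pour le quotient $D_{K,d}^\times/\DC_{K,d}^\times \simeq \Zm$ engendré par $\varpi_D$, (c) construction de l'idempotent central $\varpi_D$-invariant, (d) pro-engendrement par réciprocité de Frobenius et exactitude de l'induction compacte depuis un ouvert. Le point (d) est correctement traité : un sous-quotient $\bar\tau^0$ de $V_{|\DC_{K,d}^\times}$ se détecte bien par $\hom_{\DC_{K,d}^\times}(P_{\bar\tau^0},-)$, et l'induction d'un ouvert préserve les projectifs. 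Le détour par la suite spectrale de Hochschild-Serre pour les $\Ext$ que vous proposez en alternative est correct mais superflu une fois la décomposition en blocs établie sur $\DC_{K,d}^\times$.

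Le point qui demeure insuffisamment justifié est précisément celui dont tout dépend : l'affirmation que les blocs de $\rep^\oo_{\Zm_l^{nr}}(\DC_{K,d}^\times)$ sont en bijection avec les $\overline\Fm_l$-irréductibles $\bar\sigma$ de $\DC_{K,d}^\times$, c'est-à-dire qu'\emph{un seul irréductible apparaît dans chaque bloc}. Votre justification, «~$l$ est inversible dans les quotients finis $\DC_{K,d}^\times/(1+\PC_{K,d}^n)$ modulo leur pro-$p$ partie~», est inexacte telle quelle : ce quotient est $\Fm_{q^d}^\times$, et rien n'empêche $l$ de diviser $q^d-1$. Si le bloc de $\bar\tau^0$ contenait un irréductible $\bar\sigma$ hors de l'orbite de $\bar\tau^0$ sous $\varpi_D^\Zm$, l'idempotent central que vous construisez découperait une sous-catégorie strictement plus grande que $\CC_{\bar\tau}$, et la proposition serait fausse. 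L'argument correct passe par la théorie de Clifford et la réduction de Fong-Reynolds : le sous-groupe $1+\PC_{K,d}$ est pro-$p$ donc d'ordre premier à $l$, et $\DC_{K,d}^\times/(1+\PC_{K,d})\simeq\Fm_{q^d}^\times$ est cyclique ; tout bloc de $\DC_{K,d}^\times$ est alors Morita-équivalent à un bloc d'une algèbre de groupe, éventuellement tordue par un $2$-cocycle, du stabilisateur dans $\Fm_{q^d}^\times$ d'un irréductible de $1+\PC_{K,d}$. Ce stabilisateur est cyclique, son multiplicateur de Schur sur $\overline\Fm_l^\times$ (divisible) est trivial, donc l'algèbre tordue est en fait une algèbre de groupe ordinaire d'un groupe cyclique, dont chaque bloc contient un unique simple. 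On relève ensuite les idempotents de $\overline\Fm_l$ à $\Zm_l^{nr}$. Sans cet argument explicite, votre «~c\oe ur technique~» reste délégué à la référence que vous deviez reconstituer.
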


Ainsi toute $\Zm_l^{nr}$-reprÈsentation $V_{\Zm_l^{nr}}$ de $D_{K,d}^\times$
se dÈcompose en une somme directe
$V_{\Zm_l^{nr}} \simeq \bigoplus_{\bar \tau} V_{\Zm_{l,\bar \tau}^{nr}}$
o˘ $\bar \tau$ dÈcrit les classes d'Èquivalence inertielles des $\overline \Fm_l$-reprÈsentations
irrÈductibles de $D_{K,d}^\times$ et o˘ $V_{\Zm_{l,\bar \tau}^{nr}}$
est un objet de $\CC_{\bar \tau}$, i.e. tous ses sous-quotients irrÈductibles
sont isomorphes ‡ un sous-quotient de $\bar \tau_{|\DC_{K,d}^\times}$.

\begin{prop} (cf. \cite{dat-jl} proposition 2.3.2) 
Soient $\bar \tau$ une $\overline \Fm_l$-reprÈsentation irrÈductible de $D_{K,d}^\times$
et $\tau' \in \CC_{\bar \tau}$ une $\overline \Qm_{l}$-reprÈsentation irrÈductible entiËre.
Il existe alors $-1 \leq i \leq s(\bar \tau)$ et une reprÈsentation irrÈductible cuspidale $\pi_i$
de $GL_{g_i(\bar \tau)}(K)$ telle que $\tau' \simeq \pi_i[\frac{d}{g_i(\bar \tau)}]_D$.
\end{prop}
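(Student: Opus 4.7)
The plan is to reduce the statement to the classification of irreducible representations of $D_{K,d}^\times$ via Jacquet-Langlands together with a level analysis of the block $\CC_{\bar \tau}$. First I would invoke the remark after Notation \ref{nota-JL} to write $\tau' \simeq \pi[s]_D$ for a unique divisor $s$ of $d$ and a unique (up to isomorphism) irreducible cuspidal $\pi$ of $GL_{d/s}(K)$. The goal is then to identify $d/s$ with some $g_i(\bar \tau)$, $-1 \leq i \leq s(\bar \tau)$, and to see that $\pi$ itself is constrained only by $\bar \tau$, not further.

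Next I would exploit the membership $\tau' \in \CC_{\bar \tau}$: by definition every $\Zm_l^{nr}\DC_{K,d}^\times$-irreducible subquotient of the reduction $r_l(\tau')$ appears in $\bar \tau_{|\DC_{K,d}^\times}$. Fixing an irreducible constituent $\bar \zeta$ of $\bar \tau_{|1+\PC_{K,d}}$ as in Proposition \ref{prop-repD}, this forces $\bar \zeta$ to appear in $r_l(\tau')_{|1+\PC_{K,d}}$. Using the explicit induced description $\bar \tau \simeq \ind_J^{D_{K,d}^\times/\varpi^\Zm}(\tilde \zeta_{|J} \otimes \bar \chi)$ together with the level/type structure of $\pi[s]_D$ (which via Jacquet-Langlands is controlled by the Bushnell-Kutzko maximal simple type of $\pi$), this compatibility of central characters of $\bar \zeta$ translates into a divisibility condition: the conductor exponent of $\pi$ must match that dictated by $\bar \tau$, which in turn forces $d/s$ to be of the form $m(\bar \tau)\,l^i\,g(\bar \tau)$ for some integer $i \geq -1$ (with the convention that $i=-1$ gives $g(\bar \tau)$ itself, as in Notation \ref{nota-D0}).

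Finally, the upper bound $i \leq s(\bar \tau)$ is immediate from the requirement $d/s \mid d$: writing $d = m(\bar \tau) g(\bar \tau) \cdot (d/(m(\bar \tau)g(\bar \tau)))$, the maximal power of $l$ that can appear in $d/s$ without exceeding $d$ is exactly the largest power of $l$ dividing $d/(m(\bar \tau)g(\bar \tau))$, which is by definition $l^{s(\bar \tau)}$. Conversely, for each such $i$ any cuspidal $\pi_i$ of $GL_{g_i(\bar \tau)}(K)$ with appropriate central character (tied to $\bar \chi$) yields a $\pi_i[d/g_i(\bar \tau)]_D$ whose reduction lies in $\CC_{\bar \tau}$, by reversing the computation.

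The main obstacle is the explicit comparison, on the $(1+\PC_{K,d})$-level, between the restriction of $\pi[s]_D$ and the irreducible constituent $\bar \zeta$ of $\bar \tau$; this is the heart of Dat's Proposition 2.3.2 in \cite{dat-JL}, relying on the fact that Jacquet-Langlands preserves level structures in a precise way and on the classification of cuspidal types for $GL_n(K)$. Once this correspondence between levels and the invariants $(m(\bar \tau), g(\bar \tau))$ is established, the numerical conclusion above is a direct bookkeeping of $l$-adic valuations of $d/(m(\bar \tau)g(\bar \tau))$.
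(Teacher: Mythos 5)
The paper does not prove this proposition: it is cited verbatim to Dat's Proposition 2.3.2 in \cite{dat-JL}, so there is no internal argument to compare your sketch against. Your opening move (uniqueness of the Jacquet-Langlands decomposition $\tau' \simeq \pi[s]_D$, from the remark after Notation \ref{nota-JL}) and your closing divisibility count giving $i \leq s(\bar\tau)$ are both sound bookkeeping.

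The gap is the central step. You assert that membership $\tau' \in \CC_{\bar\tau}$ \og translates into a divisibility condition: the conductor exponent of $\pi$ must match that dictated by $\bar\tau$, which in turn forces $d/s$ to be of the form $m(\bar\tau)\,l^i\,g(\bar\tau)$ \fg. A conductor or level constraint cannot produce that set of degrees, and in particular cannot explain why exactly an $l$-power appears. The reason the degree must be $g(\bar\tau)$ or $m(\bar\tau)\,l^i\,g(\bar\tau)$ is Vign\'eras's classification of $\overline\Fm_l$-cuspidal representations: if $\bar\sigma$ is the supercuspidal of $GL_{g(\bar\tau)}$ over $\overline\Fm_l$ underlying the block, a $\overline\Qm_l$-cuspidal of $GL_{n g(\bar\tau)}(K)$ can reduce into this block only for $n = 1$ or $n = m(\bar\tau) l^i$, because the cuspidal non-supercuspidal $\overline\Fm_l$-representations whose supercuspidal support is an inertial segment of $\bar\sigma$ occur exactly for those segment lengths (this is precisely what the remark following the proposition records: \og son support supercuspidal est un segment de Zelevinsky-Vign\'eras de longueur $m(\bar\tau)l^i$ \fg). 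That $l$-power is a statement about decomposition numbers and blocks of $\overline\Fm_l$-representations, a genuinely modular phenomenon; it is not visible at the level of conductors or Bushnell-Kutzko levels, which scale with the group and carry no $l$-adic content. As written, your sketch routes around the one input that makes the proposition nontrivial, so it does not constitute a proof.
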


\rem avec les notations de la proposition prÈcÈdente, la rÈduction modulo $l$ de $\pi_i$
est supercuspidale si et seulement si $i=-1$ et sinon son support supercuspidal
est un segment de Zelevinsky-VignÈras de longueur $m(\bar \tau)l^i$ associÈ ‡ une supercuspidale
$\varrho$ de $GL_{g_{-1}(\bar \tau)}(K)$. En outre $\bar \tau$ est uniquement dÈterminÈe par
$\varrho$, on dira qu'elle est de type $\varrho$.

\begin{nota} \label{nota-type} \phantomsection
…tant donnÈs une reprÈsentation irrÈductible supercuspidale $\varrho$ de $GL_g(K)$
et un multiple $h=tg$ de $g$,on notera $\varrho[t]_D \in \RC_{\overline \Fm_l}(h)$ la classe
de la reprÈsentation de $D^\times_{K,h}$ de type $\varrho$. On note
$$\RC_{\overline \Fm_l}(\varrho):= \bigl \{ \varrho[t]_D: ~1 \leq t \leq \frac{d}{g} \bigr \}.$$
\end{nota}

\noindent \textit{Exemple:} soit $\pi_{-1}$ une $ \overline \Qm_l$-reprÈsentation irrÈductible cuspidale entiËre 
de $GL_g(K)$ dont la rÈduction modulo $l$ est supercuspidale. Soient $t \geq 1$ et
$\bar \tau$ un constituant irrÈductible de la rÈduction modulo $l$ de $\pi_{-1}[t]_D$: on a
alors $g_{-1}(\bar \tau)=g$.
Soit alors $\pi_i$ une reprÈsentation irrÈductible cuspidale de $GL_{g_i(\bar \tau)}(K)$ dont la rÈduction
modulo $l$ a pour support supercuspidal un segment de Zelevinsky $[r_l(\pi_{-1} \{ \frac{1-s}{2} \}),
r_l(\pi_{-1} \{ \frac{s-1}{2} \} )]$ avec $g_i(\bar \tau)=s g$. Pour tout $t_i$ tel que $t_i g_i(\bar \tau) \geq
tg$, la reprÈsentation $\pi_i[t_i]_D$ (resp. un sous-quotient irrÈductible de la rÈduction modulo $l$
de $\pi_i[t_i]_D$) est de $\bar \tau$-type $i$.

\begin{nota} \label{nota-taui}
Avec les notations prÈcÈdentes, on notera 
$\scusp_i(\bar \tau)$ ou $\scusp_i(\varrho)$ l'ensemble des classes d'Èquivalences 
des reprÈsentations $\pi_i$.
\end{nota}

\begin{prop} \label{prop-red-tau}
Soit $\pi_{v,i} \in \scusp_i(\bar \tau)$. Pour $t \geq 1$, on a
alors l'ÈgalitÈ suivante dans le groupe de Grothendieck
\addtocounter{smfthm}{1}
\begin{equation} \label{eq-redmodl}
r_l(\pi_{v,u}[t]_D)=l^u \sum_{i=0}^{m(\varrho)-1} r_l(\pi_{v,-1}[tm(\varrho)l^u] \nu^i.
\end{equation}
\end{prop}

\rem dans le cas o˘ $\epsilon(\varrho)=1$, la formule
(\ref{eq-redmodl}) s'Ècrit $r_l( \pi_{v,u}[t]_D)=l^{u+1} r_l(\pi_{v,-1}[t]_D)$.

\section{Rappels des rÈsultats faisceautiques sur $\overline \Qm_l$ d'aprËs \cite{boyer-invent2}}
\label{app-B}

Dans ce paragraphe nous rappelons les rÈsultats de \cite{boyer-invent2} que nous utilisons dans ce texte:
les notations sont celles des paragraphes prÈcÈdents. PrÈcisons de nouveau que tous les rÈsultats de 
loc. cit. sont sur $\overline \Qm_l$ ce qui permet de l'enlever des notations afin d'allÈger les Ècritures.

Rappelons que pour $\sigma_v$ (resp. $\pi_v$) une
reprÈsentation de $W_v$ (resp. de $GL_d(F_v)$), l'action d'un ÈlÈment $g$ sur 
$$\sigma_v(n) \qquad (\hbox{resp. } \pi_v\{ n \})$$ 
est donnÈe par $\sigma_v(g) |\art^{-1}_{F_v}(g)|^n$ (resp. $\pi_v(g) |\det g|^n$)
o˘ $|-|$ est la valeur absolue sur $F_v$ et $\art_{F_v}^{-1}:W_v \longto F_v^\times$ le morphisme
de la thÈorie du corps de classe qui envoie les frobenius gÈomÈtriques $\fr^{-1}$ de $W_v$ de $F_v$ sur
les uniformisantes, i.e. $v(\art_{F_v}^{-1} (\fr))=-1$.

\subsection{sur les systËmes locaux d'Harris-Taylor}
\label{app-B1}

Soient $\pi_v$ une reprÈsentation irrÈductible cuspidale de $GL_g(F_v)$ et $1 \leq t \leq \frac{d}{g}$.
La reprÈsentation $\pi_v[t]_D$ de $D_{v,tg}^\times$, cf. la notation \ref{nota-JL}, fournit 
un systËme local sur $X^{=tg}_{\IC,\bar s,\overline{1_h}}$
$$\LC(\pi_v[t]_D)_{\overline{1_h}}=\bigoplus_{i=1}^{e_{\pi_v}} 
\LC_{\overline \Qm_l}(\rho_{v,i})_{\overline{1_h}}$$
o˘ $(\pi_v[t]_D)_{|\DC_{v,h}^\times}=\bigoplus_{i=1}^{e_{\pi_v}} \rho_{v,i}$ avec $\rho_{v,i}$ irrÈductible
et muni d'une action de $P_{tg,d-tg}(F_v)$ via son quotient $GL_{d-tg} \times \Zm$.

\rem ce systËme local est notÈ $\FC(t,\pi_v)_1$ dans loc. cit. 1.4.7.

\begin{nota} Pour tout strate pure $X^{=tg}_{\IC,\bar s,a}$, on note 
$\LC(\pi_v[t]_D)_{a}$ l'image de $\LC(\pi_v[t]_D)_{\overline{1_h}}$ par un ÈlÈment quelconque
de $a \in GL_d(F_v)/P_{tg,d-tg}(F_v)$. 
\end{nota}

\noindent \textit{Exemple}: pour $h \leq tg$ on a ainsi $\LC(\pi_v[t]_D)_{\overline{1_h}}$
est la somme directe des $\LC(\pi_v[t]_D)_a$ sur les strates pures $X^{=tg}_{\IC,\bar s,a}$
contenues dans $X^{\geq h}_{\IC,\bar s,\overline{1_h}}$.

\begin{defi} Les systËmes locaux d'Harris-Taylor sont les
$$\widetilde{HT}_a(\pi_v,\Pi_t):=\LC(\pi_v[t]_D)_a \otimes \Pi_t \otimes \Xi^{\frac{tg-d}{2}}$$
et leurs versions induites
$$\widetilde{HT}(\pi_v,\Pi_t):=\Bigl ( \LC(\pi_v[t]_D)_{\overline{1_h}} 
\otimes \Pi_t \otimes \Xi^{\frac{tg-d}{2}} \Bigr) \times_{P_{tg,d-tg}(F_v)} GL_d(F_v),$$
o˘ l'action du radical unipotent de $P_{tg,d-tg}(F_v)$ est triviale, et celle de
$$(g^{\oo,v},\left ( \begin{array}{cc} g_v^c & *†\\ 0 & g_v^{et} \end{array} \right ),\sigma_v) 
\in G(\Am^{\oo,v}) \times P_{tg,d-tg}(F_v) \times W_v$$ 
est donnÈe
\begin{itemize}
\item par celle de  $g_v^c$ sur $\Pi_t$ et  $\deg(\sigma_v) \in \Zm$ sur $ \Xi^{\frac{tg-d}{2}}$ ainsi que 

\item celle de $(g^{\oo,v},g_v^{et},\val(\det g_v^c)-\deg \sigma_v)
\in G(\Am^{\oo,v}) \times GL_{d-tg}(F_v) \times \Zm$ sur $\LC_{\overline \Qm_l}
(\pi_v[t]_D)_{\overline{1_h}} \otimes \Xi^{\frac{tg-d}{2}}$,
o˘ $\Xi:\frac{1}{2} \Zm \longrightarrow 
\overline \Zm_l^\times$ est dÈfini par $\Xi(\frac{1}{2})=q^{1/2}$.
\end{itemize}
\end{defi}

On dit de l'action de $GL_{tg}(F_v)$ qu'elle est \emph{infinitÈsimale}, 
cf. aussi la remarque prÈcÈdant \ref{nota-1h}. 

\begin{nota} \label{nota-ind}
Pour 
\begin{itemize}
\item $z_\IC$ un point gÈomÈtrique
de $X^{=tg}_{\IC,\bar s}$ et $i_z:\overline{ \{ z \} } \hookrightarrow X_{\IC,\bar s}$,

\item ainsi que $HT(\pi_v,\Pi_t)$ un systËme local d'Harris-Taylor sur
 $X^{=tg}_{\IC,\bar s}$,
\end{itemize}
on note
$$\ind z^* HT(\pi_v,\Pi_t):= \ind_{GL_{tg}(\OC_v)}^{GL_{tg}(F_v)}
i_z^* HT(\pi_v,\Pi_t).$$
\end{nota}


\begin{notas} On pose
$$HT(\pi_v,\Pi_t):=\widetilde{HT}(\pi_v,\Pi_t)[d-tg],$$
et le faisceau pervers d'Harris-Taylor associÈ est
$$P(t,\pi_v):=j^{=tg}_{!*} HT(\pi_v,\st_t(\pi_v)) \otimes \Lm(\pi_v),$$
ou sa version non induite
$$P(t,\pi_v)_a=i^h_{a,*} i^{h,*}_a P(t,\pi_v)=j^{=tg}_{a,!*} HT_a(\pi_v,\st_t(\pi_v)) \otimes \Lm(\pi_v),$$
o˘ $\LF^\vee$ dÈsigne la correspondance locale de Langlands.
\end{notas}

\rem on rappelle que $\pi'_v$ est inertiellement Èquivalente ‡ $\pi_v$ si et seulement
s'il existe un caractËre $\zeta: \Zm \longrightarrow  \overline \Qm_l^\times$ tel que 
$\pi'_v \simeq \pi_v \otimes (\zeta \circ \val \circ \det)$.
Les faisceaux pervers $P(t,\pi_v)$ ne dÈpendent que de la classe d'Èquivalence inertielle de $\pi_v$
et sont de la forme 
$$P(t,\pi_v)=e_{\pi_v} \PC(t,\pi_v)$$ 
o˘ $\PC(t,\pi_v)$ est un faisceau pervers irrÈductible.

\begin{nota} \label{nota-divers}
On note $\Fil^{-tg}_*(\pi_v,\Pi_t)$ le noyau du morphisme d'adjonction
$$j^{=tg}_! HT(\pi_v,\Pi_t) \twoheadrightarrow j^{=tg}_{!*} HT(\pi_v,\Pi_t),$$
qui est ‡ support dans $X^{\geq tg+1}_{\IC,\bar s}$.
\end{nota}

\begin{prop} (cf. \cite{boyer-torsion} 3.3.5)
Le faisceau pervers $\Fil^{-tg}_*(\pi_v,\Pi_t)$ est en fait ‡ support dans $X^{\geq t(g+1)}_{\IC,\bar s}$
et le morphisme d'adjonction 
$$j^{=(t+1)g}_! j^{=(t+1)g,*} \Fil^{-tg}_*(\pi_v,\Pi_t) \longrightarrow \Fil^{-tg}_*(\pi_v,\Pi_t)$$
est surjectif.
\end{prop}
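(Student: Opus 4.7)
\emph{Strat\'egie.} La proposition est un \'enonc\'e sur $\overline \Qm_l$. On cherche \`a montrer que $\Fil^{-tg}_*(\pi_v,\Pi_t)$, \`a support dans $X^{\geq (t+1)g}_{\IC,\bar s}$, est engendr\'e par sa restriction \`a la strate ouverte $X^{=(t+1)g}_{\IC,\bar s}$ de ce support. Le plan consiste \`a construire d'abord, par voie ind\'ependante, un \'epimorphisme explicite
$$\phi: j^{=(t+1)g}_! \widetilde{HT}(\pi_v,\Pi_t \overrightarrow{\times} \pi_v)[d-(t+1)g] \tarrow \Fil^{-tg}_*(\pi_v,\Pi_t),$$
puis d'en d\'eduire formellement la surjectivit\'e du morphisme d'adjonction de l'\'enonc\'e.

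\emph{Construction de $\phi$.} Il d\'ecoule des r\'esultats de \cite{boyer-invent2} sur la structure des extensions interm\'ediaires d'Harris-Taylor que le faisceau pervers $j^{=tg}_{!*} HT(\pi_v,\Pi_t)$ admet une r\'esolution \`a gauche de type Zelevinsky
\begin{multline*}
\cdots \longrightarrow j^{=(t+2)g}_! \widetilde{HT}(\pi_v,\Pi_t \overrightarrow{\times} \st_2(\pi_v))[d-(t+2)g] \\
\longrightarrow j^{=(t+1)g}_! \widetilde{HT}(\pi_v,\Pi_t \overrightarrow{\times} \pi_v)[d-(t+1)g] \\
\longrightarrow j^{=tg}_! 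HT(\pi_v,\Pi_t) \longrightarrow j^{=tg}_{!*} HT(\pi_v,\Pi_t) \longrightarrow 0.
\end{multline*}
Puisque $\Fil^{-tg}_*(\pi_v,\Pi_t)$ s'identifie par d\'efinition au noyau de la derni\`ere fl\`eche non triviale de cette r\'esolution, il est automatiquement quotient de $j^{=(t+1)g}_! \widetilde{HT}(\pi_v,\Pi_t \overrightarrow{\times} \pi_v)[d-(t+1)g]$, d'o\`u $\phi$.

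\emph{Passage \`a l'adjonction.} Posons $L := \widetilde{HT}(\pi_v,\Pi_t \overrightarrow{\times} \pi_v)[d-(t+1)g]$ et $P := \Fil^{-tg}_*(\pi_v,\Pi_t)$. En appliquant le foncteur $j^{=(t+1)g,*}$, $t$-exact \`a droite sur les faisceaux pervers et pr\'eservant les \'epimorphismes apr\`es passage \`a $\lexp p \hi^0$, \`a la surjection $\phi$, on obtient un \'epimorphisme $L \tarrow \lexp p \hi^0 j^{=(t+1)g,*} P$. Par adjonction, $\phi$ se factorise alors sous la forme
$$j^{=(t+1)g}_! L \longrightarrow j^{=(t+1)g}_! j^{=(t+1)g,*} P \longrightarrow P,$$
la seconde fl\`eche \'etant pr\'ecis\'ement le morphisme d'adjonction \`a \'etudier. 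La compos\'ee \'etant l'\'epimorphisme $\phi$, cette seconde fl\`eche est n\'ecessairement surjective.

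\emph{Obstacle principal.} Le point d\'elicat est l'\'etablissement de la r\'esolution de type Zelevinsky pour $j^{=tg}_{!*} HT(\pi_v,\Pi_t)$. Elle se construit par it\'eration en combinant la description explicite des faisceaux pervers $\PC(t+k,\pi_v)$ et de leurs restrictions aux strates de Newton, avec l'analyse des constituants irr\'eductibles des induites paraboliques $\Pi_t \overrightarrow{\times} \st_k(\pi_v)$ vues comme repr\'esentations de $GL_{(t+k)g}(F_v)$.
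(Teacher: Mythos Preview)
The paper does not prove this proposition; it is stated in the appendix as a recall from \cite{boyer-torsion} 3.3.5, and the resolution (\ref{eq-resolution0}) is explicitly presented as a \emph{consequence} obtained by iterating it.

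This is precisely where your argument risks being circular. Your construction of $\phi$ rests on the existence of a Zelevinsky-type resolution of $j^{=tg}_{!*} HT(\pi_v,\Pi_t)$, which you attribute to \cite{boyer-invent2}. But in the logical architecture adopted here, that resolution is \emph{deduced from} the proposition you are trying to prove, not the other way around. Your ``Obstacle principal'' paragraph acknowledges that establishing the resolution is the delicate point, yet you do not give an independent argument for even its first step --- which is exactly what is needed. Knowing the irreducible constituents of $j^{=tg}_! HT(\pi_v,\Pi_t)$ in the Grothendieck group (as in \cite{boyer-invent2} 4.3.1 and 5.4.1) does not by itself produce the map $\phi$, nor the surjectivity you need; one must control the actual extension structure, and that is the content of the proposition.

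Two minor remarks. First, your formal ``Passage \`a l'adjonction'' is correct: once a surjection from some $j^{=(t+1)g}_! L$ onto $P$ is available, the counit factorization shows the adjunction map is surjective. Second, the higher terms of your resolution carry $\st_k(\pi_v)$ where the paper's resolution (\ref{eq-resolution0}) has $\speh_k(\pi_v)$; this does not affect the first step, but it signals that the resolution you have in mind is not quite the one at hand.
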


En itÈrant cette proposition, on obtient, pour $s=\lfloor \frac{d}{g} \rfloor$, la rÈsolution
\addtocounter{smfthm}{1}
\begin{multline} \label{eq-resolution0}
0 \rightarrow j_!^{=s g} HT(\pi_v,\Pi_t \overrightarrow{\times} \speh_{s-t}(\pi_v)) \otimes \Xi^{\frac{s-t}{2}}
\longrightarrow \cdots \longrightarrow j^{=(t+2)g}_! HT(\pi_v,\Pi_t†\overrightarrow{\times} \speh_2(\pi_v))
\otimes \Xi^1 \\
\longrightarrow j_!^{= (t+1)g} HT(\pi_v,\Pi_t \overrightarrow{\times} \pi_v) \otimes \Xi^{\frac{1}{2}}
\longrightarrow \Fil^{-tg}_*(\pi_v,\Pi_t) \rightarrow 0.
\end{multline}
o˘ pour $\pi_1$ et $\pi_2$ des reprÈsentations de respectivement $GL_{t_1g}(F_v)$ et $GL_{t_2g}(F_v)$,
$\pi_1 \overrightarrow{\times} \pi_2$ dÈsigne l'induite normalisÈe 
$\pi_1 \{ -\frac{t_2}{2} \} \times \pi_2 \{ \frac{t_1}{2} \}$.

\rem de cette rÈsolution on en dÈduit:
\begin{itemize}
\item le calcul des faisceaux de cohomologie des faisceaux pervers
d'Harris-Taylor du thÈorËme 2.2.5 de \cite{boyer-invent2}. PrÈcisÈment, pour $g>1$, le $i$-Ëme faisceau
de cohomologie de $\lexp p j^{=tg}_{!*} HT(\pi_v,\Pi_t)$ est nul sauf si $i$ est de la forme
$-d+tg+\delta(g-1)$ auquel cas il est isomorphe ‡ $HT(\pi_v,\Pi_t \{ \frac{-\delta}{2} \} \times \speh_\delta
(\pi_v \{ \frac{t}{2} \} ) )[tg-d] \otimes \Xi^{\frac{\delta}{2}}$.

\item la description des constituants irrÈductibles de $j^{=tg}_! HT(\pi_v,\Pi_t)$ de la proposition 4.3.1, 
complÈtÈe par le corollaire 5.4.1, de \cite{boyer-invent2}, cf. aussi le corollaire 3.3.8 de \cite{boyer-torsion},
qui dans le groupe de Grothendieck correspondant, s'Ècrit
\begin{equation} \label{eq-extzero}
\bigl [ j^{=tg}_! HT(\pi_v,\Pi_t) \bigr ]=\sum_{r=0}^{\lfloor \frac{d}{g} \rfloor -t}
\bigl [ j^{=(t+r)g}_{!*} HT(\pi_v,\Pi_t \overrightarrow{\times} \st_r(\pi_v)) \otimes \Xi^{\frac{r}{2}} \bigr ].
\end{equation}

\end{itemize}

\subsection{sur le faisceau pervers des cycles Èvanescents}
\label{para-rap-psi}

La premiËre Ètape de \cite{boyer-invent2} consiste, cf. loc. cit. dÈfinition 2.2.2, ‡ dÈcouper $\Psi_{\IC}$
selon les classes d'Èquivalence inertielles $\cusp_v(g)$ des reprÈsentations irrÈductibles cuspidales 
de $GL_g(F_v)$ pour $g$ variant de $1$ ‡ $d$:
\addtocounter{smfthm}{1}
\begin{equation} \label{eq-decoupageQl}
\Psi_{\IC} \simeq \bigoplus_{g=1}^d \bigoplus_{\pi_v \in \cusp_v(g)} \Psi_{\IC,\pi_v}.
\end{equation}
Les rÈsultats du \S 7 de \cite{boyer-invent2} sur $\Psi_{\IC,\pi_v}$ sont rÈsumÈs par la proposition 3.4.3
de \cite{boyer-torsion} que nous reproduisons ci-dessous.

\begin{prop} \phantomsection \label{prop-psi-fil} (cf. \cite{boyer-torsion} 3.4.3)
Soit 
$$0=\Fil^0_!(\Psi_{\IC,\pi_v}) \subset \Fil^1_!(\Psi_{\IC,\pi_v}) \subset \cdots \subset
\Fil^d_!(\Psi_{\IC,\pi_v})=\Psi_{\IC,\pi_v}$$
la filtration de stratification de $\Psi_{\IC,\pi_v}$ de la proposition \ref{prop-ss-filtration}. 
Pour tout $r$ non divisible par $g$, le graduÈ $\gr^r_!(\Psi_{\IC,\pi_v})$ est nul et pour
$r=tg$ avec $1 \leq t \leq s$, il est ‡ support dans $X^{\geq tg}_{\IC,\bar s}$ avec\footnote{‡ un facteur 
$e_{\pi_v}$ prËs, $j^{\geq tg,*} \gr^{tg}_!(\Psi_{\IC,\pi_v})$ est donc isomorphe ‡
$\widetilde{HT}(\pi_v,\st_t(\pi_v)) \otimes \Lm(\pi_v)(\frac{1-t}{2})$}
$$j^{\geq tg,*} \gr^{tg}_!(\Psi_{\IC,\pi_v}) \simeq j^{\geq tg,*} \PC(t,\pi_v) (\frac{1-t}{2}).$$ 
La surjection
$j^{= tg}_! j^{\geq tg,*} \gr^{tg}_!(\Psi_{\IC,\pi_v}) \twoheadrightarrow \gr^{tg}_!(\Psi_{\IC,\pi_v}),$
o˘ dans le groupe de Grothendieck
$$\Bigl [  \gr^{tg}_!(\Psi_{\IC,\pi_v}) \Bigr ]=\sum_{i=t}^s \Bigl [ \PC(i,\pi_v)(\frac{1+i-2t}{2}) \Bigr ].$$
\end{prop}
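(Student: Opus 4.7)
The plan is to exploit the $\pi_v$-isotypic decomposition of $\Psi_\IC$ from \cite{boyer-invent2} together with the known local description of nearby cycles on Kottwitz-Harris-Taylor varieties, and to compare the stratification filtration with the monodromy-weight filtration whose graded pieces are the Harris-Taylor perverse sheaves $\PC(t,\pi_v)$ with explicit twists. All computations will be carried out on $\overline \Qm_l$, where both filtrations are available.

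First, I would establish the vanishing $j^{=r,*}\Psi_{\IC,\pi_v}=0$ whenever $g\nmid r$. By the Berkovich comparison and the Serre-Tate analogue, the fiber of $\Psi_{\IC,\pi_v}$ at a geometric point of $X^{=r}_{\IC,\bar s}$ is computed from the cohomology of the Lubin-Tate tower at height $r$, which by the Lubin-Tate--Drinfeld results only involves irreducible smooth representations of $GL_r(F_v)$ whose cuspidal support is a Zelevinsky segment relative to $\pi_v$; the degree of any such representation is a multiple of $g$, so the $\pi_v$-isotypic part is zero when $g\nmid r$. From this vanishing one deduces $\Fil^{tg}_!(\Psi_{\IC,\pi_v})=\Fil^{tg+k}_!(\Psi_{\IC,\pi_v})$ for $0\le k<g$, hence $\gr^r_!(\Psi_{\IC,\pi_v})=0$ for $r$ not divisible by $g$.

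For $r=tg$, I would next identify $j^{\geq tg,*}\gr^{tg}_!(\Psi_{\IC,\pi_v})$. Since $\Fil^{tg-1}_!(\Psi_{\IC,\pi_v})$ is, by construction, the $\FC$-image of a morphism coming from the open $X^{1\leq tg-1}_{\IC,\bar s}$, and since by the previous step no new material appears on the intermediate strata $X^{=r}$ with $tg-g< r< tg$, one shows $\Fil^{tg-1}_!(\Psi_{\IC,\pi_v})$ is supported on $X^{\geq tg+1}_{\IC,\bar s}$. Therefore $j^{\geq tg,*}\gr^{tg}_!(\Psi_{\IC,\pi_v})\simeq j^{\geq tg,*}\Psi_{\IC,\pi_v}$, and the Harris-Taylor description of nearby cycles at $X^{=tg}$ (cf. \cite{h-t} and the proposition \ref{prop-fbartau} on the integral level) identifies this with $\widetilde{HT}(\pi_v,\st_t(\pi_v))\otimes\LF(\pi_v)(\tfrac{1-t}{2})[d-tg]$, which up to the multiplicity $e_{\pi_v}$ is $j^{\geq tg,*}\PC(t,\pi_v)(\tfrac{1-t}{2})$.

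The main obstacle is the Grothendieck-group computation of $\gr^{tg}_!(\Psi_{\IC,\pi_v})$ itself. By the identification above, the adjunction morphism $j^{=tg}_!j^{=tg,*}\gr^{tg}_!(\Psi_{\IC,\pi_v})\twoheadrightarrow\gr^{tg}_!(\Psi_{\IC,\pi_v})$ has source (up to $e_{\pi_v}$) equal to $j^{=tg}_! HT(\pi_v,\st_t(\pi_v))\otimes\LF(\pi_v)(\tfrac{1-t}{2})$, whose class in the Grothendieck group is given by the formula (\ref{eq-extzero}) as $\sum_{r=0}^{s-t}[\PC(t+r,\pi_v)(\tfrac{1-t+r}{2})]$. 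To recover the claimed class $\sum_{i=t}^{s}[\PC(i,\pi_v)(\tfrac{1+i-2t}{2})]$ of $\gr^{tg}_!(\Psi_{\IC,\pi_v})$, one must check that the kernel of the adjunction map is zero, i.e.\ that every constituent $\PC(t+r,\pi_v)$ with its specific Tate twist genuinely shows up in $\gr^{tg}_!$ and is not eaten by a lower graded piece. This is the delicate step: it is done by matching the stratification filtration against the monodromy-weight filtration of $\Psi_{\IC,\pi_v}$ from \cite{boyer-invent2} Theorem 2.2.4, whose graded pieces are the $\PC(u,\pi_v)(\tfrac{1-u+2j}{2})$ with $1\le u\le s$ and $0\le j\le u-1$. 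Comparing the supports of the Harris-Taylor constituents (the constituent $\PC(i,\pi_v)$ is supported on $X^{\geq ig}$, so its minimal stratum is $ig$) with the indices of the stratification filtration, one identifies the subset of constituents that must appear in $\gr^{tg}_!$, and the matching of twists $(\tfrac{1+i-2t}{2})$ is forced by the compatibility with weights, yielding the claimed formula.
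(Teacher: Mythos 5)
This result is cited from \cite{boyer-torsion} 3.4.3; the paper itself provides no proof, only a recollection. Your attempt to reprove it contains two substantial gaps.

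First, the assertion that $\Fil^{tg-1}_!(\Psi_{\IC,\pi_v})$ is supported on $X^{\geq tg+1}_{\IC,\bar s}$ cannot be correct: by construction $\Fil^{tg-1}_!\supseteq\Fil^{g}_!\supseteq\gr^{g}_!(\Psi_{\IC,\pi_v})$, which is supported on the strictly larger $X^{\geq g}_{\IC,\bar s}$. The statement you actually need is that the map $j^{\geq tg,*}\Fil^{tg-1}_!\longrightarrow j^{\geq tg,*}\Fil^{tg}_!\simeq j^{\geq tg,*}\Psi_{\IC,\pi_v}$ vanishes in perverse degree $0$, so that $j^{\geq tg,*}\gr^{tg}_!$ picks up the whole Harris--Taylor local system. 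This does not follow from the vanishing on the strata $X^{=r}$ with $g\nmid r$; it requires controlling how the lower filtration pieces (whose constituents $\PC(i,\pi_v)$, $i<t$, are supported on all of $X^{\geq ig}$) interact with $X^{=tg}$, and that is a genuine step you have skipped.

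Second, the Grothendieck-group computation misreads $(\ref{eq-extzero})$. For $\Pi_t=\st_t(\pi_v)$ and $r\geq 1$, the representation $\st_t(\pi_v)\overrightarrow{\times}\st_r(\pi_v)$ is reducible: its two Zelevinsky segments are adjacent, hence linked, so it has two irreducible constituents, only one of which is $\st_{t+r}(\pi_v)$. Consequently the class of the source $j^{=tg}_!HT(\pi_v,\st_t(\pi_v))\otimes\Lm(\pi_v)(\tfrac{1-t}{2})$ is \emph{strictly larger} than $\sum_{i=t}^{s}[\PC(i,\pi_v)(\tfrac{1+i-2t}{2})]$; your claim that they coincide, and hence that "the kernel of the adjunction map is zero," is false. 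The content of the proposition is precisely to show that the image of the adjunction retains exactly the $\st_{t+r}$-type constituents with the indicated twists while the complementary (Langlands-quotient) constituents land in the kernel; waving at "compatibility with weights" does not settle this. Both gaps need real arguments (a comparison with the monodromy filtration and a weight/support bookkeeping of the kind carried out in \cite{boyer-torsion}) before the proof stands.
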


\rem dans \cite{boyer-torsion}, on explique comment ce rÈsultat permet de calculer les fibres
des faisceaux de cohomologie de $\Psi_{\IC,\pi_v}$, i.e. d'en dÈduire le corollaire 2.2.10 de 
\cite{boyer-invent2}. En particulier la fibre en un point fermÈ de $X^{=tg}_{\IC,\bar s}$ de
$\hi^{h-d} \Psi_{\IC,\pi_v}$ est munie d'une action de 
$(D_{v,tg}^\times)^0:=\ker \bigl ( \val \circ \rn: D_{v,tg}^\times \longrightarrow \Zm \bigr )$
et de $\varpi_v^\Zm$ que l'on voit plongÈ dans $F_v^\times \subset D_{v,tg}^\times$. 
D'aprËs le thÈorËme 2.2.6 de \cite{boyer-invent2}, ou plus simplement d'aprËs la proposition prÈcÈdente,
on a
\begin{eqnarray} 
\ind_{(D_{v,tg}^\times)^0 \varpi_v^\Zm}^{D_{v,tg}^\times} 
\bigl ( \hi^{tg-d} \Psi_{\IC,\pi_v} \bigr )_{|X^{=tg}_{\IC,\bar s}} &
\simeq & \hi^{tg-d} P(t,\pi_v)(\frac{1-t}{2}) \nonumber  \\
&\simeq &  \widetilde{HT}(\pi_v,\st_t(\pi_v)) \otimes \Lm(\pi_v)  \otimes \Xi^{\frac{1-t}{2}}. \label{eq-psi-dh}
\end{eqnarray}

\begin{nota} \label{nota-grmoins}
On note
$$0 \rightarrow \gr^{tg,-}_!(\Psi_{\IC,\pi_v}) \longrightarrow \gr^{tg}_!(\Psi_{\IC,\pi_v})
\longrightarrow \gr^{tg,+}_!(\Psi_{\IC,\pi_v}) \rightarrow 0,$$
avec $\gr^{tg,+}_!(\Psi_{\IC,\pi_v}):= \PC(t,\pi_v)(\frac{1-t}{2})$.
\end{nota}

\subsection{ComplÈments}
\label{para-comple}

Les rÈsultats de ce paragraphe sont dÈveloppÈs plus en dÈtail dans \cite{boyer-FT}. 
L'article n'Ètant pas encore publiÈ, nous donnons ci-aprËs un rapide aperÁu du \S 6 de loc. cit.

Etant donnÈe une inclusion de strates pures 
$X^{\geq h+1}_{\IC,\bar s,c} \hookrightarrow X^{\geq h}_{\IC,\bar s,a}$,
l'ingrÈdient essentiel et nouveau par rapport ‡ \cite{boyer-invent2} est l'utilisation des morphismes
$$j^{\geq h}_{a-c }: X^{\geq h}_{\IC,\bar s,a} - X^{\geq h+1}_{\IC,\bar s,c} \hookrightarrow
X^{\geq h}_{\IC,\bar s,a}$$
que l'on notera simplement $j^{\geq h}_{\neq c}$ lorsque $a=\overline{1_h}$. On s'intÈressera plus
particuliËrement, pour $P$ un $\overline \Qm_l$-faisceau pervers
supportÈ dans $X^{\geq h}_{\IC,\bar s,a}$, 
aux morphismes d'adjonction $j^{=h}_{a-c,!} j^{=h,*}_{a-c} P \longrightarrow P$. Dans les applications
$P$ sera en fait ‡ support dans $X^{\geq h+g}_{\IC,\bar s,a}$, pour $g \geq 1$, de sorte que
$$j^{=h}_{a- c,!} j^{=h,*}_{a- c} P = j^{=h+g}_{a-c,!} j^{=h+g,*}_{a-c} P,$$
o˘ $j^{=h+g}_{a-c}=i^{h+g} \circ j^{\geq h+g}_{a-c}$ avec
$$j^{\geq h+g}_{a-c}: X^{\geq h+g}_{\IC,\bar s,a}-X^{\geq h+g}_{\IC,\bar s,c} \hookrightarrow
X^{\geq h+g}_{\IC,\bar s,a}.$$

\rem on autorise $h=0$ auquel cas $X^{\geq 0}_{\IC,\bar s,a}$ est la variÈtÈ de Shimura 
$X_{\IC} \rightarrow \spec \OC_v$.

L'intÈrÍt principal des inclusions $j^{\geq h}_{a-c}$ est son caractËre affine rappelÈ dans le lemme suivant.

\begin{lemm} \label{lem-jaffine} \phantomsection
L'inclusion ouverte $j^{\geq h}_{a-c}$ est affine.
\end{lemm}

\begin{proof} Par symÈtrie il suffit de traiter le cas de $a=\overline{1_h}$ et
$c=\overline{1_{h+1}}$. Or
pour tout $\spec A \longrightarrow X^{\geq h}_{\IC,\bar s,\overline{1_h}}$, le fermÈ 
$X^{\geq h+1}_{\IC,\bar s,\overline{1_{h+1}}} \times_{X^{\geq h}_{\IC,\bar s,\overline{1_h}}} 
\spec A$ est, avec les notations du \S \ref{para-shimura}, donnÈ par l'annulation de $\iota(e_{h+1})$.
\end{proof}

En utilisant la description des faisceaux de cohomologie des faisceaux pervers d'Harris-Taylor et
du complexe des cycles Èvanescents, donnÈe dans \cite{boyer-invent2} et rappelÈe plus haut,
nous allons donner quelques rÈsultats sur l'effet du foncteur exact $j^{=h}_{\neq c,!} j^{=h,*}_{\neq c}$
sur les faisceaux pervers d'Harris-Taylor et le complexe des cycles Èvanescents. 
On commence par $h=0$ et un faisceau pervers d'Harris-Taylor $HT(\pi_v,\st_t(\pi_v))$
‡ support dans $X^{\geq tg}_{\IC,\bar s}$ de sorte que pour toute strate pure $X^{\geq 1}_{\IC,\bar s,c}$,
$$j^{=1}_{\neq c,!} j^{=1,*}_{\neq c} HT(\pi_v,\st_t(\pi_v)) \simeq \cdots \simeq
j^{=tg}_{\neq c,!} j^{=tg,*}_{\neq c} HT(\pi_v,\st_t(\pi_v)).$$
Pour simplifier les notations on prendra $c=\overline{1_1}$.

\begin{lemm} (cf. le corollaire 6.6 de \cite{boyer-FT}) \label{lem-j-c}
Etant donnÈ un systËme local d'Harris-Taylor $HT(\pi_v,\st_t(\pi_v))$ relativement ‡ une reprÈsentation irrÈductible cuspidale $\pi_v$ de $Gl_g(F_v)$, on a la suite exacte courte
\begin{multline*}
0 \rightarrow \Ind_{P_{1,(t+1)g-1,d-(t+1)g}(F_v)}^{P_{1,d-1}(F_v)}
\lexp p j^{=(t+1)g}_{\overline{1_{(t+1)g}},!*} HT_{\overline{1_{(t+1)g}}}
\bigr (\pi_v,\st_{t+1}(\pi_v)_{P_{1,(t+1)g-1}(F_v)} \bigl ) \otimes \Xi^{\frac{1}{2}} \\
\longrightarrow j^{=1}_{\neq \overline{1_1},!}j^{=1,*}_{\neq \overline{1_1}} 
\bigl ( \lexp p j^{=tg}_{!*} HT(\pi_v,\st_t(\pi_v))
\bigr ) \longrightarrow \lexp p  j^{=1}_{\neq \overline{1_1},!*}j^{=1,*}_{\neq \overline{1_1}} 
HT(\pi_v,\st_t(\pi_v)) \rightarrow 0.
\end{multline*}
\end{lemm}

\begin{proof}
Il s'agit simplement d'identifier $\lexp p \hi^{-1} i^{1,*}_1 (  \lexp p j^{=tg}_{!*} 
HT(\pi_v,\st_t(\pi_v)) )$ avec le premier terme de la suite exacte courte de l'ÈnoncÈ.
La technique est commune avec les
preuves des lemmes qui suivront: on construit tout d'abord une surjection de
$\lexp p \hi^{-1} i^{1,*}_{\overline{1_1}} (  \lexp p j^{=tg}_{!*} HT(\pi_v,\st_t(\pi_v)) )$ vers le faisceau 
pervers de l'ÈnoncÈ
et on vÈrifie dans un deuxiËme temps que les faisceaux de cohomologie de
ces deux faisceaux pervers possËdent les mÍmes germes en tout point gÈomÈtrique.
Pour le premier point notons tout d'abord que
$$\lexp p \hi^{-1} i^{1,*}_{\overline{1_1}} (  \lexp p j^{=tg}_{!*} HT(\pi_v,\st_t(\pi_v)) ) \simeq \lexp p \hi^{-1}
 i^{1,*}_{\overline{1_1}} (  \lexp p j^{=tg}_{\neq \overline{1_1},!*} HT(\pi_v,\st_t(\pi_v)) ).$$
Dans \cite{boyer-invent2} 4.5.1, on dÈcrit $j^{=tg}_{\overline{1_{tg}},!} 
HT_{\overline{1_{tg}}} (\pi_v,\st_t(\pi_v))$ dans le groupe de Grothendieck des faisceaux pervers
Hecke Èquivariant. En appliquant alors le foncteur $\lexp p \hi^{-1} i^{tg+1,*}_{\overline{1_1}}$
‡ la filtration par les poids de $j^{=tg}_{\overline{1_{tg}},!} HT_{\overline{1_{tg}}} (\pi_v,\st_t(\pi_v))$,
on obtient
\begin{multline*} 
\lexp p \hi^{-1} i^{tg+1,*}_{\overline{1_1}} \bigl( \lexp p j^{=tg}_{\overline{1_{tg}},!*} HT_{\overline{1_{tg}}}
(\pi_v,\st_t(\pi_v)) \bigr )  \twoheadrightarrow \\ 
\Ind_{P_{1,tg-1,g,d-(t+1)g}(F_v)}^{P_{1,tg-1,d-tg}(F_v)}
\lexp p j^{=(t+1)g}_{\overline1_{(t+1)g},!*} HT_{\overline{1_{(t+1)g}}} \Bigl ( \pi_v,
\bigl (\st_t(\pi_v \{ \frac{-1}{2} \} )  \bigr )_{|P_{1,tg-1}(F_v)} 
\otimes  \pi_v \{ \frac{t}{2} \} \Bigr ) \otimes \Xi^{\frac{1}{2}}
\end{multline*}
et donc 
\begin{multline*} 
\lexp p \hi^{-1} i^{tg+1,*}_{\overline{1_1}} \bigl( \lexp p j^{=tg}_{!*} HT (\pi_v,\st_t(\pi_v)) \bigr )  
\twoheadrightarrow \\  \Ind_{P_{1,(t+1)g-1,d-(t+1)g}(F_v)}^{P_{1,d-1}(F_v)}
\lexp p j^{=(t+1)g}_{\overline1_{(t+1)g},!*} HT_{\overline{1_{(t+1)g}}} \Bigl ( \pi_v,
\bigl (\st_t(\pi_v \{ \frac{-1}{2} \} )  \bigr )_{|P_{1,tg-1}(F_v)} 
\times  \pi_v \{ \frac{t}{2} \} \Bigr ) \otimes \Xi^{\frac{1}{2}}
\end{multline*}
o˘ d'aprËs le lemme \ref{lem-mirabolique}, 
$$\bigl ( \st_t(\pi_v \{ \frac{-1}{2} \} )  \bigr )_{|P_{1,tg-1}(F_v)} \times \pi_v \{ \frac{t}{2} \} 
\simeq \st_{t+1}(\pi_v)_{|P_{1,(t+1)g-1}(F_v)}.$$ 
Ainsi on obtient bien une surjection de $ \lexp p \hi^{-1} i^{1,*}_{\overline{1_1}} (  
\lexp p j^{=tg}_{\neq \overline{1_1},!*} HT(\pi_v,\st_t(\pi_v)) )$ vers le faisceau pervers induit de l'ÈnoncÈ.

Passons ‡ prÈsent au deuxiËme point. Pour tout point gÈomÈtrique $z$ de 
$X^{=h}_{\IC,\bar s,\overline{1_1}}$, le
germe en $z$ du $i$-Ëme faisceau de cohomologie de 
$\lexp p \hi^{-1} i^{1,*}_1 (  \lexp p j^{=tg}_{!*} HT(\pi_v,\st_t(\pi_v)) )$ est isomorphe ‡ celui du $(i-1)$-Ëme
de $\lexp p j^{=tg}_{!*} HT(\pi_v,\st_t(\pi_v))$. Ainsi d'aprËs \cite{boyer-invent2}, ce germe est
\begin{itemize}
\item nul si $(h,i)$ n'est pas de la forme $\bigl ( (t+\delta)g, (t+\delta)g-d-\delta \bigr )$ avec
$(t+\delta)g \leq d$,

\item et sinon il est isomorphe au germe en $z$ de $HT(\pi_v,\Pi)$ o˘ $\Pi$ est l'induite normalisÈe 
$$\Pi:=\bigl ( \st_t(\pi_v \{ -\frac{\delta}{2} \} ) \bigr )_{|P_{1,tg-1}(F_v)} \times \speh_\delta 
(\pi_v \{ \frac{t}{2} \} ) \simeq LT(t,\delta-1,\pi_v)_{|P_{1,(t+\delta)g-1}(F_v)},$$ 
le dernier isomorphisme Ètant donnÈ par le lemme \ref{lem-mirabolique}.
\end{itemize}
On calcule de mÍme le germe en $z$ du $i$-Ëme faisceau de cohomologie du premier faisceau pervers
de la suite exacte courte de l'ÈnoncÈ. Les conditions d'annulation sont les mÍmes et sinon on trouve
le germe en $z$ de $HT(\pi_v,\Pi')$ avec
$$\Pi':=\st_{t+1}(\pi_v)_{|P_{1,(t+1)g-1}(F_v)}  \{ \frac{1-\delta}{2} \} \times \speh_{\delta-1} 
(\pi_v \{ \frac{t+1}{2} \})$$ 
qui d'aprËs le lemme \ref{lem-mirabolique} est isomorphe ‡ 
$\Pi \simeq LT_{\pi_v}(t,\delta-1)_{|P_{1,(t+\delta)g-1}(F_v)}$.

\end{proof}

ConsidÈrons ‡ prÈsent $h \geq 1$ et $a=\overline{1_h}$. Pour $\LC[d-h]=HT_{\overline{1_h}}(\pi_v,\Pi_t)$
un systËme local d'Harris-Taylor relativement ‡ une reprÈsentation irrÈductible cuspidale $\pi_v$
de $GL_g(F_v)$ et $\Pi_t$ une reprÈsentation de $GL_{tg}(F_v)$ avec $h=tg$, 
ne jouant aucun rÙle, on notera
$$P_\LC:=i^{h+1}_* \lexp p \hi^{-1} i^{h+1,*}_{\overline{1_h}} \bigl ( \lexp p j^{=h}_{\overline{1_h},!*}
HT_{\overline{1_h}}(\pi_v,\Pi_t) \bigr ),$$ 
de sorte que
$$0 \rightarrow P_\LC \longrightarrow j^{=h}_{\overline{1_h},!} HT_{\overline{1_h}}(\pi_v,\Pi_t)
\longrightarrow \lexp p j^{=h}_{\overline{1_h},!*} HT_{\overline{1_h}}(\pi_v,\Pi_t) \rightarrow 0.$$

\begin{lemm} \label{lem-Ql1}
On a une suite exacte courte de faisceaux pervers
$P_{\Delta(c)}(F_v)$-Èquivariants
$$0 \rightarrow j^{=h+1}_{\neq c,!} j^{=h+1,*}_{\neq c } P_\LC \longrightarrow P_\LC 
\longrightarrow \lexp p j^{=h+g}_{c,!*} j^{=h+g,*}_c P_\LC\ \rightarrow 0.$$
\end{lemm}

\rem rappelons que dans \cite{boyer-invent2}, on calcule dans un premier temps l'image de
$j^{=h}_{\overline{1_h},!}HT_{\overline{1_h}}(\pi_v,\Pi_t)$ dans un groupe de Grothendieck
de faisceaux pervers Èquivariants au moyen de la formule des traces calculant la somme alternÈe
de la cohomologie ‡ support compact de $HT_{\overline{1_h}}(\pi_v,\Pi_t)$.
Moralement donc la suite exacte de l'ÈnoncÈ est ‡ portÈe de la formule des traces puisque
\begin{itemize}
\item d'une part l'injectivitÈ du morphisme d'adjonction 
$j^{=h+1}_{\neq c,!} j^{h+1,*}_{\neq c } P_\LC \longrightarrow P_\LC$
est formelle et dÈcoule du caractËre affine de $j^{h\leq +1}_{\overline{1_h},\neq c}$,

\item et que l'image dans le groupe de Grothendieck du quotient se dÈduit par rÈcurrence de 
celle de $P_\LC$ et de $j^{=h+1}_{\neq c,!} j^{h+1,*}_{\neq c } P_\LC$.
\end{itemize}

\begin{proof}
En ce qui concerne le fait que le morphisme d'adjonction 
$$j^{=h+1}_{\neq c,!} j^{=h+1,*}_{\neq c } P_\LC \longrightarrow P_\LC$$ 
soit injectif, on renvoie le
lecteur au lemme \ref{lem-HT1} qui traite ce point plus gÈnÈralement sur $\overline \Zm_l$.
Le quotient de ce morphisme d'adjonction est 
$\lexp p \hi^0 i_c^{h+1,*} P_\LC$ et il s'agit donc de montrer qu'il est isomorphe ‡
$\lexp p j^{=h+g}_{c,!*} j^{=h+g,*}_c P_\LC$. On suit la stratÈgie dÈtaillÈe dans la preuve du lemme 
prÈcÈdent: le morphisme d'adjonction 
$j^{=h+g}_{c,!} j^{=h+g,*}_c (\lexp p \hi^0 i_c^{h+1,*} P_\LC) \longrightarrow 
(\lexp p \hi^0 i_c^{h+1,*} P_\LC)$ est surjectif et fournit donc une surjection
$$(\lexp p \hi^0 i_c^{h+1,*} P_\LC) \twoheadrightarrow \lexp p j^{=h+g}_{c,!*} j^{=h+g,*}_c P_\LC.$$
On est, comme prÈcÈdemment, ramenÈ ‡ montrer que les faisceaux de cohomologie de 
ces deux faisceaux pervers possËdent les mÍmes germes en tout point gÈomÈtrique de 
$X^{\geq h+g}_{\IC,\bar s,c}$. Par symÈtrie du problËme et afin de simplifier les notations
on suppose que $c=\overline{1_{h+1}}$ et que $z$ est un point gÈomÈtrique de
$X^{=h+\delta}_{\IC,\bar s,\overline{1_{h+\delta}}}$. Notons que le germe en $z$ du $i$-Ëme faisceau
de cohomologie de $\lexp p \hi^0 i_c^{h+1,*} P_\LC$ est isomorphe ‡ celui
de $P_\LC$ lequel, d'aprËs la suite exacte courte
$$0 \rightarrow P_\LC \longrightarrow j^{=h}_{!} \LC[d-h] \longrightarrow 
\lexp p j^{=h}_{!*} \LC[d-h] \rightarrow 0,$$
est isomorphe ‡ celui du $(i-1)$-Ëme faisceau de cohomologie de $\lexp p j^{=h}_{!} \LC[d-h]$.
D'aprËs \cite{boyer-invent2}, le germe en $z$
de $\hi^i \bigl ( \lexp p j^{=h}_{!} \LC[d-h] \bigr )$ est 
non nul si et seulement si $\delta$ est de la forme $rg$ avec $h+rg \leq d$ et $i=h-d+r(g-1)$ auquel cas
il est isomorphe, en tant que $P_{\Delta(1_{\overline{h+1}} \subset \overline{1_{h+rg}})}(F_v)$ au germe
en $z$ de 
$$HT_{\overline{1_{h+rg}}} \Bigl ( \pi_v, \Pi_t \{ \frac{-r}{2} \} \otimes \bigl ( \speh_r(\pi_v \{ \frac{t}{2} \} 
\bigr )_{|P_{1,rg-1}(F_v)} \bigr ) \Bigr ) [rg+h-d].$$

En ce qui concerne le germe en $z$ de $\lexp p j^{=h+g}_{c,!*} j^{=h+g,*}_c P_\LC$, 
notons tout d'abord que 
$$j^{=h+g,*}_{\overline{1_{h+1}}} P_\LC \simeq 
\Ind_{P_{\Delta(\overline{1_{h+1}} \subset \overline{1_{h+g}})}(F_v)}^{P_{\Delta(\overline{1_{h+1}}}(F_v)}
\Bigl ( j^{=h+g,*}_{\overline{1_{h+g}}} P_\LC \Bigr ),$$
et que donc, d'aprËs \cite{boyer-invent2}, le germe en question est isomorphe ‡ celui de
$$HT_{\overline{1_{h+rg}}} \Bigl ( \pi_v, \Pi_t \{ \frac{-r}{2} \} \otimes \bigl (
( \pi_v \{ \frac{t-r+1}{2} \} )_{|P_{1,g-1}(F_v)} \times \speh_{r-1}(\pi_v \{ \frac{t+1}{2} \} ) \bigr ) \Bigr ) 
[rg+h-d].$$
Le rÈsultat dÈcoule alors du fait, cf. \ref{lem-mirabolique}, que $\bigl ( \speh_r (\pi_v) \bigr)_{|P_{1,rg-1}(F_v)}$
est irrÈductible isomorphe ‡ 
$( \pi_v \{ \frac{-r+1}{2} \} )_{|P_{1,g-1}(F_v)} \times \speh_{r-1}(\pi_v \{ \frac{1}{2} \} ) \bigr )$.

\end{proof} 

Terminons par l'effet de $\bar j_{\neq a,!} \bar j^{*}_{\neq a}$ sur le complexe des cycles 
Èvanescents. Rappelons qu'en notant
$$\bar j: X_{\IC,\bar \eta} \hookrightarrow X_{\IC} \hookleftarrow X_{\IC,\bar s}: \bar i,$$
o˘ $\bar j_!=\lexp p {\bar j_!}$ et  $\bar j_*=\lexp p {\bar j_*}$ pour la $t$-structure dont la construction
est rappelÈe au dÈbut du \S \ref{para-ext-pp-preuve}, le complexe des
cycles Èvanescents est $\lexp p \hi^{-1} \bar i^* \bar j_* \bar \Qm_l$. Ainsi en utilisant que
pour toute strate pure $X^{\geq 1}_{\IC,\bar s,a}$, l'inclusion
$\bar j_{\neq a}:\overline X_{\IC}-X^{\geq 1}_{\IC,\bar s,a} \hookrightarrow
\overline X_{\IC}:=X_\IC \times_{\spec \OC_v} \spec \overline \OC_v$ 
Ètant affine, le lemme \ref{lem-HT1} nous donne que
$i^{1,*}_a \Psi_\IC$ est pervers et libre, ce qui donne la suite exacte courte
de $\FC(X_{\IC,\bar s},\overline \Zm_l)$
$$0 \rightarrow \bar j_{\neq a,!} \bar j^{*}_{\neq a} \Psi_\IC \longrightarrow \Psi_\IC
\longrightarrow i^1_{a,*} \lexp p \hi^0 i^{1,*}_a \Psi_\IC \rightarrow 0.$$
Rappelons, cf. \cite{boyer-invent2}, que $\Psi_{\IC,\overline \Qm_l}$ se dÈcompose en une somme
directe
$$\Psi_{\IC,\overline \Qm_l} \simeq \bigoplus_\pi \Psi_{\IC,\overline \Qm_l,\pi}$$
o˘ $\pi$ dÈcrit les classes d'Èquivalence inertielle des reprÈsentations irrÈductibles cuspidales
de $GL_g(F_v)$ pour $1 \leq g \leq d$.

\begin{prop} \label{prop-QL-Psi}
Avec les notations prÈcÈdentes, et pour $\pi$ irrÈductible cuspidale de $GL_g(F_v)$, le faisceau pervers
$ i^1_{a,*}\lexp p \hi^0 i^{1,*}_a \Psi_{\IC,\overline \Qm_l,\pi}$ admet une filtration
$$\Fil^1_a(\Psi_{\IC,\overline \Qm_l,\pi}) \subset \Fil^2_a(\Psi_{\IC,\overline \Qm_l,\pi}) \subset
\cdots \subset \Fil^s_a(\Psi_{\IC,\overline \Qm_l,\pi}),$$
o˘ $s=\lfloor \frac{d}{g} \rfloor$ avec pour graduÈs
$$\gr^k_a(\Psi_{\IC,\overline \Qm_l,\pi}) \simeq \lexp p j^{=kg}_{a,!*} HT_a(\pi_v,\st_k(\pi_v))
\otimes \Xi^{\frac{1-k}{2}}.$$
\end{prop}

\rem comme dans la remarque suivant le lemme \ref{lem-Ql1}, ce rÈsultat est ‡ la portÈe de
la formule des traces et permet de simplifier les arguments les plus complexes de
\cite{boyer-invent2}.

\begin{proof}
Par symÈtrie il suffit de considÈrer le cas $a=\overline{1_1}$.
On rappelle que d'aprËs \cite{boyer-invent2}, l'image $I_\pi$ du morphisme d'adjonction 
$$\Psi_{\IC,\overline \Qm_l,\pi} \longrightarrow j^{=g}_* j^{=g,*} \Psi_{\IC,\overline \Qm_l,\pi}$$ 
admet une filtration $\Fil^1(I_\pi) \subset \cdots \subset \Fil^s(I_\pi)$ dont les graduÈs sont
$\gr^k(I_\pi) \simeq  \lexp p j^{=kg}_{!*} HT(\pi_v,\st_k(\pi_v)) \otimes \Xi^{\frac{1-k}{2}}$.

\begin{lemm} \label{lem-conoyau1}
Pour tout $1 \leq r \leq s$, le faisceau pervers $i^{1}_{\overline{1_1},*} \lexp p \hi^0 i^{1,*}_{\overline{1_1}} 
\Fil^r(I_\pi)$ admet une filtration similaire ‡ celle de l'ÈnoncÈ de la proposition prÈcÈdente, i.e. dont les
graduÈs sont les $\lexp p j^{=kg}_{\overline{1_1},!*} HT_{\overline{1_1}}(\pi_v,\st_k(\pi_v))
\otimes \Xi^{-\frac{k}{2}}$ pour $1†\leq k \leq r$.
\end{lemm}

\begin{proof}
On raisonne par rÈcurrence sur $r$ de $1$ ‡ $s$. Comme il est clair que
$$i^{1}_{\overline{1_1},*} \lexp p \hi^0 i^{1,*}_{\overline{1_1}} \bigl (  \lexp p j^{=rg}_{!*} 
HT(\pi_v,\st_r(\pi_v)) \bigr ) 
\simeq \lexp p j^{=rg}_{\overline{1_1},!*} HT_{\overline{1_1}}(\pi_v,\st_r(\pi_v)),$$
il suffit de vÈrifier que
\addtocounter{smfthm}{1}
\begin{equation} \label{eq-fleche1}
i^{1}_{\overline{1_1},*} \lexp p \hi^{-1} i^{1,*}_{\overline{1_1}}  \bigl (  \lexp p j^{=rg}_{!*} 
HT(\pi_v,\st_r(\pi_v))\otimes \Xi^{\frac{1-r}{2}} \bigr ) 
\longrightarrow \lexp p \hi^{0} i^{1,*}_{\overline{1_1}}  \bigl ( \Fil^{r-1}(I_\pi) \bigr )
\end{equation}
est nulle. Or d'aprËs l'hypothËse de rÈcurrence tous les constituants irrÈductibles de
$\lexp p \hi^{0} i^{1,*}_{\overline{1_1}}  \bigl ( \Fil^{r-1}(I_\pi) \bigr )$ sont des extensions intermÈdiaires
de systËmes locaux sur des strates $X^{=kg}_{\IC,\bar s}$ pour $1 \leq k \leq r-1$ alors que
$\lexp p \hi^{-1} i^{1,*}_{\overline{1_1}}  \bigl (  \lexp p j^{=rg}_{!*} HT(\pi_v,\st_r(\pi_v)) \otimes
\Xi^{\frac{1-r}{2}} \bigr )$
est ‡ support dans $X^{\geq (r+1)g}_{\IC,\bar s}$ de sorte que (\ref{eq-fleche1}) est nÈcessairement nulle.

\end{proof}

On a ainsi une surjection de $i^{1}_{\overline{1_1},*}
\lexp p \hi^0 i^{1,*}_{\overline{1_1}} \Psi_{\IC,\overline \Qm_l,\pi}$
vers le $\Fil^s_{\overline{1_1}}(\Psi_{\IC,\overline \Qm_l,\pi})$ de l'ÈnoncÈ
et comme dans la preuve du lemme \ref{lem-Ql1} il suffit de montrer que ces deux faisceaux pervers
ont les mÍmes germes en tout point gÈomÈtrique de $X^{\geq 1}_{\IC,\bar s,\overline{1_1}}$.
ConsidÈrons donc $z$ un point gÈomÈtrique de 
$X^{=h}_{\IC,\bar s,\overline{1_h}}$. D'aprËs \cite{boyer-invent2}, la fibre en $z$ du $i$-Ëme faisceau
de cohomologie $\hi^i j^{=kg}_{\overline{1_{kg}},*} HT_{\overline{1_{kg}}}(\pi_v,\st_k(\pi_v))
\otimes \Xi^{\frac{1-k}{2}}$ est nulle si $(h,i)$ n'est pas de la forme $(tg-d,tg-d+k-t)$ avec 
$k \leq t \leq \lfloor \frac{d}{g} \rfloor$ et sinon est isomorphe ‡ celle de 
$$HT_{\overline{1_{tg}}} \bigl ( \pi_v,\st_k(\pi_v\{ \frac{k-t}{2} \}) \otimes 
\speh_{t-k} ( \pi_v \{ \frac{k}{2} \} ) \bigr ) \otimes \Xi^{\frac{1+t-2k}{2}}.$$
On en dÈduit alors que la fibre en $z$ de $j^{=kg}_{\overline{1_1},*} HT_{\overline{1_1}}
(\pi_v,\st_k(\pi_v)) \otimes \Xi^{\frac{1-k}{2}}$  est isomorphe ‡ celle de 
$$HT_{\overline{1_{tg}}} \Bigl ( \pi_v, \bigl ( \st_k(\pi_v\{ \frac{k-t}{2} \} \bigr )_{|P_{1,kg-1}(F_v)} \times 
\speh_{t-k} ( \pi_v \{ \frac{k}{2} \} ) \bigr ) \Bigr ) \otimes \Xi^{\frac{1+t-2k}{2}},$$
o˘ on induit de $P_{1,kg-1}(F_v) \otimes GL_{(t-k)g}(F_v)$ ‡ $P_{1,tg-1}(F_v)$. Par ailleurs, en regardant
les poids, on voit que la suite spectrale calculant la fibre des faisceaux de cohomologie de
$\Fil^s_{\overline{1_1}}(\Psi_{\IC,\overline \Qm_l,\pi})$ en fonction de ceux des 
$\gr^k_{\overline{1_1}}(\Psi_{\IC,\overline \Qm_l,\pi})$ dÈgÈnËre en $E_1$.
D'aprËs \ref{lem-mirabolique}, 
$$\bigl ( \st_k(\pi_v\{ \frac{k-t}{2} \}) \bigr )_{|P_{1,kg-1}(F_v)} \times \speh_{t-k} ( \pi_v \{ \frac{k}{2} \} )
\simeq \bigl ( LT_\pi(k,t-1-k)_{\pi_v} \bigr )_{|P_{1,tg-1}(F_v)}$$
de sorte que d'aprËs le rÈsultat principal de \cite{boyer-invent2}, la fibre en $z$ de
$\hi^i \bigl ( \Fil^s_{\overline{1_1}}(\Psi_{\IC,\overline \Qm_l,\pi}) \bigr )$ est isomorphe 
‡ celle de $\hi^i \bigl ( \lexp p \hi^0 i_{\overline{1_1}}^{1,*} \Psi_{\IC,\overline \Qm_l,\pi} \bigr )$,
d'o˘ le rÈsultat.

\end{proof}

\rem les mÍmes
arguments que prÈcÈdemment appliquÈs ‡ $\Fil^{rg}_!(\Psi_{\IC,\overline \Qm_l,\pi})$ donnent
$$i^1_{a,*} \lexp p \hi^0 i^{1,*}_a \bigl ( \Fil^{rg}_!(\Psi_{\IC,\overline \Qm_l,\pi}) \bigr ) \simeq 
\Fil^r_a(\Psi_{\IC,\overline \Qm_l,\pi}).$$
On en dÈduit alors que l'image du morphisme d'adjonction
$$j^{=g}_{\overline{1_1},!} j^{=g,*}_{\overline{1_1}} \Bigl ( i^{1}_{\overline{1_1},*}
\lexp p \hi^0 i_{\overline{1_1}}^{1,*}  \Fil^{rg}_! \bigl (\Psi_{\IC,\overline \Qm_l,\pi} \bigr ) \Bigr ) 
\longrightarrow   i^{1}_{\overline{1_1},*}
\lexp p \hi^0 i_{\overline{1_1}}^{1,*}  \Fil^{rg}_! \bigl ( \Psi_{\IC,\overline \Qm_l,\pi} \bigr )$$
est $j^{=g}_{\overline{1_1},!*} HT(\pi_v,\pi_v)(\frac{1}{2})$. Plus gÈnÈralement les applications successives
des morphismes d'adjonction
$$j^{=kg}_{\overline{1_1},!} j^{=kg,*}_{\overline{1_1}} \Bigl ( i^{1}_{\overline{1_1},*}
 \lexp p \hi^0 i_{\overline{1_1}}^{k,*} 
\Fil^{rg}_! \bigl ( \Psi_{\IC,\overline \Qm_l,\pi} \bigr ) \Bigr ) \longrightarrow  i^{1}_{\overline{1_1},*}
\lexp p \hi^0 i_{\overline{1_1}}^{r,*} \Bigl ( \Fil^{rg}_! \bigl (\Psi_{\IC,\overline \Qm_l,\pi} \bigr ) \Bigr )$$
donnent pour images les $\lexp p j^{=kg}_{\overline{1_1},!*} HT_{\overline{1_1}}
(\pi_v,\st_k(\pi_v)) \otimes \Xi^{\frac{1-k}{2}}$ pour $1 \leq k \leq r$.

\rem on notera enfin que de la description des faisceaux de cohomologie des faisceaux pervers
d'Harris-Taylor, la suite spectrale calculant les faisceaux de cohomologie de 
$\lexp p \hi^0 i^{1,*}_{\overline{1_1}} \Psi_{\IC,\overline \Qm_l}$ ‡ partir de ceux des graduÈs
de la proposition \ref{prop-QL-Psi}, dÈgÈnËre en $E_1$.

%
%

%
%
%
%
%
%
%
%

\bibliographystyle{plain}
\bibliography{bib-ok}

\end{document}